\newcommandx{\edit}[2][1=]
    {\todo[linecolor=Blue,backgroundcolor=Blue!10,bordercolor=Blue,#1]{#2}}
\definecolor{hblue}{RGB}{0, 102, 204}
\definecolor{fblue}{RGB}{0, 51, 102}
\definecolor{hcolor}{RGB}{207, 68, 142}
\definecolor{qcolor}{RGB}{255, 138, 220} 
\numberwithin{equation}{section}
\theoremstyle{plain}
\newtheorem{theorem}{Theorem}[section]
\newtheorem{lemma}[theorem]{Lemma}
\newtheorem{assumption}[theorem]{Assumption}
\theoremstyle{remark}
\newtheorem{example}[theorem]{Example}
\newtheorem{remark}[theorem]{Remark}
\renewcommand{\theta}{\vartheta}
\newcommand*\diff{\mathop{}\!\mathrm{d}}
\DeclareMathOperator{\PP}{{\mathbb P}}
\DeclareMathOperator{\R}{{\mathbb R}}
\DeclareMathOperator{\N}{{\mathbb N}}
\DeclarePairedDelimiter\ceil{\lceil}{\rceil}
\newcommand{\sca}[2]{\langle#1,#2\rangle}
\newcommand{\norm}[1]{\lVert#1\rVert}
\numberwithin{theorem}{section}
\numberwithin{figure}{section}
\title{Parameter estimation in hyperbolic linear SPDEs from multiple measurements}
\author{Anton Tiepner, Eric Ziebell}
\date{}
\begin{document}

\maketitle
\begin{abstract}
    \noindent The coefficients of elastic and dissipative operators in a linear hyperbolic SPDE are jointly estimated using multiple spatially localised measurements. As the resolution level of the observations tends to zero, we establish the asymptotic normality of an augmented maximum likelihood estimator. 
    The rate of convergence for the dissipative coefficients matches rates in related parabolic problems, whereas the rate for the elastic parameters also depends on the magnitude of the damping. The analysis of the observed Fisher information matrix relies upon the asymptotic behaviour of rescaled $M, N$-functions generalising the operator cosine and sine families appearing in the undamped wave equation. In contrast to the energetically stable undamped wave equation, the $M, N$-functions emerging within the covariance structure of the local measurements
    have additional smoothing properties similar to the heat kernel, and their asymptotic behaviour is analysed using functional calculus. 
    \medskip

\noindent\textit{MSC 2020 subject classification:} Primary: 60H15; Secondary: 62F12

\smallskip

\noindent\textit{Keywords:} hyperbolic linear SPDEs, second-order stochastic Cauchy problem, parameter estimation, central limit theorem, local measurements
\end{abstract}

\section{Introduction}
We study parameter estimation for a general second-order stochastic Cauchy problem 
\begin{equation}
    \label{eq: genCauchyIntro}
    \ddot{u}(t)=A_\theta u(t)+B_\eta \dot{u}(t)+\dot{W}(t),\quad 0<t\leq T,
\end{equation}
driven by space-time white noise $\dot{W}$ on an open, bounded spatial domain $\Lambda\subset\R^d$. The differential operators $A_\theta$ and $B_\eta$ defined through
\begin{align*}
    A_\theta&=\sum_{i=1}^p\theta_i(-\Delta)^{\alpha_i},\quad \alpha_1>\dots>\alpha_p\geq0,\\\
    B_\eta&=\sum_{j=1}^q\eta_j(-\Delta)^{\beta_j},\quad\beta_1>\dots>\beta_q\geq0,
\end{align*}
are parameterised by unknown constants $\theta\in\R^p,$ $\eta\in\R^q$. In general, such equations model elastic systems, and we refer to $A_\theta$ as the elastic operator while $B_\eta$ is called the dissipation (or damping) operator.\par In the absence of any damping $(B_\eta=0)$ and noise, a prototypical example of \eqref{eq: genCauchyIntro} is the isotropic plate equation (without any in-plane forces, thermal loads or elastic foundation)
\begin{equation}
    \label{eq: detplate}
    \rho h\ddot{u}(t)=-D\Delta^2u(t),
\end{equation}
modelling the bending of elastic plates over time. The parameters governing \eqref{eq: detplate} are the material density $\rho$ and the bending stiffness $D=\frac{h^3E}{12(1-\nu^2)}$. The bending stiffness $D$ depends on the plate thickness $h$, the material-specific Poisson-ratio $\nu$ and Young's modulus $E$. Numerous extensions and applications of such equations can, for instance, be found in \cite{reddy_plate_2006,leissa_plate_1969}. To account for the system's energy loss, damping is added to the equation, where a higher differential order of $B_{\eta}$ describes stronger damping. In fact, a parabolic behaviour of \eqref{eq: genCauchyIntro} is obtained for $\beta_1>0$ due to the smoothing effects within the related $C_0$-semigroup \cite{russell_damping_1982, Favini_conditions_1991}. In closely related situations, i.e. when both $A_\theta$ and $B_\eta$ are negative operators, the $C_0$-semigroup has been shown to become analytic if and only if $2\beta_1\geq\alpha_1$, where a borderline case occurs under equality, cf. \cite{chen_damping_1989}.\par While parameter estimation for SPDEs is well-studied in second-order parabolic equations, e.g. \cite{altmeyer_anisotrop2021, lototsky_statistical_2009, hildebrandt_parameter_2019, tonaki2022parameter, chong_high-frequency_2020,gaudlitz_estimation_2023,pasemann_drift_2020} and the references therein, the literature on higher-order hyperbolic equations is limited. We refer to \cite{ziebell_wave23} and the references mentioned there for studies of the (non-)parametric wave equation. In \cite{aihara_parameter_1991,aihara_identification_1994}, the authors considered a \textit{weakly damped} system, i.e. $\beta_1=0$, and developed a first approach for identifying coefficients of the elastic operator in a Kalman filtering problem based on the methods of sieves. \cite{maslowski_ergodicity_2007} studied equations driven by a fractional cylindrical Brownian motion and derived a consistent estimator of a scalar drift coefficient using the ergodicity of the underlying system. Based on spectral measurements $(\sca{u(t)}{e_j})_{0\leq t\leq T}$, $j=1,\dots, N$, where $(e_j)_{j\geq1}$ forms an orthonormal basis of $L^2(\Lambda)$ composed of eigenvectors for $A_\theta$ and $B_\eta$, \cite{lototsky_multichannel_2010} constructed maximum-likelihood estimators and established the asymptotic normality for diagonalisable hyperbolic equations given that the number $N$ of observed Fourier-modes tends to infinity.

In contrast, our estimator is based on continuous observations of local measurement processes $$u_{\delta,k}=(\sca{u(t)}{K_{\delta,x_k}})_{0\leq t\leq T},\quad u_{\delta,k}^\gamma=(\sca{u(t)}{(-\Delta)^{\gamma}K_{\delta,x_k}})_{0\leq t\leq T},$$ for locations $x_1,\dots, x_N\in\Lambda$ and $\gamma\in\{\alpha_i,\beta_j|1\leq i\leq p;1\leq j\leq q\}$. The \textit{point spread functions} \cite{aspelmeier_modern_2015,backer_extending_2014} $K_{\delta,x_k}$ are compactly supported functions taking non-zero values in an area centred around $x_k$ with radius $\delta$. Local measurements emerge naturally as they describe the physical limitation of measuring $u(t,x_k)$, which, in general, is only possible up to a convolution with a point spread function. Local observations were introduced to the field of statistics for SPDEs in \cite{altmeyer_nonparametric_2020}, where the authors investigated a stochastic heat equation with a spatially varying diffusivity. It was shown that the diffusivity at location $x_k\in\Lambda$ can be estimated based on a single local measurement process at $x_k$ as the resolution level $\delta$ tends to zero. The local observation scheme turned out to be robust under semilinearities \cite{altmeyer_parameterSemi_2020, altmeyer_parameter_2020}, multiplicative noise \cite{janak_2023_multiplicative}, discontinuities \cite{reiß_2023_change,tiepner_change_24} or lower-order perturbation terms \cite{altmeyer_anisotrop2021,strauch_velocity_2023}. In contrast to the estimation of the diffusivity, the identifiability of transport or reaction coefficients necessarily requires an increasing amount $N\rightarrow\infty$ of measurements. In the recent contribution \cite{ziebell_wave23}, the local measurement approach was extended to hyperbolic problems and the non-parametric wave speed in the undamped stochastic wave equation was estimated by relating the observed Fisher information to the energetic behaviour of an associated deterministic wave equation.

Based on the local measurement approach, we construct the \textit{augmented} maximum likelihood estimator (MLE) $(\hat{\theta}_\delta,\hat{\eta}_\delta)^\top\in\R^{p+q}$ and prove the asymptotic normality of 
\begin{equation*}
\label{eq: CLT intro}
    \begin{pmatrix}
        N^{1/2}\delta^{-2\alpha_i+\alpha_1+\beta_1}(\hat{\theta}_{\delta,i}-\theta_i)\\
        N^{1/2}\delta^{-2\beta_j+\beta_1}(\hat{\eta}_{\delta,j}-\eta_j)
    \end{pmatrix}_{i\leq p,j\leq q},\quad\delta\rightarrow0,
\end{equation*}
with $N=N(\delta)$ measurements. The consistent estimation for $\theta_i$ holds if $N^{1/2}\delta^{-2\alpha_i+\alpha_1+\beta_1}\rightarrow\infty$, whereas $\eta_j$ can be estimated in the asymptotic regime $N^{1/2}\delta^{-2\beta_j+\beta_1}\rightarrow\infty$. In particular, estimating elastic coefficients is more difficult under higher dissipation, while damping coefficients are unaffected by the order of the elastic operator $A_\theta$ and their convergence rates reflect the rates obtained in advection-diffusion equations, cf. \cite{altmeyer_anisotrop2021}. For the maximal number of non-overlapping observations $N\asymp\delta^{-d}$, our convergence rates match the rates obtained in the spectral approach up to specific boundary cases. In the weakly damped case, i.e. $\beta_1=0,$ we confirm the results in \cite{lototsky_multichannel_2010}. That is, the dependence of the time horizon $T$ of the asymptotic variance resembles the explosive, stable and ergodic cases of the maximum likelihood drift estimator for an Ornstein--Uhlenbeck process, cf. \cite[Proposition 3.46]{kutoyants_statistical_2013}. In the structural damped case $(\beta_1>0)$, the asymptotic variance is of order $T^{-1}$ instead.

The central limited theorem follows conceptually through a martingale CLT and Slutsky's lemma from the stochastic convergence of the rescaled observed Fisher information matrix. Verification of this convergence, however, is nontrivial and requires new approaches due to the structure of the semigroup associated with \eqref{eq: genCauchyIntro}. It is characterised by $M, N$-functions, cf. \cite[Chapter 1.7]{melnikova_abstract_2001}, which are generalisations of operator cosine and sin functions appearing in the solution to the stochastic wave equation. It is a-priori unclear whether the local measurement scheme is applicable in this setting since the involved $M, N$-functions inherit interacting properties from both the heat and the energetically stable wave equation. Smoothing and damping effects from the heat equation are introduced to the $M, N$-functions through the semigroup $(e^{tB_\eta})_{t\geq0}$, while an oscillating behaviour is mirrored by terms of the form $(\cos(t (-A_\theta-B_\eta^2/4)^{1/2}))_{t\geq0}$ and $(\sin(t (-A_\theta-B_\eta^2/4)^{1/2})(-A_\theta-B_\eta^2/4)^{-1/2})_{t\geq0}$, respectively, which are the operator cosine and sine functions generated by $-A_\theta-B_\eta^2/4$. Consequently, both the elastic operator and the dissipative operator contribute to the oscillating component's behaviour. The underlying solution operator combines phenomena from both heat and wave equations, which, surprisingly, results in a convergence proof that cannot directly combine the steps required in the asymptotic analysis for either of them. Instead, we exploit functional calculus for operators, which relies on a detailed analysis of the underlying $M, N$-functions and their rescaling when applied to localising kernel functions $K_{\delta,x}$. To the best of our knowledge, we are the first to utilise the theory of $M, N$-functions in a statistical field. Thus, we believe that our findings strengthen the existing toolbox of proof techniques and provide beneficial insights for future work in statistics of higher-order SPDEs.

We begin this paper by specifying the model and discussing properties of the local measurements in \Cref{sec: setup}. The augmented MLE is constructed and analysed in \Cref{sec: estimator}, and the CLT is established. The section additionally contains various remarks and examples, complemented by a numerical study underpinning and illustrating the main result. All proofs are deferred to \Cref{sec: proofs}.
\section{Setup}
\label{sec: setup}
\subsection{Notation}
Throughout this paper, we fix a filtered probability space $(\Omega, \mathcal{F},(\mathcal{F}_t)_{0\leq t\leq T},\mathbb{P})$ with a fixed time horizon $T<\infty$. 
We write $a\lesssim b$ if $a\leq Mb$ holds for a universal constant $M$, independent of the resolution level $\delta>0$ and the number of spatial points $N$. Unless stated otherwise, all limits are to be understood as the spatial resolution level tending to zero, i.e. for $\delta\rightarrow 0$. 
For an open set $\Lambda\subset\R^d$, $L^2(\Lambda)$ is the usual $L^2$-space with the inner product $\sca{\cdot}{\cdot}\coloneqq\sca{\cdot}{\cdot}_{L^2(\Lambda)}$. 
The Euclidean inner product and distance of two vectors $a, b\in\R^p$ are denoted by $a^\top b$ and $|a-b|$, respectively. We abbreviate the Laplace operator with Dirichlet boundary conditions on the bounded spatial domain $\Lambda$ by $\Delta$ and on the unbounded spatial domain $\R^d$ by $\Delta_0$.
 Let $H^k(\Lambda)$ denote the usual Sobolev spaces, and denote by $H_0^1(\Lambda)$ the completion of $C_c^{\infty}(\Lambda)$, the space of smooth compactly supported functions, relative to the $H^1(\Lambda)$ norm. As in \cite{kovacs_finite_2010}, let $\dot H^{2s}(\Lambda)\coloneqq\mathcal{D}((-\Delta)^s)$ for $s>0$ be the domain of the fractional Laplace operator on $L^2(\Lambda)$ with Dirichlet boundary conditions. The order of a differential operator $D$ is denoted by $\operatorname{ord}(D).$
\subsection{The model}
Consider the second-order stochastic Cauchy problem
\begin{equation}
    \label{eq: genspde}
    \begin{cases}
        \diff v(t)=\left(A_\theta u(t)+B_\eta  v(t)\right)\diff t+\diff W(t),\quad 0<t\leq T,\\
        \diff u(t)=v(t)\diff t,\\
        u(0)=u_0\in L^2(\Lambda),\\
        v(0)=v_0\in L^2(\Lambda),\\
        u(t,x)=v(t,x)=0,\quad 0\leq t\leq T,\quad x\in\Lambda|_{\partial\Lambda}
    \end{cases}
\end{equation}
on an open, bounded domain $\Lambda\subset\R^d$ having $C^2$-boundary $\partial\Lambda$. We assume Dirichlet boundary conditions and a driving space-time white noise $\diff W$ in \eqref{eq: genspde}. The elasticity and damping operators $A_\theta$ and $B_\eta$ are parameterised by $\theta\in\R^p$ and $\eta\in\R^q$ and given by
\begin{equation}
    \label{eq: opBA}
    \begin{aligned}
        A_\theta&=\sum_{i=1}^p\theta_i(-\Delta)^{\alpha_i}, \quad  D(A_{\vartheta})= D((-\Delta)^{\alpha_1}) = \dot{H}^{2\alpha_1}(\Lambda) ,\\
        B_\eta&=\sum_{j=1}^q\eta_j(-\Delta)^{\beta_j}, \quad  D(B_{\eta})=D((-\Delta)^{\beta_1})= \dot{H}^{2\beta_1}(\Lambda),
    \end{aligned}
    \end{equation}
with $\alpha_1>0$ and $0\leq \alpha_i,\beta_j<\infty$ satisfying $\alpha_1>\alpha_2>\dots >\alpha_p$ and $\beta_1>\beta_2>\dots >\beta_q$.
\begin{example}
\label{ex: heatmap}
\begin{enumerate}
\item[]
\item [(a)]  Weakly damped wave equation ($\beta_1=0$): $A_\theta=-\theta_1\Delta$, $B_\eta=\eta_1.$
\item [(b)] Clamped plate equation: 
\begin{itemize}
    \item [1)] Weakly damped $(\beta_1=0):$ $A_\theta=\theta_1\Delta^2$, $B_\eta=\eta_1.$
    \item [2)] Structurally damped ($0<\beta_1<\alpha_1$): $A_\theta=\theta_1\Delta^2$, $B_\eta=-\eta_1\Delta.$
    \item [3)] Strongly damped ($\beta_1=\alpha_1$): $A_\theta=\theta_1\Delta^2$, $B_\eta=\eta_1\Delta^2.$
\end{itemize}
\Cref{fig: 1} displays a heatmap illustrating both the weakly and structurally damped plate equation in one spatial dimension. The solution of the SPDEs were approximated on a fine time-space grid using the finite difference scheme associated with the semi-implicit Euler-Maruyama method, see \cite[Chapter 10]{lord_introduction_2014}. Additional smoothing properties in the structurally damped case due to the dissipative operator $B_\eta$ result in an accelerated energetic decay in comparison to the weakly damped case.
\end{enumerate}
\end{example}
\begin{figure}[t]
	\centering
		\centering\includegraphics[width=1.\linewidth]{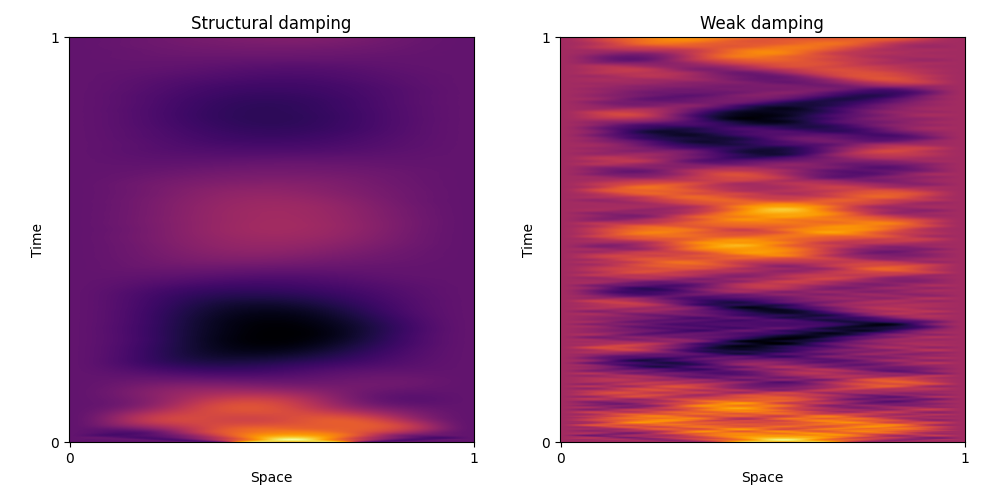}
	\caption{Realisation of the solution $u(t,x)$ to the clamped plate equation on $(0,1)$; (left) $\ddot u(t)=-0.3\Delta^2u(t)+0.3\Delta\dot u(t)+\dot W(t)$; (right) $\ddot u(t)=-0.3\Delta^2u(t)-0.3\dot u(t)+\dot W(t)$.}
 \label{fig: 1}
\end{figure}
\newpage \noindent Throughout the rest of the paper, we impose the following assumptions on the parameters.
\begin{assumption}[{Assumption on the parameters}]
\label{ass: parameterass}
    \begin{itemize}
        \item[]
        \item [(i)] $\theta_1<0;$
        \item [(ii)] If $\beta_1>0,$ then $\eta_1<0;$
        \item [(iii)] $\alpha_1\geq 2\beta_1$ and if $\alpha_1=2\beta_1$ then $\theta_1+\eta_1^2/4<0.$
    \end{itemize} 
\end{assumption}
\Cref{ass: parameterass} does not guarantee that either $A_\theta$ or $B_\eta$ are, in general, negative operators. Instead, it implies that at least all but finitely many of their eigenvalues are negative. Moreover, conditions (i) and (iii) are necessary to ensure that the difference $L_{\theta,\eta}\coloneqq -A_\theta-B_\eta^2/4$ is a positive operator, which itself is not required for the proofs and is just assumed for technical reasons in \Cref{sec: proofs} as all arguments also carry over to the non-positive case, resulting in a complex-valued operator, cf. \Cref{rmk: complexOp}.
\subsection{Local measurements}
For $\delta>0$, $y\in\Lambda$ and $z\in L^2(\R^d)$ we define the rescaling 
\begin{equation}
\label{eq:shift_scaling_operator}
    \begin{aligned}
        \Lambda_{\delta,y}&=\{\delta^{-1}(x-y):x\in\Lambda\},\\
        z_{\delta,y}(x)&=\delta^{-d/2}z(\delta^{-1}(x-y)),\quad x\in\R^d.
    \end{aligned}
\end{equation}
Fix a function $K\in H^{\lceil2\alpha_1\rceil}(\R^d)$ with compact support. By a slight abuse of notation, we define local measurements at the location $x\in\Lambda$ with resolution level $\delta$ as the continuously observed processes $u_{\delta,x}, u^{\Delta_i}_{\delta,x}, v_{\delta,x}, v_{\delta,x}^{\Delta_j}$ where for $i=1,\dots,p$ and $j=1,\dots,q$:
\begin{align*}
    u_{\delta,x} &=(\sca{u(t)}{K_{\delta,x}})_{0\leq t\leq T}, \quad 
    u^{\Delta_i}_{\delta,x}=(\sca{u(t)}{(-\Delta)^{\alpha_i} K_{\delta,x}})_{0\leq t\leq T},\\
    v_{\delta,x}&=(\sca{v(t)}{K_{\delta,x}})_{0\leq t\leq T}, \quad 
    v_{\delta,x}^{\Delta_j}=(\sca{v(t)}{(-\Delta)^{\beta_j}K_{\delta,x}})_{0\leq t\leq T}.
\end{align*}
Analogously to \cite{ziebell_wave23}, these local measurements satisfy the following Itô-dynamics
\begin{equation}
    \label{eq: dynamics}
    \diff u_{\delta,x}(t)=v_{\delta,x}(t)\diff t,\quad \diff v_{\delta,x}(t)=\left(\sum_{i=1}^p\theta_iu^{\Delta_i}_{\delta,x}(t)+\sum_{j=1}^q\eta_jv^{\Delta_j}_{\delta,x}(t)\right)\diff t+ \norm{K}_{L^2(\R^d)}\diff W_x(t),
\end{equation}
with scalar Brownian motions $(W_x(t))_{0 \leq t \leq T}=(\norm{K}^{-1}_{L^2(\R^d)}\sca{W(t)}{K_{\delta,x}})_{0\leq t\leq T}$, which become mutually independent provided that
\begin{equation*}
\sca{K_{\delta,x}}{K_{\delta,x'}}=\norm{K}^2_{L^2(\R^d)}\delta_{x,x'}=0,\quad x,x'\in\R^d,
\end{equation*}
where the Kronecker-delta $\delta_{x,x'}$ evaluates to zero for $x \neq x'$.

For locations $x_1,\dots, x_N\in\Lambda$, we further define the observation vector process $Y_{\delta}\in L^2([0,T];\R^{(p+q)\times N})$ through
\begin{equation}
\label{eq: observation vector}
    Y_{\delta,k} = \begin{pmatrix}
        u_{\delta,x_k}^{\Delta_1}&
        \hdots&
        u_{\delta,x_k}^{\Delta_p}&
        v_{\delta,x_k}^{\Delta_1}&
        \hdots&
        v_{\delta,x_k}^{\Delta_q}
    \end{pmatrix}^{\top}\in\R^{p+q}, \quad k=1, \dots, N.
\end{equation}
\begin{remark}[{Accessibility of the measurements}]
The measurements $(u_{\delta,x}^{\Delta_i}$, $i=1, \dots, p)$, can be approximated by observing $u_{\delta,y}$, $y\in\Lambda,$ on a fine spatial grid in $\Lambda.$  Moreover, all the measurements wrt. $v$, i.e. $v_{\delta,x}$ and $v^{\Delta_j}_{\delta,x}$, $j\leq q$, can be obtained by differentiating $u_{\delta,x}$ and $u^{\Delta_j}_{\delta,x}\coloneqq\sca{u(\cdot)}{(-\Delta)^{\beta_j}K_{\delta,x}}$ in time. 
\end{remark}
A proper understanding of the covariance structure of the local measurements \eqref{eq: observation vector} is a key factor in the asymptotic analysis of the constructed estimator in \Cref{sec: estimator}.
Denoting $L_{\theta,\eta}=-A_\theta-B_\eta^2/4$, the variance of a local measurement satisfies for instance
\begin{align}
\label{eq: var_tease}
    \operatorname{Var}(u^{\Delta_i}_{\delta,x}(t))=\int_0^t\norm{e^{sB_\eta}\sin(s L_{\theta,\eta}^{1/2})L_{\theta,\eta}^{-1/2}(-\Delta)^{\alpha_i}K_{\delta,x}}^2\diff s,\quad 1\leq i\leq p,\quad 0<t\leq T.
\end{align}
Intriguingly, phenomena from both heat and wave equations are infused in the representation of the variance \eqref{eq: var_tease}. Indeed, in the heat equation (cf. \cite{altmeyer_nonparametric_2020}), the variance structure is determined by the semigroup $(e^{t\Delta})_{t\geq0}$ generated by the Laplacian $\Delta$. It has a smoothing behaviour and induces the system's energetic decay. In the wave equation (cf. \cite{ziebell_wave23}), the corresponding variance term relies on the oscillating operator sine function $(\sin(t(-\Delta)^{1/2})(-\Delta)^{-1/2})_{t\geq0}$ generated by $-\Delta$. In \eqref{eq: var_tease}, both effects are present through the differential operators $B_\eta$ and $L_{\theta,\eta}$ which leads to the study of $M,N$-functions generalising operator cosine and sine functions. When applied to localising functions $K_{\delta,x},$ a rescaling occurs, which takes both effects into account. Even though the limits coincide, asymptotic convergence of \eqref{eq: var_tease} for $\delta\rightarrow0$ follows not through the integrated convergence of the heat kernel or the pointwise convergence (asymptotic equipartition of energy) known from the wave equation. Instead, we have to take a more subtle approach utilising functional calculus for operators as presented in \Cref{sec: proofs}.

\section{The estimator}
\label{sec: estimator}
Motivated by a general Girsanov theorem, as described in detail in \cite{altmeyer_nonparametric_2020,altmeyer_anisotrop2021}, the augmented MLE $(\hat{\theta}_{\delta},\hat{\eta}_{\delta})^\top\in\R^{p+q}$ is given by
\begin{equation}
    \label{eq: jointest}
    \begin{pmatrix}
        \hat{\theta}_\delta\\
        \hat{\eta}_\delta
    \end{pmatrix}=\mathcal{I}_{\delta}^{-1}\sum_{k=1}^N\int_0^TY_{\delta,k}(t)\diff v_{\delta,x_k}(t)
\end{equation}
with the observed Fisher information matrix 
\begin{equation}
    \label{eq: FisherJoint}
    \mathcal{I}_{\delta}=\sum_{k=1}^N\int_0^TY_{\delta,k}(t)Y_{\delta,k}(t)^\top\diff t.
\end{equation}
Clearly, the matrix $\mathcal{I}_\delta$ is symmetric and positive semidefinite.
By plugging the Itô-dynamics \eqref{eq: dynamics} into the definition of the estimator \eqref{eq: jointest}, we obtain the decomposition 
\begin{equation}
\label{eq: decompJoint}
    \begin{pmatrix}
        \hat{\theta}_\delta\\
        \hat{\eta}_\delta
    \end{pmatrix}=\begin{pmatrix}
        {\theta}\\
        {\eta}
    \end{pmatrix}+\norm{K}_{L^2(\R^d)}\mathcal{I}_\delta^{-1}\mathcal{M}_\delta
\end{equation}
on the event $\{\operatorname{det}(\mathcal{I}_\delta)>0\}$ with the martingale part
\begin{equation}
    \label{eq: martingaleJoint}
    \mathcal{M}_\delta=\sum_{k=1}^N\int_0^TY_{\delta,k}(t)\diff W_{x_k}(t).
\end{equation}
As the limiting object of the rescaled observed Fisher information is deterministic and invertible (see \Cref{result: convergence_joint_estimator2} below), $\mathcal{I}_\delta$ will itself be invertible for sufficiently small $\delta$, cf. \cite[Theorem A.7.7, Corollary A.7.8]{küchler_exponential_1997}.
\begin{assumption}[Regularity of the kernel and the initial condition]
\label{ass: mainass2}
        \begin{enumerate}[label=(\roman*)]
        \item[]
        \item \label{assumption:1}The locations $x_k$, $k=1,\dots,N$, belong to a fixed compact set $\mathcal{J}\subset\Lambda$, which is independent of $\delta$ and $N$. There exists $\delta'>0$ such that $\mathrm{supp}(K_{\delta,x_k})\cap\mathrm{supp}(K_{\delta,x_l})=\emptyset$ for $k\neq l$ and all $\delta\leq\delta'$.
        \item \label{assumption:2}There exists a compactly supported function $\tilde K\in H^{\lceil 2\alpha_1\rceil+2\lceil \alpha_1\rceil}(\R^d)$ 
        such that $K=\Delta_0^{\lceil \alpha_1\rceil}\tilde K$.
        \item  \label{assumption:3}The functions $(-\Delta)^{\alpha_i-(\alpha_1+\beta_1)/2}K$ are linearly independent for all $i=1,\dots ,p$, and the functions $(-\Delta)^{\beta_j-\beta_1/2}K$ are linearly independent for all $j=1,\dots,q$.
        \item \label{assumption:4}The initial condition $(u_0,v_0)^\top$ in \eqref{eq: genspde} takes values in $\dot H^{2\alpha_1}(\Lambda)\times \dot H^{\alpha_1}(\Lambda)$.
        \end{enumerate}
\end{assumption}
\Cref{ass: mainass2} (i) ensures that $\sca{K_{\delta,x_k}}{K_{\delta,x_l}}=\norm{K}^2_{L^2(\R^d)}\delta_{k,l}$ with the Kronecker-delta $\delta_{k,l}$. Consequently, the Brownian motions $W_{x_k}$ become mutually independent if $\delta$ is sufficiently small. Thus, $\mathcal{I}_\delta$ forms the quadratic variation process of the time-martingale $\mathcal{M}_\delta$, and we expect $\mathcal{I}_\delta^{-1/2}\mathcal{M}_\delta$ to be asymptotically normally distributed. Both (ii) and (iii) guarantee that the limiting object of the observed Fisher information is well-defined and invertible, while (iv) ensures that the initial condition is asymptotically negligible. In principle, the required smoothness in (ii) can be relaxed, depending on the dimension $d$ and the identifiability of the appearing parameters in \eqref{eq: genspde}, but is kept for the simplification of the proofs. 

We define a diagonal matrix of scaling coefficients $\rho_\delta\in\R^{(p+q)\times (p+q)}$ via
\begin{equation}
\label{eq: rho}
    (\rho_{\delta})_{ii}\coloneqq\begin{cases}
        N^{-1/2}\delta^{2\alpha_i-\alpha_1-\beta_1},\quad 1 \leq i\leq p,\\
        N^{-1/2}\delta^{2\beta_{i-p}-\beta_1},\quad p<i\leq p+q,
    \end{cases}
\end{equation}
and the constant $C(\eta_1,T)$ through
\begin{equation}
\label{eq: constantC}
    C(\eta_1,T)\coloneqq\begin{cases}
        \frac{e^{T\eta_1}-T\eta_1-1}{2\eta_1^2},&\quad\eta_1\neq0,\\
        \frac{T^2}{4},&\quad \eta_1=0.
    \end{cases}
\end{equation}
The following result shows the asymptotic normality of the estimator \eqref{eq: jointest}. 
\begin{theorem}[{Asymptotic behaviour of the joint estimator}]
    \label{result: convergence_joint_estimator2}
    Grant \Cref{ass: mainass2}.
    \begin{itemize}
        \item [(i)] The matrix $\Sigma_{\theta,\eta}\in\R^{(p+q)\times (p+q)}$, given by
        $$\Sigma_{\theta,\eta}\coloneqq \begin{pmatrix}
            \Sigma_{1,\theta,\eta}&0\\
            0&\Sigma_{2,\theta,\eta}
        \end{pmatrix}$$
        with 
        \begin{align*}
            (\Sigma_{1,\theta,\eta})_{ij}&=\begin{cases}
                -\frac{C(\eta_1,T)}{\theta_1}\norm{(-\Delta_0)^{(\alpha_i+\alpha_j-\alpha_1)/2}K}^2_{L^2(\R^d)},&\quad \beta_1=0,\\
                \frac{T}{2\theta_1\eta_1}\norm{(-\Delta_0)^{(\alpha_i+\alpha_j-\alpha_1-\beta_1)/2}K}^2_{L^2(\R^d)},&\quad \beta_1>0,
            \end{cases}\\
            (\Sigma_{2,\theta,\eta})_{kl}&=\begin{cases}
                C(\eta_1,T)\norm{K}^2_{L^2(\R^d)},&\quad \beta_1=0,\\
                -\frac{T}{2\eta_1}\norm{(-\Delta_0)^{(\beta_k+\beta_l-\beta_1)/2}K}^2_{L^2(\R^d)},&\quad \beta_1>0,
            \end{cases}
        \end{align*}
        for $1\leq i,j\leq p$ and $1\leq k,l\leq q$,
        is well-defined and invertible. In particular, the observed Fisher information matrix admits the convergence 
        \begin{equation*}
            \rho_\delta\mathcal{I}_\delta\rho_\delta\stackrel{\mathbb{P}}{\rightarrow}\Sigma_{\theta,\eta}, \quad \delta \rightarrow 0.
        \end{equation*}
        \item [(ii)] The estimator $(\hat{\theta_\delta},\hat{\eta}_\delta)^\top$ is consistent and asymptotically normal, i.e
        $$\rho_\delta^{-1}\begin{pmatrix}
            \hat{\theta}_\delta-\theta\\
            \hat{\eta}_\delta-\eta
        \end{pmatrix}\stackrel{d}{\rightarrow}\mathcal{N}(0,\norm{K}^2_{L^2(\R^d)}\Sigma_{\theta,\eta}^{-1}),\quad\delta\rightarrow0.$$
    \end{itemize}
    
\end{theorem}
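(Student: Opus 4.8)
The plan is to follow the route sketched in the introduction: establish part~(i)---the stochastic convergence of the rescaled observed Fisher information---by a law-of-large-numbers argument exploiting the spatial independence of the measurements, and then deduce part~(ii) from a martingale central limit theorem combined with Slutsky's lemma applied to the decomposition \eqref{eq: decompJoint}. As a preliminary reduction I would write the solution of \eqref{eq: genspde} as $u=\bar u+u^{\mathrm{ic}}$, where $\bar u$ has vanishing initial condition and $u^{\mathrm{ic}}$ is the homogeneous solution carrying $(u_0,v_0)$; using the mild-solution representation in terms of the $M,N$-functions together with decay estimates for them on $\dot H^{2\alpha_1}(\Lambda)\times\dot H^{\alpha_1}(\Lambda)$, \Cref{ass: mainass2}\,(iv) makes the contribution of $u^{\mathrm{ic}}$ to both $\rho_\delta\mathcal I_\delta\rho_\delta$ and $\rho_\delta\mathcal M_\delta$ asymptotically negligible, so one may work with $\bar u$ alone. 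For $\delta\le\delta'$ the supports of the $K_{\delta,x_k}$ are disjoint by \Cref{ass: mainass2}\,(i), hence the stationary processes $Y_{\delta,x_1},\dots,Y_{\delta,x_N}$ are independent, the Brownian motions $W_{x_k}$ are mutually independent, and (up to the above correction) $\mathcal I_\delta$ equals the quadratic variation at time $T$ of the continuous $\R^{p+q}$-valued martingale $t\mapsto\sum_{k=1}^N\int_0^t Y_{\delta,x_k}(s)\,\diff W_{x_k}(s)$.

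The heart of the proof---and the step I expect to be the main obstacle---is to identify $\lim_{\delta\to0}\mathbb E[\rho_\delta\mathcal I_\delta\rho_\delta]$. By the covariance formula \eqref{eq: var_tease} and its analogues for the $v$-measurements (involving $\tfrac{\diff}{\diff s}\bigl(e^{sB_\eta}\sin(sL_{\theta,\eta}^{1/2})L_{\theta,\eta}^{-1/2}\bigr)$), each entry of $\mathbb E[\mathcal I_\delta]$ is $N$ times an explicit power of $\delta$ times an integral $\int_0^T\!\int_0^t\langle\cdots\rangle\,\diff s\,\diff t$ of products of rescaled $M,N$-functions applied to $K_{\delta,x}$. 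Undoing the rescaling \eqref{eq:shift_scaling_operator} turns $-\Delta$ into $\delta^{-2}(-\Delta_0)$ (replacing $\Lambda_{\delta,x_k}$ by $\R^d$ costs only exponentially small boundary corrections), so that to leading order $L_{\theta,\eta}^{1/2}\approx(-\theta_1)^{1/2}\delta^{-\alpha_1}(-\Delta_0)^{\alpha_1/2}$ and $e^{sB_\eta}\approx e^{s\eta_1\delta^{-2\beta_1}(-\Delta_0)^{\beta_1}}$; the limits are then extracted by functional calculus for these commuting self-adjoint operators rather than by pointwise or integrated kernel limits. When $\beta_1=0$ the damping semigroup is the scalar $e^{s\eta_1}$, no smoothing is present, and the limit is governed by an asymptotic-equipartition-of-energy phenomenon for the operator cosine/sine family generated by $-A_\theta$ (rapid oscillation makes $\sin^2,\cos^2$ contribute their average $\tfrac12$); integrating the remaining scalar $e^{s\eta_1}$ over $s$ and then over $t\in[0,T]$ produces the constant $C(\eta_1,T)$. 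When $\beta_1>0$ the damping term smooths, and since $\eta_1<0$ by \Cref{ass: parameterass}\,(ii) the rescaled damping semigroup decays so that the time integral $\int_0^\infty$ converges; this lowers the relevant fractional Sobolev order by $\beta_1$ and yields the heat-kernel-type factor $T/2$, as in advection--diffusion problems \cite{altmeyer_anisotrop2021} (in the borderline case $\alpha_1=2\beta_1$, where $-\theta_1-\eta_1^2/4>0$ by \Cref{ass: parameterass}\,(iii), one additionally uses $\int_0^\infty e^{-a\tau}\sin^2(b\tau)\,\diff\tau=2b^2/(a(a^2+4b^2))$ and the identity $\eta_1^2+4(-\theta_1-\eta_1^2/4)=-4\theta_1$ to recover the stated formula). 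Finally, the off-diagonal ($u$ versus $v$) block of the limit vanishes---the corresponding $M,N$-function products are oscillatory and of strictly lower order when $\alpha_1>2\beta_1$, and the surviving non-oscillatory contributions cancel exactly in the borderline case via the same identity---which gives the block-diagonal form of $\Sigma_{\theta,\eta}$.

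It remains to show that each entry of $\rho_\delta\mathcal I_\delta\rho_\delta$ concentrates. By spatial independence its variance is $N^{-1}$ times the variance of a single rescaled quadratic functional of a Gaussian process; expanding the latter with the Wick/Isserlis formula into products of the covariance functions already controlled above bounds it uniformly in $\delta$, so $\rho_\delta\mathcal I_\delta\rho_\delta\stackrel{\mathbb P}{\rightarrow}\Sigma_{\theta,\eta}$. The matrix $\Sigma_{\theta,\eta}$ is well-defined because $K\in H^{\lceil2\alpha_1\rceil}(\R^d)$ and, by \Cref{ass: mainass2}\,(ii), $\widehat K$ vanishes to order $2\lceil\alpha_1\rceil$ at the origin, so all the (possibly negative-order) norms $\norm{(-\Delta_0)^{\gamma}K}_{L^2(\R^d)}$ occurring in it are finite; it is invertible because $\Sigma_{1,\theta,\eta}$ and $\Sigma_{2,\theta,\eta}$ are, up to the scalar prefactors $-C(\eta_1,T)/\theta_1$, $T/(2\theta_1\eta_1)$, $C(\eta_1,T)$ and $-T/(2\eta_1)$ (all nonzero by \Cref{ass: parameterass}\,(i),(ii) and $C(\eta_1,T)>0$), the Gram matrices of the families $\bigl((-\Delta_0)^{\alpha_i-(\alpha_1+\beta_1)/2}K\bigr)_{i\le p}$ and $\bigl((-\Delta_0)^{\beta_j-\beta_1/2}K\bigr)_{j\le q}$, which are linearly independent by \Cref{ass: mainass2}\,(iii) and therefore yield positive definite Gram matrices. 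This proves part~(i).

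Finally, $\rho_\delta\mathcal M_\delta=\sum_{k=1}^N\int_0^T\rho_\delta Y_{\delta,x_k}(t)\,\diff W_{x_k}(t)$ is a continuous $\R^{p+q}$-valued martingale whose quadratic variation is $\rho_\delta\mathcal I_\delta\rho_\delta\stackrel{\mathbb P}{\rightarrow}\Sigma_{\theta,\eta}$ with $\Sigma_{\theta,\eta}$ deterministic and invertible; since the martingale is continuous the Lindeberg condition holds automatically, so the martingale CLT (via a Dambis--Dubins--Schwarz time change, say) gives $\rho_\delta\mathcal M_\delta\stackrel{d}{\rightarrow}\mathcal N(0,\Sigma_{\theta,\eta})$. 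Invertibility of the limit forces $\det\mathcal I_\delta>0$ with probability tending to one, and on this event \eqref{eq: decompJoint} together with the diagonality of $\rho_\delta$ gives
\[
\rho_\delta^{-1}\begin{pmatrix}\hat\theta_\delta-\theta\\ \hat\eta_\delta-\eta\end{pmatrix}=\norm{K}_{L^2(\R^d)}\,(\rho_\delta\mathcal I_\delta\rho_\delta)^{-1}\,(\rho_\delta\mathcal M_\delta).
\]
Slutsky's lemma---the first factor converges in probability to the constant $\Sigma_{\theta,\eta}^{-1}$, the second in distribution---then yields $\rho_\delta^{-1}\bigl((\hat\theta_\delta-\theta)^\top,(\hat\eta_\delta-\eta)^\top\bigr)^\top\stackrel{d}{\rightarrow}\mathcal N\bigl(0,\norm{K}_{L^2(\R^d)}^2\Sigma_{\theta,\eta}^{-1}\bigr)$, and consistency follows since $\rho_\delta\to0$ entrywise in the asymptotic regime under consideration.
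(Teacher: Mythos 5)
Your proposal follows essentially the same route as the paper: reduce to zero initial condition via \Cref{ass: mainass2}\,(iv) and Cauchy--Schwarz, compute the limiting Fisher information entrywise through the covariance representation and rescaled functional calculus for the $M,N$-functions (asymptotic equipartition when $\beta_1=0$, smoothing-induced integrability when $\beta_1>0$), control fluctuations via Wick's formula, deduce invertibility from the linear-independence condition in \Cref{ass: mainass2}\,(iii), and conclude by a multivariate martingale CLT plus Slutsky. The only details the paper makes precise that your sketch glosses over are that the functional-calculus limits hinge on strong resolvent convergence of the rescaled Dirichlet Laplacian rather than an ``exponentially small boundary correction'' heuristic, and that the $\beta_1>0$ case is settled by an exact closed-form primitive of the integrand (so that the $A$, $B$, $L$ contributions recombine algebraically, handling the borderline $\alpha_1=2\beta_1$ uniformly) rather than purely by decay bounds.
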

The convergence rates among the different parameters are given by \eqref{eq: rho}. As the number of observation points cannot exceed $N\asymp\delta^{-d}$ due to the disjoint support condition of \Cref{ass: mainass2}, not all coefficients can, in general, be consistently estimated in all dimensions, see \Cref{ex: application}. In contrast to parameter estimation in convection-diffusion equations based on local measurements in \cite{altmeyer_anisotrop2021}, the convergence rates for speed parameters are influenced not only by the order $\alpha_1$ of $A_\theta$, but also by $\beta_1$, the order of the damping operator $B_\eta$. Unsurprisingly, higher-order damping results in worse convergence rates as the parameters are harder to identify due to the associated dissipation of energy within the system. On the other hand, the rates for the damping coefficients are not influenced by the order of $A_\theta$, and their rates mirror the rates known from parabolic equations, cf. \cite{huebner_asymptotic_1995,altmeyer_anisotrop2021}. Similar effects were already observed under the full observation scheme $N\asymp\delta^{-d}$ in the spectral approach, cf. \cite{lototsky_multichannel_2010}, leading to identical convergence rates.

In addition to the joint asymptotic normality of the augmented MLE $(\hat{\theta}_\delta,\hat{\eta}_\delta)^\top$, \Cref{result: convergence_joint_estimator2} further yields the asymptotic independence of its components, i.e. the marginal estimators for elastic and damping parameters are asymptotically independent.

If the equation is weakly damped, i.e. if $\beta_1=0$ and $\eta_1<0$, then the term $-T\eta_1$ dominates the expression $(e^{T\eta_1}-T\eta_1-1)/2\eta_1^2$ in the asymptotic variance within \eqref{eq: constantC} as $T \rightarrow \infty$. The converse is true in the amplified case with $\eta_1>0$. If $\eta_1 = 0$, the asymptotic variance of the augmented MLE for $\vartheta$ and $\eta_1$ depends on the time horizon through $T^{-2}$ as discussed in \cite[Remark 5.8]{ziebell_wave23}. It mirrors the rate of convergence of the MLE in the ergodic, stable, and explosive case of the standard Ornstein--Uhlenbeck process as described in \cite[Proposition 3.46]{kutoyants_statistical_2013}.

If, on the other hand, $\beta_1>0$, then the asymptotic variance of the MLE is of order $T^{-1}$ in time. In other words, any dissipation decelerates the temporal convergence rate to the rate $T^{-1}$ associated with parabolic equations.

\begin{remark}[{Parameter estimation under higher-order damping}]
For simplicity, we did not consider cases where the damping dominates \eqref{eq: genspde}, i.e. where $2\beta_1>\alpha_1.$ Nonetheless, studying parameter estimation in those situations is neither impossible nor does it require new approaches. It solely relies on a careful analysis of underlying terms within the asymptotic analysis of the observed Fisher information, which may potentially become complex-valued, cf. also \Cref{rmk: complexOp}. Taking this into account, similar convergence rates may be established.
\end{remark}

\begin{example}
\label{ex: application}
\begin{itemize}
\item[]
\item[(a)] Weakly damped (or amplified) wave equation: Consider the weakly damped (or amplified) wave equation $(\theta_1>0$, $\eta_1\in\R$):
        $$\diff v(t)=(\theta_1\Delta u(t)+\eta_1v(t))\diff t+\diff W(t),\quad 0<t\leq T.$$
        Then, \Cref{result: convergence_joint_estimator2} implies
        \begin{align*}
            \begin{pmatrix}
                N^{-1/2}\delta (\hat{\theta}_\delta-\theta_1)\\
                N^{-1/2}(\hat{\eta}_\delta-\eta_1)
            \end{pmatrix}\stackrel{d}{\rightarrow}\mathcal{N}\left(\begin{pmatrix}
                0\\0
            \end{pmatrix},\begin{pmatrix}
                \frac{\theta_1\norm{K}^2_{L^2(\R^d)}}{C(\eta_1,T)\norm{(-\Delta_0)^{1/2}K}^2_{L^2(\R^d)}}&0\\
                0&\frac{1}{C(\eta_1,T)}
            \end{pmatrix}\right).
        \end{align*}
        Thus, the augmented MLE attains the convergence rate known from the spectral approach, see \cite{lototsky_multichannel_2010} if it is provided with the maximal number of spatial observations $N\asymp\delta^{-d}$. Interestingly, the limiting variance of $\hat{\eta}_\delta$ is independent of the kernel function $K$ similar to the augmented MLE for the first order transport coefficient in \cite{altmeyer_anisotrop2021}.
    \item[(b)] Clamped plate equation: Consider the clamped plate equation with 
        \begin{itemize}
            \item [1)] Weak damping ($\theta_1>0$, $\eta_1\in\R$): 
            \begin{equation}
                \label{eq: plate weak}
                \diff v(t)=(-\theta_1\Delta^2 u(t)+\eta_1v(t))\diff t+\diff W(t),\quad 0<t\leq T.
            \end{equation}
            \item [2)] Structural damping ($\theta_1>0$, $\eta_1>0$): 
            \begin{equation}
                \label{eq: plate struc}
                \diff v(t)=(-\theta_1\Delta^2 u(t)+\eta_1\Delta v(t))\diff t+\diff W(t),\quad 0<t\leq T.
            \end{equation}
        \end{itemize}
        A realisation of the solution can be seen in \Cref{fig: 1}. Depending on the type of damping, the convergence rate for both $\theta_1$ and $\eta_1$ changes. In the case of the weakly damped plate equation \eqref{eq: plate weak}, the CLT yields
        \begin{align*}
            \begin{pmatrix}
                N^{1/2}\delta^{-2} (\hat{\theta}_\delta-\theta_1)\\
                N^{1/2}(\hat{\eta}_\delta-\eta_1)
            \end{pmatrix}\stackrel{d}{\rightarrow}\mathcal{N}\left(\begin{pmatrix}
                0\\0
            \end{pmatrix},\begin{pmatrix}
                \frac{\theta_1\norm{K}^2_{L^2(\R^d)}}{C(\eta_1,T)\norm{\Delta_0K}^2_{L^2(\R^d)}}&0\\
                0&\frac{1}{C(\eta_1,T)}
            \end{pmatrix}\right),
        \end{align*}
        while 
        \begin{align*}
            \begin{pmatrix}
                N^{1/2}\delta^{-1} (\hat{\theta}_\delta-\theta_1)\\
                N^{1/2}\delta^{-1} (\hat{\eta}_\delta-\eta_1)
            \end{pmatrix}\stackrel{d}{\rightarrow}\mathcal{N}\left(\begin{pmatrix}
                0\\0
            \end{pmatrix},\begin{pmatrix}
                \frac{2\theta_1\eta_1\norm{K}^2_{L^2(\R^d)}}{T\norm{(-\Delta_0)^{1/2}K}^2_{L^2(\R^d)}}&0\\
                0&\frac{2\eta_1\norm{K}^2_{L^2(\R^d)}}{T\norm{(-\Delta_0)^{1/2}K}^2_{L^2(\R^d)}}
            \end{pmatrix}\right).
        \end{align*}
        holds under the structural damping given in \eqref{eq: plate struc}. The asymptotic variances between $\hat{\theta}_\delta$ and $\hat{\eta}_\delta$ coincide in the cases $\theta_1=\norm{\Delta_0K}^2_{L^2(\R^d)}\norm{K}_{L^2(\R^d)}^{-2}$ or $\theta_1=1$, respectively. The consistency and the varying convergence rates of the estimators are visualised in \Cref{fig: rates_weak}. Based on the finite difference scheme within the semi-implicit Euler-Maruyama method \cite[Chapter 10]{lord_introduction_2014} (with $10000000\times 2000$ time-space grid points), we computed the root mean squared error (RMSE) for decreasing resolution level $\delta$ from 100 Monte Carlo runs, $N\asymp\delta^{-1}$ measurement locations and the kernel function $K(x)=\exp(-5/(1-x^2))\mathbf{1}(|x|<1)$. In the weakly damped case, it can be seen that the estimator for the elastic coefficient $\theta_1$ achieves a much quicker convergence rate than the estimator of the damping coefficient $\eta_1$. On the other hand, their rates are equal under structural damping. The asymptotic variances are attained in both cases.
    \item[(c)] General hyperbolic equation: Consider the hyperbolic equation
        $$\diff v(t)=\left(\sum_{i=1}^p\theta_i(-\Delta)^{\alpha_i}u(t)+\sum_{j=1}^q\eta_j(-\Delta)^{\beta_j} v(t)\right)\diff t+\diff W(t),\quad 0<t\leq T,$$
        with $p+q$ unknown parameters. Then, the convergence rates for $\theta_i$ and $\eta_j$, respectively, are given by 
        \begin{align*}
            \begin{cases}
                N^{-1/2}\delta^{2\alpha_i-\alpha_1-\beta_1},\quad 1\leq i\leq p,\\
                N^{-1/2}\delta^{2\beta_j-\beta_1},\quad 1\leq j\leq q.
            \end{cases}
        \end{align*}
        Given the maximal number of local measurements $N\asymp\delta^{-d}$, these rates translate to
        \begin{align}
        \label{eq:spectral_rates}
            \begin{cases}
                \delta^{d/2+2\alpha_i-\alpha_1-\beta_1},\quad 1\leq i\leq p,\\
                \delta^{d/2+2\beta_j-\beta_1},\quad 1\leq j\leq q.
            \end{cases}
        \end{align}
        Thus, our method provides a consistent estimator for a parameter $\theta_i$ or $\eta_j$, respectively, if and only if the conditions
        \begin{align}
            \alpha_i&>(\alpha_1+\beta_1-d/2)/2,\quad i\leq p,\label{eq: consis1}\\
            \beta_j&>(\beta_1-d/2)/2,\quad j\leq q,\label{eq: consis2}
        \end{align}
        hold. The convergence rates \eqref{eq:spectral_rates} are also obtained in the spectral regime, cf. \cite[Theorem 1.1 and Theorem 4.1]{lototsky_multichannel_2010} under the consistency conditions \eqref{eq: consis1} and \eqref{eq: consis2}. Given equality in these conditions, the authors further verified a logarithmic rate. We believe that the logarithmic rates in this boundary case are also valid in the local measurement approach given that less restrictive assumptions on the kernel $K$ are imposed, similar to \cite[Proposition 5.4]{altmeyer_anisotrop2021} in a related parabolic problem. 
\end{itemize}
\end{example}
\begin{figure}[t]
    \centering
    \begin{subfigure}{.49\linewidth}
        \centering\includegraphics[width=1.\linewidth]{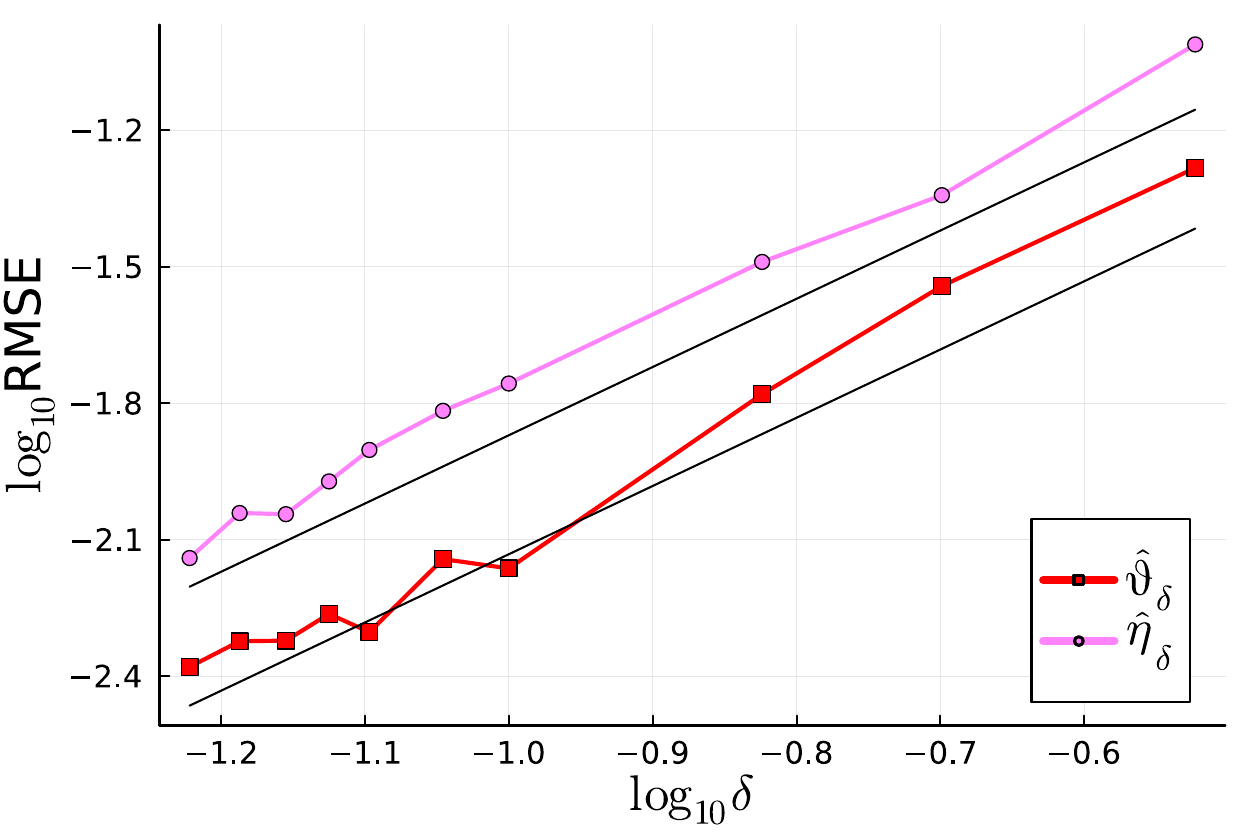}
    \end{subfigure}
    \begin{subfigure}{.49\linewidth}
        \centering\includegraphics[width=1.0\linewidth]{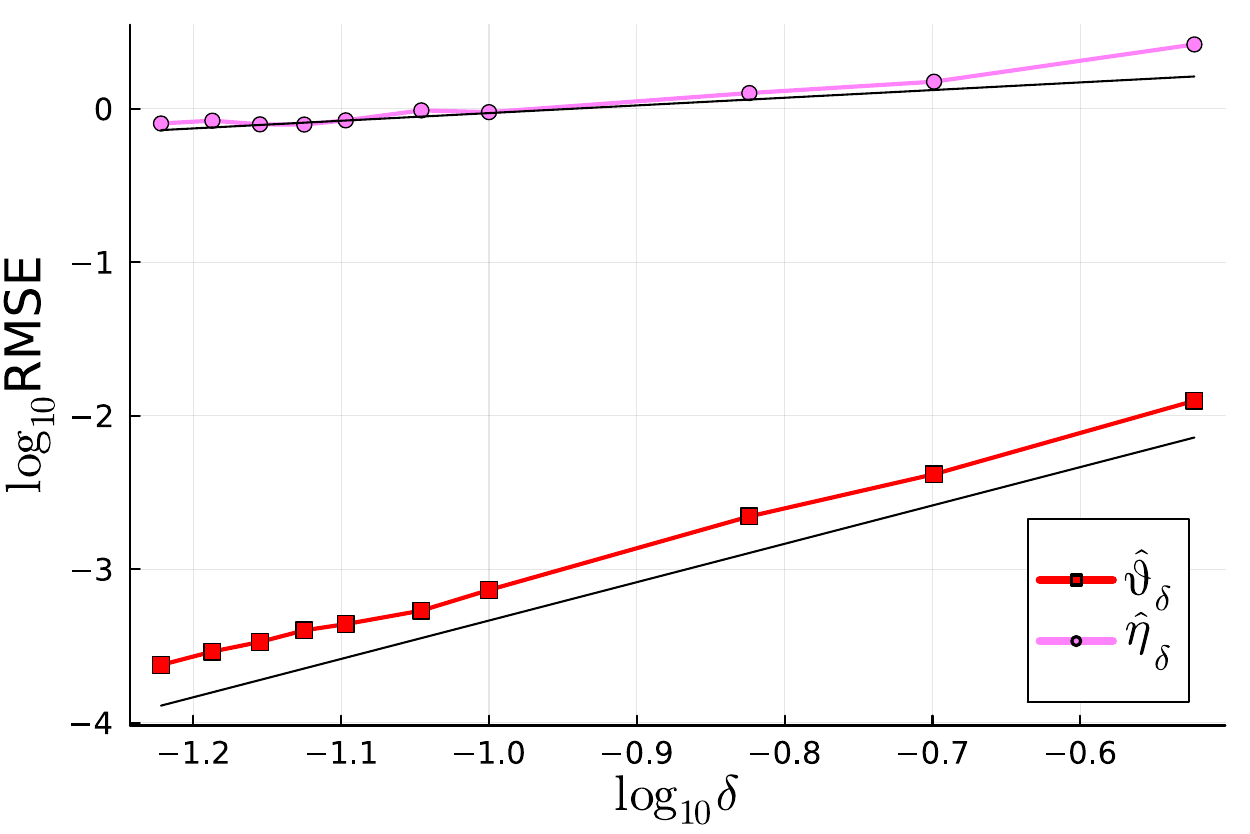}
    \end{subfigure}
    \caption{$\log$-$\log$ plot of the RMSE for $\delta\rightarrow0$ and a maximal number of measurement locations in $d=1$ compared with the theoretical rate in black; (left) structurally damped \eqref{eq: plate struc} with $\eta_1=0.3$, $\theta_1=0.3$; (right) weak damping \eqref{eq: plate weak} with $\eta_1=-0.3,$ $\theta_1=0.3$.}
    \label{fig: rates_weak}
\end{figure}

\section{Proofs}
\label{sec: proofs}
For $\delta>0$ and $x\in\Lambda$ denote by $\Delta_{\delta,x}$ the Laplace operator with Dirichlet boundary conditions on $\Lambda_{\delta,x}$ and define the following differential operators with domain $\dot{H}^{2\alpha_1}(\Lambda)$ and $\dot{H}^{2\alpha_1}(\Lambda_{\delta,x})$, respectively: 
    \begin{align}
        \label{eq: rescaled_operator_gen}
        L_{\theta,\eta} z&\coloneqq(-A_\theta-B_\eta^2/4) z=-\sum_{i=1}^p\theta_i(-\Delta)^{\alpha_i}z-\frac{1}{4}\sum_{k,l=1}^q\eta_k\eta_l(-\Delta)^{\beta_k+\beta_l}z,\nonumber\\
        L_{\theta,\eta,\delta,x}z&\coloneqq-\sum_{i=1}^p\delta^{2\alpha_1-2\alpha_i}\theta_i(-\Delta_{ {\delta,x}})^{\alpha_i}z-\frac{1}{4}\sum_{k,l=1}^q\delta^{2\alpha_1-2\beta_k-2\beta_l}\eta_k\eta_l(-\Delta_{{\delta,x}})^{\beta_k+\beta_l}z\nonumber.
    \end{align}
    Introduce further the rescaled versions of $B_\eta$ and $A_\theta$, defined through
    \begin{align*}
        B_{\eta,\delta,x}&\coloneqq\sum_{j=1}^q\delta^{2\beta_1-2\beta_j}\eta_j(-\Delta_{ {\delta,x}})^{\beta_j},\quad D(B_{\eta,\delta,x})=D((-\Delta_{\delta,x})^{\beta_1})=\dot{H}^{2\beta_1}(\Lambda_{\delta,x}),\\
        A_{\theta,\delta,x}&\coloneqq\sum_{i=1}^p\delta^{2\alpha_1-2\alpha_i}\theta_i(-\Delta_{\delta,x})^{\alpha_i}, \quad D(A_{\theta,\delta,x})=D((-\Delta_{\delta,x})^{\alpha_1})=\dot{H}^{2\alpha_1}(\Lambda_{\delta,x}),
    \end{align*}
    and the limiting objects
    \begin{align*}
        \bar{L}_{\theta,\eta}z&\coloneqq\begin{cases}
            -\theta_1(-\Delta_{0})^{\alpha_1}z,\quad \alpha_1>2\beta_1,\\
            -(\theta_1+\frac{\eta_1^2}{4})(-\Delta_{0})^{\alpha_1}z,\quad \alpha_1=2\beta_1,
        \end{cases} \quad D(\bar{L}_{\theta,\eta})=D((-\Delta_0)^{\alpha_1})=\dot{H}^{2\alpha_1}(\mathbb{R}^{d}),\\
        \bar{B}_\eta&\coloneqq\eta_1(-\Delta_{0})^{\beta_1}, \quad D(\bar{B}_\eta)=D((-\Delta_{0})^{\beta_1})=\dot{H}^{2\beta_1}(\mathbb{R}^{d}),\\
        \bar A_{\theta}&\coloneqq\theta_1(-\Delta_0)^{\alpha_1}, \quad D(\bar A_{\theta})= D((-\Delta_0)^{\alpha_1})=\dot{H}^{2\alpha_1}(\mathbb{R}^{d}).
        \end{align*}
    The study of the operator $L_{\theta,\eta}$ and the convergence $L_{\theta,\eta,\delta,x}\rightarrow \bar{L}_{\theta,\eta}$ is of particular importance as $L_{\theta,\eta}$ appears in the solution to the coupled second-order system \eqref{eq: genspde} through the $M,N$-functions (generalisations of operator cosine and sine functions) introduced in \Cref{lem: MN functions} below. It captures the interaction between the elasticity operator $A_\theta$ and the dissipative operator $B_\eta.$ The asymptotically dominating term of $L_{\theta,\eta,\delta,x}$ determines the limiting object $\bar{L}_{\theta,\eta}$. We will thus start by some fundamental observations regarding the differential operator $L_{\theta,\eta}$ which plays a crucial role in the following \Cref{sec: cosine,sec: asymptotic}.
    
    We will frequently use that $L_{\theta,\eta}$ is an (unbounded) normal operator with spectrum $\sigma(L_{\theta,\eta})$ and the resolution of identity $E$ (cf. \cite[Chapter 13]{rudin_functional_1991}). By the functional calculus for normal unbounded operators, we can define the operator
    $f(L_{\theta,\eta})\coloneqq \int_{\sigma(L_{\theta,\eta})}f(\lambda)\diff E(\lambda)$ on the domain
    \begin{equation*}
        \mathcal{D}_f \coloneqq \mathcal{D}(f(L_{\theta,\eta}))= \left\{z\in L^2(\Lambda):\int_{\sigma(L_{\theta,\eta})}|f(\lambda)|^2\diff E_{z,z}(\lambda)<\infty \right\},
    \end{equation*}
    for any measurable function $f:\mathbb{C}\rightarrow \mathbb{C}$. Analogous statements also apply to $A_\theta$, $B_\eta$ and the rescaled differential operators.
\begin{lemma}[{Rescaling of operators}]
\label{lem: rescaling}
    Let $\delta>0,x\in\Lambda$ and $f:\mathbb{C}\rightarrow\mathbb{C}     \cup\{\pm\infty\}$ be measurable. If $z_{\delta,x}\in\mathcal{D}_f$, then 
    \begin{align*}
    \label{eq: rescaling}
        f(L_{\theta,\eta})z_{\delta,x}&=(f(\delta^{-2\alpha_1}L_{\theta,\eta,\delta,x})z)_{\delta,x},\\
         f(B_\eta)z_{\delta,x}& =(f(\delta^{-2\beta_1}B_{\eta,\delta,x})z)_{\delta,x},\\
         f(A_\theta)z_{\delta,x}&=(f(\delta^{-2\alpha_1}A_{\theta,\delta,x})z)_{\delta,x}.
    \end{align*}
\end{lemma}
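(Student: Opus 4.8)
The plan is to reduce all three identities to a single statement about how the Laplacian, and hence its functional calculus, transforms under the shift-and-scale operator $z\mapsto z_{\delta,x}$ defined in \eqref{eq:shift_scaling_operator}. The key structural fact is that the unitary map $U_{\delta,x}\colon L^2(\Lambda_{\delta,x})\to L^2(\Lambda)$, $(U_{\delta,x}z)(y)=\delta^{-d/2}z(\delta^{-1}(y-x))=z_{\delta,x}(y)$, intertwines the Dirichlet Laplacians: a direct change of variables gives $U_{\delta,x}^{-1}(-\Delta)U_{\delta,x}=\delta^{-2}(-\Delta_{\delta,x})$ on the natural domain, because each spatial derivative picks up a factor $\delta^{-1}$ and the Dirichlet boundary condition on $\partial\Lambda$ corresponds to the Dirichlet boundary condition on $\partial\Lambda_{\delta,x}$. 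Consequently $U_{\delta,x}^{-1}(-\Delta)^{s}U_{\delta,x}=\delta^{-2s}(-\Delta_{\delta,x})^{s}$ for all $s>0$ by uniqueness of the positive fractional power, and more generally $U_{\delta,x}^{-1}\,g(-\Delta)\,U_{\delta,x}=g(\delta^{-2}(-\Delta_{\delta,x}))$ for any measurable $g$, since unitary conjugation commutes with the Borel functional calculus (it maps the resolution of identity $E$ of $-\Delta$ to that of $\delta^{-2}(-\Delta_{\delta,x})$).

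Next I would assemble the operators $L_{\theta,\eta}$, $B_\eta$, $A_\theta$ from these pieces. Each is, by \eqref{eq: opBA} and \eqref{eq: rescaled_operator_gen}, a finite linear combination of fractional powers $(-\Delta)^{\gamma}$, and conjugating term by term shows
\begin{align*}
    U_{\delta,x}^{-1}L_{\theta,\eta}U_{\delta,x}&=\delta^{-2\alpha_1}L_{\theta,\eta,\delta,x},\\
    U_{\delta,x}^{-1}B_{\eta}U_{\delta,x}&=\delta^{-2\beta_1}B_{\eta,\delta,x},\\
    U_{\delta,x}^{-1}A_{\theta}U_{\delta,x}&=\delta^{-2\alpha_1}A_{\theta,\delta,x},
\end{align*}
where the prefactors $\delta^{2\alpha_1-2\alpha_i}$, $\delta^{2\beta_1-2\beta_j}$, $\delta^{2\alpha_1-2\beta_k-2\beta_l}$ in the definitions of the rescaled operators are exactly chosen so that, after extracting the overall factor $\delta^{-2\alpha_1}$ (resp. $\delta^{-2\beta_1}$), every individual term $\delta^{-2\gamma}(-\Delta_{\delta,x})^{\gamma}$ appears with the right coefficient. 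One must check that the domains match — each conjugated operator has domain $U_{\delta,x}^{-1}$ applied to the original domain, which is $\dot H^{2\alpha_1}(\Lambda_{\delta,x})$, $\dot H^{2\beta_1}(\Lambda_{\delta,x})$, $\dot H^{2\alpha_1}(\Lambda_{\delta,x})$ respectively, in agreement with the stated domains.

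Finally, applying the Borel functional calculus to the normal operator identities just obtained, $U_{\delta,x}^{-1}f(L_{\theta,\eta})U_{\delta,x}=f(\delta^{-2\alpha_1}L_{\theta,\eta,\delta,x})$ and likewise for $B_\eta$, $A_\theta$; evaluating at $z$ with $z_{\delta,x}=U_{\delta,x}z\in\mathcal D_f$ and applying $U_{\delta,x}$ to both sides yields precisely the three displayed identities. The domain hypothesis $z_{\delta,x}\in\mathcal D_f$ transfers to $z\in\mathcal D(f(\delta^{-2\alpha_1}L_{\theta,\eta,\delta,x}))$ because $U_{\delta,x}$ is unitary and maps the spectral measure $E_{z,z}$ of the rescaled operator to that of $L_{\theta,\eta}$, so the defining integrability condition is preserved. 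The main obstacle is not conceptual but a matter of bookkeeping: verifying the change-of-variables computation $U_{\delta,x}^{-1}(-\Delta)U_{\delta,x}=\delta^{-2}(-\Delta_{\delta,x})$ carefully on domains (including that the Dirichlet boundary conditions and the fractional-power domains transform correctly), and confirming that the exponent arithmetic in \eqref{eq: rescaled_operator_gen} lines up term by term after factoring out the leading power of $\delta$.
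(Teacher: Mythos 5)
Your proof is correct and follows essentially the same route as the paper: establish the identity at the level of the (fractional) Laplacian by a change of variables, observe that the shift-and-scale map is a unitary (the paper says ``bounded linear operator'') from $L^2(\Lambda_{\delta,x})$ to $L^2(\Lambda)$, and conclude for general measurable $f$ because conjugation by this map commutes with the Borel functional calculus — the paper invokes Schm\"udgen's Proposition 5.15\,(i),(iii) for exactly this last step, which is the spectral-measure argument you spell out. Your write-up is simply more explicit than the paper's three-sentence proof.
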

\begin{proof}[Proof of \Cref{lem: rescaling}]
    Suppose $z \in \dot{H}^{2\alpha_1}(\Lambda_{\delta,x})$ such that $z_{\delta,x}\in \dot{H}^{2\alpha_1}(\Lambda)\subset \mathcal{D}_f$. Then, the claim follows immediately for $f(x)=x$ by differentiating $z_{\delta,x}$ and from the definition of $L_{\theta,\eta,\delta,x}$, see also \cite[Lemma 16]{altmeyer_parameterSemi_2020}. Using (i) and (iii) of \cite[Proposition 5.15]{schmudgenUnboundedSelfadjointOperators2012}, the result can be extended to measurable $f:\mathbb{C}\rightarrow\mathbb{C} \cup\{\pm\infty\}$ by first passing to the associated resolution of the identities of $L_{\theta,\eta}, B_{\eta}, A_{\vartheta}$ and $L_{\theta,\eta,\delta,x}, B_{\eta, \delta, x}, A_{\vartheta,\delta,x}$ respectively, and interpreting the localisation as a bounded linear operator from $L^2(\Lambda_{\delta,x})$ to $L^2(\Lambda)$.
\end{proof}
Throughout the remainder of the paper, we will assume that $L_{\theta,\eta}$, and thus also $L_{\theta,\eta,\delta,x}$, is a positive operator. A sufficient condition for this is given in the next lemma.
\begin{lemma}[{Sufficient condition for positivity}]
    \label{lem: positiveoperator} Let $(e_k)_{k\in\N}$ form an orthonormal basis of $(-\Delta)$ with eigenvalues $\lambda_k\geq c(\Lambda)$ for some constant $c(\Lambda)>0.$ Then, $L_{\theta,\eta}$ is a positive operator if one of the following conditions is satisfied:
    \begin{itemize}
        \item [(i)] \Cref{ass: parameterass} holds, $c(\Lambda)\geq1$ and 
        $$|\theta_1|>\sum_{i=2}^p|\theta_i|+\frac{1}{4}\sum_{k,l=1}^q|\eta_k\eta_l|;$$
        \item [(ii)] \Cref{ass: parameterass} holds, $c(\Lambda)<1$ and 
        $$|\theta_1|>\sum_{i=2}^p|\theta_i|c(\Lambda)^{\alpha_i-\alpha_1}+\frac{1}{4}\sum_{k,l=1}^q|\eta_k\eta_l|c(\Lambda)^{\beta_k+\beta_l-\alpha_1}.$$
    \end{itemize}
\end{lemma}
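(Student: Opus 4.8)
The plan is to diagonalise $L_{\theta,\eta}$ against the orthonormal eigenbasis $(e_n)_{n\in\N}$ of $-\Delta$ and to reduce the assertion to an elementary scalar estimate on its eigenvalues. Since $A_\theta$ and $B_\eta$ are Borel functions of $-\Delta$, each $e_n$ is simultaneously an eigenvector of $A_\theta$, $B_\eta$, $B_\eta^2$ and hence of $L_{\theta,\eta}=-A_\theta-B_\eta^2/4$. Writing $(-\Delta)e_n=\lambda_n e_n$ with $\lambda_n\geq c(\Lambda)>0$, one obtains $L_{\theta,\eta}e_n=\mu_n e_n$ with
\[
\mu_n=-\theta_1\lambda_n^{\alpha_1}-\sum_{i=2}^p\theta_i\lambda_n^{\alpha_i}-\frac14\sum_{k,l=1}^q\eta_k\eta_l\lambda_n^{\beta_k+\beta_l}.
\]
As $L_{\theta,\eta}$ is a normal operator with purely discrete spectrum $\{\mu_n\}_{n\in\N}$, it suffices to verify $\inf_n\mu_n>0$; this yields $\sca{L_{\theta,\eta}z}{z}\geq(\inf_n\mu_n)\norm{z}^2$ for $z$ in the domain, i.e. the claimed (strict) positivity.

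Next I would estimate $\mu_n$ from below by factoring out the leading power $\lambda_n^{\alpha_1}$. Since $\theta_1<0$ by \Cref{ass: parameterass}(i), the leading term equals $|\theta_1|\lambda_n^{\alpha_1}$ and
\[
\mu_n\geq|\theta_1|\lambda_n^{\alpha_1}-\sum_{i=2}^p|\theta_i|\lambda_n^{\alpha_i}-\frac14\sum_{k,l=1}^q|\eta_k\eta_l|\lambda_n^{\beta_k+\beta_l}.
\]
The structural input is that every exponent occurring here is at most $\alpha_1$: indeed $\alpha_i<\alpha_1$ for $i\geq2$, and $\beta_k+\beta_l\leq 2\beta_1\leq\alpha_1$ by \Cref{ass: parameterass}(iii). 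Hence $t\mapsto t^{\alpha_i-\alpha_1}$ and $t\mapsto t^{\beta_k+\beta_l-\alpha_1}$ are non-increasing on $(0,\infty)$, and the bound $\lambda_n\geq c(\Lambda)$ gives $\lambda_n^{\alpha_i-\alpha_1}\leq c(\Lambda)^{\alpha_i-\alpha_1}$ and $\lambda_n^{\beta_k+\beta_l-\alpha_1}\leq c(\Lambda)^{\beta_k+\beta_l-\alpha_1}$; when $c(\Lambda)\geq1$ these powers are in addition bounded by $1$. This produces
\[
\mu_n\geq\lambda_n^{\alpha_1}\Big(|\theta_1|-\sum_{i=2}^p|\theta_i|c(\Lambda)^{\alpha_i-\alpha_1}-\frac14\sum_{k,l=1}^q|\eta_k\eta_l|c(\Lambda)^{\beta_k+\beta_l-\alpha_1}\Big),
\]
with the convention that each power of $c(\Lambda)$ with non-positive exponent is replaced by $1$ in the regime $c(\Lambda)\geq1$.

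Finally I would conclude: under either hypothesis (i) or (ii) the bracketed constant is strictly positive, and since $\lambda_n\geq c(\Lambda)>0$ this forces $\mu_n\geq c(\Lambda)^{\alpha_1}\cdot(\text{bracketed constant})>0$ uniformly in $n$, whence $L_{\theta,\eta}$ is a positive operator; by the spectral identity of \Cref{lem: rescaling} the same then transfers to $L_{\theta,\eta,\delta,x}$. I do not expect a genuine obstacle: the entire argument is a one-line spectral reduction followed by the monotonicity of $t\mapsto t^s$ for $s\leq0$. The only point that warrants a remark is the borderline case $\alpha_1=2\beta_1$, where $\beta_k+\beta_l$ can actually equal $\alpha_1$ (namely for $k=l=1$); there $\lambda_n^{\beta_k+\beta_l-\alpha_1}=1=c(\Lambda)^0$ so the estimate is unaffected, and one notes that the hypotheses already enforce $|\theta_1|>\tfrac14\eta_1^2$, in line with \Cref{ass: parameterass}(iii).
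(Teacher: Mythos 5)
Your argument is correct and coincides with the paper's: both diagonalise $L_{\theta,\eta}$ on the eigenbasis of $-\Delta$, reduce positivity to the scalar inequality $|\theta_1|x^{\alpha_1}-\sum_{i\geq 2}|\theta_i|x^{\alpha_i}-\tfrac14\sum_{k,l}|\eta_k\eta_l|x^{\beta_k+\beta_l}>0$ for $x\geq c(\Lambda)$, and exploit that all competing exponents are $\leq\alpha_1$ so the relevant powers of $x$ are controlled by those of $c(\Lambda)$ (or by $1$ when $c(\Lambda)\geq1$). Your extra observations — that the eigenvalues are uniformly bounded below by $c(\Lambda)^{\alpha_1}$ times the bracketed constant, and the explicit check of the boundary case $\alpha_1=2\beta_1$ — are minor refinements but do not change the proof's route.
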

\begin{proof}[Proof of \Cref{lem: positiveoperator}]
    \Cref{ass: parameterass} states in particular that $\theta_1<0$ and, additionally, $\theta_1+\eta_1^2/4<0$ in case $\alpha_1=2\beta_1.$ It is now enough to show that all eigenvalues of $L_{\theta,\eta}$ are positive, which holds if for all $x\geq c(\Lambda)$:
    \begin{equation}
    \label{eq: positivity}
        |\theta_1|x^{\alpha_1}-\sum_{i=2}^p|\theta_i|x^{\alpha_i}-\frac{1}{4}\sum_{k,l=1}^q|\eta_k\eta_l|x^{\beta_k+\beta_l}>0.
    \end{equation}
    \begin{itemize}
        \item [(i)] If $c(\Lambda)\geq1$, then both $x^{\alpha_i}$ and $x^{\beta_k+\beta_l}$ are bounded by $x^{\alpha_1}$ for any $i\leq p$ and $k,l\leq q,$ thus \eqref{eq: positivity} is satisfied.
        \item [(ii)] If $c(\Lambda)<1$, then $x^{\alpha_i-\alpha_1}\leq c(\Lambda)^{\alpha_i-\alpha_1}$ and $x^{\beta_k+\beta_l-\alpha_1}\leq c(\Lambda)^{\beta_k+\beta_l-\alpha_1}$. \qedhere
    \end{itemize}
\end{proof}
\begin{remark}
\label{rmk: complexOp}
    If $L_{\theta,\eta}$ is not a positive operator and has non-positive eigenvalues, any choice of the operator root is a complex-valued operator. This is particularly the case if the structural damping exceeds the threshold $\alpha_1=2\beta_1$. Consequently, the associated family of $M, N$-functions in the following subsection is again complex-valued. Thus, inner products hereinafter are associated with complex Hilbert spaces.  However, this does not influence the asymptotic results of \Cref{section:asymptotic_results} due to the convergence $L_{\theta,\eta,\delta,x}\rightarrow\bar{L}_{\theta,\eta}$ to a positive limiting operator $\bar L_{\theta,\eta}$.
\end{remark}

\subsection{Properties of generalised cosine and sine operator functions}
\label{sec: cosine}
\begin{lemma}[{Representations $M,N$-functions}]
    \label{lem: MN functions}
    The operators $A_\theta$ and $B_\eta$ defined in \eqref{eq: opBA} generate a family of $M,N$-functions $(M(t), N(t), t \geq 0)$ given by 
    \begin{align}
        \label{eq: M}M(t)&\coloneqq \mathbf{m}_t(L_{\theta,\eta},B_\eta)\coloneqq e^{B_\eta t/2}\big(\cos{(L_{\theta,\eta}^{1/2}t)}-\frac{B_\eta}{2}\sin{(L_{\theta,\eta}^{1/2}t})L_{\theta,\eta}^{-1/2}\big),\quad L^2(\Lambda)\subset\mathcal{D}(N(t)),\\
        \label{eq: N}N(t)&\coloneqq \mathbf{n}_t(L_{\theta,\eta},B_\eta) \coloneqq e^{B_\eta t/2}\sin{(L_{\theta,\eta}^{1/2}t})L_{\theta,\eta}^{-1/2}, \quad L^2(\Lambda)\subset\mathcal{D}(N(t)).
    \end{align}
\begin{proof}[Proof of \Cref{lem: MN functions}]
    Note that by \cite[Theorem 5.9]{schmudgenUnboundedSelfadjointOperators2012} all of the appearing operators $e^{B_\eta t/2}$, $\cos(tL_{\theta,\eta}^{1/2})$, $\sin(tL_{\theta,\eta}^{1/2})$, $B_\eta$ and $L_{\theta,\eta}^{-1/2}$ in \eqref{eq: M} and \eqref{eq: N} are well-defined and even commute on the smallest occurring domain as they are all based on the same underlying Laplace operator. By direct computation, one can now verify that the conditions \textbf{(M1)-(M4)} in \cite[Definition 1.7.2]{melnikova_abstract_2001} are satisfied by $M(t)$ and $N(t)$ from \eqref{eq: M} and \eqref{eq: N} using the functional calculus. 
    \end{proof}
\end{lemma}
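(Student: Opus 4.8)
The plan is to verify that the operator families $M(t)$ and $N(t)$ defined via functional calculus in \eqref{eq: M} and \eqref{eq: N} satisfy the defining axioms \textbf{(M1)--(M4)} of an $M,N$-function pair in the sense of \cite[Definition 1.7.2]{melnikova_abstract_2001}. Since all operators appearing in \eqref{eq: M} and \eqref{eq: N} are Borel functions of the single self-adjoint operator $-\Delta$ (with Dirichlet boundary conditions), the spectral theorem reduces every identity to a pointwise scalar identity on the joint spectrum. First I would make the commutativity and domain bookkeeping precise: by \cite[Theorem 5.9]{schmudgenUnboundedSelfadjointOperators2012}, $e^{tB_\eta/2}$, $\cos(tL_{\theta,\eta}^{1/2})$, $\sin(tL_{\theta,\eta}^{1/2})$, $B_\eta$ and $L_{\theta,\eta}^{-1/2}$ are all normal (self-adjoint, modulo the complex-valued caveat of \Cref{rmk: complexOp}) functions of $-\Delta$ and hence commute on the intersection of their domains; since $L_{\theta,\eta}$ is assumed positive, $L_{\theta,\eta}^{-1/2}$ is bounded, and $\sin(tL_{\theta,\eta}^{1/2})L_{\theta,\eta}^{-1/2}$ is a bounded operator on all of $L^2(\Lambda)$, which gives the domain assertions $L^2(\Lambda)\subset\mathcal{D}(N(t))$ and the analogous statement for $M(t)$.

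Next I would translate the axioms into scalar conditions. Writing $\ell=\ell(\lambda)\coloneqq$ the spectral value of $L_{\theta,\eta}$ and $b=b(\lambda)\coloneqq$ the spectral value of $B_\eta$ at an eigenvalue $\lambda$ of $-\Delta$, the symbols become
\[
m_t(\ell,b)=e^{bt/2}\Bigl(\cos(\sqrt{\ell}\,t)-\tfrac{b}{2}\,\frac{\sin(\sqrt\ell\,t)}{\sqrt\ell}\Bigr),\qquad n_t(\ell,b)=e^{bt/2}\,\frac{\sin(\sqrt\ell\,t)}{\sqrt\ell}.
\]
The conditions \textbf{(M1)--(M4)} amount to: the initial values $m_0=1$, $\partial_t m_0=0$, $n_0=0$, $\partial_t n_0=1$ (immediate from the Taylor expansions of $\cos$ and $\sin$); strong continuity in $t$ (pointwise continuity of $m_t,n_t$ together with a local bound uniform on the spectrum, supplied by $|e^{bt/2}|$ growing at most like the exponential of the top-order damping symbol on the finite interval $[0,T]$ and by $|\sin(\sqrt\ell\,t)/\sqrt\ell|\le t$, then invoke dominated convergence for the spectral integral); and the second-order differential relation $m_t'' = (A_\theta + B_\eta\,\partial_t)\,\text{-symbol}$ acting appropriately, i.e. $\partial_t^2 m_t(\ell,b) = -\ell\, m_t(\ell,b) + b\,\partial_t m_t(\ell,b)$ up to the identification $-\ell + b^2/4 = -($symbol of $A_\theta)$, and likewise $\partial_t^2 n_t(\ell,b) = -\ell\,n_t(\ell,b) + b\,\partial_t n_t(\ell,b)$ together with the cosine/sine coupling $\partial_t n_t = m_t + b\,n_t$ (or whichever normalisation \cite[Definition 1.7.2]{melnikova_abstract_2001} uses). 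Each of these is a one-line trigonometric identity for the scalar symbols, after which the functional calculus lifts it back to operators on the common core $\dot H^{2\alpha_1}(\Lambda)$.

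The only genuinely delicate point is the uniform control needed for strong continuity and for the differential relations when $\beta_1>0$, since then $b(\lambda)=\eta_1\lambda^{\beta_1}+\dots\to-\infty$ and the factor $e^{bt/2}$ is an \emph{unbounded} multiplier as a function of the spectral parameter; however, because $\eta_1<0$ under \Cref{ass: parameterass}, $e^{bt/2}$ is in fact uniformly bounded (by $1$, for all $t\ge 0$, on all but finitely many eigenvalues), so no difficulty arises — the finitely many positive eigenvalues of $B_\eta$ contribute a bounded factor on a finite-dimensional subspace. The second subtlety is that $\partial_t n_t$ and $\partial_t m_t$ must be shown to exist in the strong operator topology and to map into the relevant domains; here one differentiates the spectral integrand and checks the differentiated symbols ($m_t'(\ell,b)=-\ell\, n_t(\ell,b)$ and $n_t'(\ell,b)=m_t(\ell,b)+b\,n_t(\ell,b)$, say) are square-integrable against $\diff E_{z,z}$ for $z$ in the stated domain, which follows from the polynomial growth of $\ell,b$ in $\lambda$ and the regularity $z\in\dot H^{2\alpha_1}$. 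I expect the write-up to be short: list the four scalar identities, note that each follows by elementary calculus, and cite dominated convergence for the spectral integrals to pass from symbols to operators.
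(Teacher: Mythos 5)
Your proof takes essentially the same route as the paper's: note that every operator involved is a Borel function of the single self-adjoint Laplacian (hence normal and mutually commuting, via the same reference \cite[Theorem~5.9]{schmudgenUnboundedSelfadjointOperators2012}), then reduce the axioms \textbf{(M1)--(M4)} to scalar identities on the spectrum and lift back via dominated convergence. The paper compresses all of this into ``by direct computation using the functional calculus''; you are simply filling in what that computation looks like.

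One bookkeeping slip worth correcting: the scalar relations you propose, $\partial_t m_t = -\ell\,n_t$ and $\partial_t^2 m_t = -\ell\,m_t + b\,\partial_t m_t$, are off by a $b^2/4$ term because $\ell$ is the symbol of $L_{\theta,\eta} = -A_\theta - B_\eta^2/4$, not of $-A_\theta$. Direct differentiation of $m_t$ gives $\partial_t m_t = -\bigl(\ell + \tfrac{b^2}{4}\bigr)n_t = a\,n_t$ where $a$ denotes the symbol of $A_\theta$ (so $a = -\ell - b^2/4$, not $\ell - b^2/4$ as your parenthetical ``identification'' states), which matches $M'(t) = A_\theta N(t)$ in the semigroup representation \eqref{eq: semigroup}; the second-order relation is then $\partial_t^2 m_t = a\,m_t + b\,\partial_t m_t$. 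You hedged about the exact normalisation, and once this sign is fixed the scalar verification goes through exactly as you planned.
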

\begin{lemma}[{Self-adjointness of $M,N$-functions}]
\label{lem: self_adjointness_MN}
Assume that $L_{\theta,\eta}$ is a positive operator. Then, the $M, N$-functions defined through \eqref{eq: M} and \eqref{eq: N} are self-adjoint.  
\end{lemma}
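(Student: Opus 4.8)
The plan is to reduce the assertion to a statement about the Borel functional calculus of a single self-adjoint operator, namely the Dirichlet Laplacian $-\Delta$ on $\Lambda$. Write $\beta(\lambda)\coloneqq\sum_{j=1}^q\eta_j\lambda^{\beta_j}$ and $g(\lambda)\coloneqq-\sum_{i=1}^p\theta_i\lambda^{\alpha_i}-\tfrac14\beta(\lambda)^2$, so that $B_\eta=\beta(-\Delta)$ and $L_{\theta,\eta}=g(-\Delta)$; all operators occurring in \eqref{eq: M}--\eqref{eq: N} are functions of $-\Delta$ and hence pairwise commute (cf.\ \cite[Theorem 5.9]{schmudgenUnboundedSelfadjointOperators2012}). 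The positivity hypothesis on $L_{\theta,\eta}$ enters exactly here: it forces $g>0$ on $\sigma(-\Delta)$, so that, with $\phi_t(s)\coloneqq\sin(t\sqrt s)/\sqrt s$ for $s>0$ and $\phi_t(0)\coloneqq t$, the operators $L_{\theta,\eta}^{1/2}=(\sqrt g)(-\Delta)$, $\cos(tL_{\theta,\eta}^{1/2})=(\cos(t\sqrt g))(-\Delta)$ and $\sin(tL_{\theta,\eta}^{1/2})L_{\theta,\eta}^{-1/2}=(\phi_t\circ g)(-\Delta)$ are obtained by applying the functional calculus for $-\Delta$ to \emph{real-valued} Borel functions (likewise $e^{tB_\eta/2}=(e^{t\beta/2})(-\Delta)$).

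Next I would assemble the pieces. By multiplicativity of the functional calculus for commuting functions of $-\Delta$ one gets $M(t)=h_t(-\Delta)$ and $N(t)=k_t(-\Delta)$ with the real-valued Borel symbols
\[
 h_t(\lambda)=e^{t\beta(\lambda)/2}\Bigl(\cos\bigl(t\sqrt{g(\lambda)}\bigr)-\tfrac{\beta(\lambda)}{2}\,\phi_t\bigl(g(\lambda)\bigr)\Bigr),\qquad k_t(\lambda)=e^{t\beta(\lambda)/2}\,\phi_t\bigl(g(\lambda)\bigr).
\]
Since $h_t,k_t$ are real-valued, $M(t)=h_t(-\Delta)$ and $N(t)=k_t(-\Delta)$ are self-adjoint by the standard property $f(S)^*=\bar f(S)$ of the functional calculus of a self-adjoint operator $S$ (cf.\ \cite[Theorem 5.9]{schmudgenUnboundedSelfadjointOperators2012}, \cite[Chapter 13]{rudin_functional_1991}). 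Equivalently, and perhaps more transparently for the write-up, one may diagonalise directly: for an orthonormal eigenbasis $(e_k)_{k\in\N}$ of $-\Delta$ with eigenvalues $\lambda_k>0$ one has $M(t)e_k=h_t(\lambda_k)e_k$ and $N(t)e_k=k_t(\lambda_k)e_k$ with $h_t(\lambda_k),k_t(\lambda_k)\in\R$, so both operators act as real diagonal matrices in this basis and the symmetry $\sca{M(t)z}{w}=\sca{z}{M(t)w}$, $\sca{N(t)z}{w}=\sca{z}{N(t)w}$ follows by expansion.

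The one point that needs a little care — and which I regard as the only mild obstacle — is checking that $M(t)$ and $N(t)$ are genuinely everywhere-defined bounded operators, so that self-adjointness really reduces to the symmetry identity above without domain bookkeeping (and so that the use of multiplicativity incurs no domain caveats). This is where \Cref{ass: parameterass} is used: on $\sigma(-\Delta)$, which is bounded away from zero, $|\phi_t(s)|\leq t$ while $|\beta(\lambda)|\lesssim\sqrt{g(\lambda)}$ as $\lambda\to\infty$ because $2\beta_1\leq\alpha_1$, so that $\beta(\lambda)\,\phi_t(g(\lambda))$ stays bounded; moreover $e^{t\beta(\lambda)/2}$ is bounded since $\beta$ is constant when $\beta_1=0$, while $\eta_1<0$ (whence $\beta(\lambda)\to-\infty$) when $\beta_1>0$. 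Hence $h_t,k_t$ are bounded Borel functions and $M(t),N(t)$ are bounded operators on $L^2(\Lambda)$. No genuine difficulty is anticipated beyond this verification; everything else is routine bookkeeping within the functional calculus.
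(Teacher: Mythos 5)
Your proof is correct and takes essentially the same route as the paper: both reduce $M(t),N(t)$ to applications of real-valued Borel functions of the Dirichlet Laplacian $-\Delta$ (the paper cites \cite[Proposition 5.13, Theorem 5.9]{schmudgenUnboundedSelfadjointOperators2012} for exactly this), so self-adjointness follows from $f(S)^*=\bar f(S)$. The only difference is that you write the symbols $h_t,k_t$ out explicitly and add the (correct) verification that they are bounded on $\sigma(-\Delta)$ under \Cref{ass: parameterass}, whereas the paper treats these points as implicit in \Cref{lem: MN functions}.
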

\begin{proof}[Proof of \Cref{lem: self_adjointness_MN}] 
The unique positive self-adjoint operator root of the positive self-adjoint operator $L_{\vartheta,\eta}$ is well-defined and exists by \cite[Proposition 5.13]{schmudgenUnboundedSelfadjointOperators2012}. Thus, in view of \Cref{lem: MN functions}, the $M, N$-functions can each be interpreted as the applications of a real-valued function to the underlying Laplace operator on a bounded spatial domain. In particular, by \cite[Theorem 5.9]{schmudgenUnboundedSelfadjointOperators2012} the $M,N$-functions are self-adjoint. 
\end{proof}
As we are interested in the effect of $M, N$-functions applied to localised functions, we further define the rescaled $M, N$-functions:
\begin{align}
    \label{eq: Mres}
    M_{\delta,x}(t)&\coloneqq\mathbf{m}_t(\delta^{-2\alpha_1}L_{\theta,\eta,\delta,x},\delta^{-2\beta_1}B_{\eta,\delta,x}),\\
    \label{eq: Nres}
    N_{\delta,x}(t)&\coloneqq\mathbf{n}_t(\delta^{-2\alpha_1}L_{\theta,\eta,\delta,x},\delta^{-2\beta_1}B_{\eta,\delta,x}).
\end{align}
An application of Lemma \ref{lem: rescaling} yields the scaling properties of the $M,N$-functions in analogy to \cite[Lemma 3.1]{altmeyer_nonparametric_2020} and \cite[Lemma 3.1]{ziebell_wave23}:
\begin{equation*}
M(t)z_{\delta,x}=(M_{\delta,x}(t)z)_{\delta,x},\quad N(t)z_{\delta,x}=(N_{\delta,x}(t)z)_{\delta,x}, \quad z\in L^2(\Lambda_{\delta,x}).
\end{equation*}
\begin{lemma}[{Semigroup upper bounds}]
\label{lem: semigroupbound}
Let $0\leq t\leq T\delta^{-2\beta_1}$, $\gamma\geq0$ and $z\in H^{2\lceil\gamma\rceil}(\R^d)$ with compact support in $\bigcap_{x\in\mathcal{J}}\Lambda_{\delta,x}$ and such that there exists a compactly supported function $\tilde{z}\in H^{2\lceil\gamma\rceil+\lceil\alpha_1\rceil}(\R^d)$ with $z=\Delta^{\lceil\alpha_1\rceil}_0\tilde z$. Then, if $\beta_1>0$, we have
\begin{align}
    \sup_{x\in\mathcal{J}}\norm{e^{tB_{\eta,\delta,x}}L_{\theta,\eta,\delta,x}^{-1/2}(-\Delta_{\delta,x})^\gamma z}_{L^2(\Lambda_{\delta,x})}&\lesssim 1\wedge t^{-(\gamma+\lceil\alpha_1\rceil-\alpha_1/2)/\beta_1};\label{eq: semi1bound}\\
    \sup_{x\in\mathcal{J}}\norm{e^{tB_{\eta,\delta,x}}(-\Delta_{\delta,x})^\gamma z}_{L^2(\Lambda_{\delta,x})}&\lesssim 1\wedge t^{-(\gamma+\lceil\alpha_1\rceil)/\beta_1}\label{eq: semi2bound}.
\end{align}
Moreover, in case $\beta_1=0$, the left-hand sides in \eqref{eq: semi1bound} and \eqref{eq: semi2bound} are bounded by a constant independent of $\delta$ and $t$.
\end{lemma}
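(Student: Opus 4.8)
The plan is to transfer the estimate to the fixed domain $\Lambda$ by \Cref{lem: rescaling}, and then read it off from functional calculus for $-\Delta=-\Delta_{\Lambda}$. Write $\mu(\lambda)\coloneqq\sum_{j=1}^{q}\eta_{j}\lambda^{\beta_{j}}$ for the symbol of $B_{\eta}$ and let $\ell(\lambda_{k})>0$ denote the eigenvalues of the (positive) operator $L_{\theta,\eta}$, positivity being available e.g.\ via \Cref{lem: positiveoperator}. A direct computation with the scaling relation $(\Delta_{0}^{m}f)_{\delta,x}=\delta^{2m}\Delta_{0}^{m}(f_{\delta,x})$ shows that, evaluated at $\delta^{2}\lambda$, the symbols of $B_{\eta,\delta,x}$, $L_{\theta,\eta,\delta,x}$ and $-\Delta_{\delta,x}$ equal $\delta^{2\beta_{1}}\mu(\lambda)$, $\delta^{2\alpha_{1}}\ell(\lambda)$ and $\delta^{2}\lambda$. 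Since the rescaling $z\mapsto z_{\delta,x}$ is a unitary map $L^{2}(\Lambda_{\delta,x})\to L^{2}(\Lambda)$ intertwining $-\Delta_{\delta,x}$ with $\delta^{2}(-\Delta)$, it intertwines $e^{tB_{\eta,\delta,x}}L_{\theta,\eta,\delta,x}^{-1/2}(-\Delta_{\delta,x})^{\gamma}$ with $\delta^{2\gamma-\alpha_{1}}e^{sB_{\eta}}L_{\theta,\eta}^{-1/2}(-\Delta)^{\gamma}$, where $s\coloneqq t\delta^{2\beta_{1}}\in[0,T]$. Because $z=\Delta_{0}^{\lceil\alpha_{1}\rceil}\tilde z$ and $\tilde z_{\delta,x}$ is, for $\delta\le\delta'$, supported in a fixed compact subset of $\Lambda$ on which the Dirichlet and full-space Laplacian coincide, one has $z_{\delta,x}=\pm\,\delta^{2\lceil\alpha_{1}\rceil}(-\Delta)^{\lceil\alpha_{1}\rceil}\tilde z_{\delta,x}$. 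Collecting powers of $\delta$ and using $t^{-c/\beta_{1}}=s^{-c/\beta_{1}}\delta^{2c}$, assertions \eqref{eq: semi1bound} and \eqref{eq: semi2bound} become, with $a\coloneqq\gamma+\lceil\alpha_{1}\rceil-\alpha_{1}/2>0$, $b\coloneqq\gamma+\lceil\alpha_{1}\rceil>0$ and all norms in $L^{2}(\Lambda)$,
\[\bigl\|e^{sB_{\eta}}L_{\theta,\eta}^{-1/2}(-\Delta)^{\gamma+\lceil\alpha_{1}\rceil}\tilde z_{\delta,x}\bigr\|\lesssim\delta^{-2a}\wedge s^{-a/\beta_{1}},\qquad\bigl\|e^{sB_{\eta}}(-\Delta)^{\gamma+\lceil\alpha_{1}\rceil}\tilde z_{\delta,x}\bigr\|\lesssim\delta^{-2b}\wedge s^{-b/\beta_{1}},\]
uniformly in $x\in\mathcal{J}$, $\delta\le\delta'$ and $s\in[0,T]$ (the second term on each right-hand side being $+\infty$ when $s=0$).

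Next I would record uniform scaled Sobolev bounds for the localised profiles. Since $\tilde z\in H^{2\lceil\gamma\rceil+\lceil\alpha_{1}\rceil}(\R^{d})$ and $z=\Delta_{0}^{\lceil\alpha_{1}\rceil}\tilde z\in H^{2\lceil\gamma\rceil}(\R^{d})$ are compactly supported in the interior of $\Lambda_{\delta,x}$, the identity $(-\Delta_{0})^{m}(f_{\delta,x})=\delta^{-2m}((-\Delta_{0})^{m}f)_{\delta,x}$, the $L^{2}$-isometry $f\mapsto f_{\delta,x}$ and the coincidence of Dirichlet and full-space powers on interior-supported functions give $\sup_{x\in\mathcal{J}}\|(-\Delta)^{m}z_{\delta,x}\|\lesssim\delta^{-2m}$ for integers $m\le\lceil\gamma\rceil$ and $\sup_{x\in\mathcal{J}}\|(-\Delta)^{m}\tilde z_{\delta,x}\|\lesssim\delta^{-2m}$ for integers $m\le\lceil\gamma\rceil+\lceil\alpha_{1}\rceil/2$. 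Writing $(-\Delta)^{m}\tilde z_{\delta,x}=\pm\,\delta^{-2\lceil\alpha_{1}\rceil}(-\Delta)^{m-\lceil\alpha_{1}\rceil}z_{\delta,x}$ extends the second bound to integers $m\le\lceil\gamma\rceil+\lceil\alpha_{1}\rceil$, and log-convexity of $m\mapsto\log\|(-\Delta)^{m}\tilde z_{\delta,x}\|$ (respectively of $m\mapsto\log\|(-\Delta)^{m}z_{\delta,x}\|$) removes the integrality restriction. In particular $\|\tilde z_{\delta,x}\|=\|\tilde z\|\lesssim1$, and, as $a,b\le\lceil\gamma\rceil+\lceil\alpha_{1}\rceil$, also $\sup_{x\in\mathcal{J}}\|(-\Delta)^{a}\tilde z_{\delta,x}\|\lesssim\delta^{-2a}$ and $\sup_{x\in\mathcal{J}}\|(-\Delta)^{b}\tilde z_{\delta,x}\|\lesssim\delta^{-2b}$.

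The two displayed bounds then follow from functional calculus on $-\Delta$, whose spectrum $\{\lambda_{k}\}$ lies in $[\lambda_{1},\infty)$ with $\lambda_{1}=\lambda_{1}(\Lambda)>0$. By \Cref{ass: parameterass}, $\ell(\lambda)/\lambda^{\alpha_{1}}\to-\theta_{1}>0$ if $\alpha_{1}>2\beta_{1}$ and $\to-(\theta_{1}+\eta_{1}^{2}/4)>0$ if $\alpha_{1}=2\beta_{1}$, whence $\ell(\lambda)^{-1/2}\lambda^{\gamma+\lceil\alpha_{1}\rceil}\lesssim\lambda^{a}$ on the spectrum; moreover $\mu(\lambda)\to-\infty$ since $\eta_{1}<0,\ \beta_{1}>0$, so $\mu^{\ast}\coloneqq\sup_{k}\mu(\lambda_{k})<\infty$ and $\mu(\lambda)\le-c_{0}\lambda^{\beta_{1}}$ for all $\lambda$ outside a fixed bounded set, with some $c_{0}>0$; consequently $\sup_{s\in[0,T]}\|e^{sB_{\eta}}\|<\infty$. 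For the $\delta^{-2a}$-bound, this uniform boundedness of $e^{sB_{\eta}}$ and the symbol estimate $\ell(\lambda)^{-1/2}\lambda^{\gamma+\lceil\alpha_{1}\rceil}\lesssim\lambda^{a}$ yield $\|e^{sB_{\eta}}L_{\theta,\eta}^{-1/2}(-\Delta)^{\gamma+\lceil\alpha_{1}\rceil}\tilde z_{\delta,x}\|\lesssim\|(-\Delta)^{a}\tilde z_{\delta,x}\|\lesssim\delta^{-2a}$, and analogously with $b$; this part uses no smoothing and therefore already settles the case $\beta_{1}=0$ (where $q=1$, $B_{\eta}=\eta_{1}I$ and $s=t\le T$). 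For the $s^{-a/\beta_{1}}$-bound I keep only $\|\tilde z_{\delta,x}\|\lesssim1$ and dominate the combined symbol $e^{s\mu(\lambda)}\ell(\lambda)^{-1/2}\lambda^{\gamma+\lceil\alpha_{1}\rceil}$: over the finitely many $\lambda_{k}$ in the exceptional bounded set it is at most $\sup_{s\le T}\|e^{sB_{\eta}}\|\cdot\max_{k}\ell(\lambda_{k})^{-1/2}\lambda_{k}^{\gamma+\lceil\alpha_{1}\rceil}\lesssim s^{-a/\beta_{1}}$ (using $s\le T$), while over the remaining $\lambda_{k}$ it is $\lesssim e^{-c_{0}s\lambda_{k}^{\beta_{1}}}\lambda_{k}^{a}\lesssim s^{-a/\beta_{1}}$ by the elementary identity $\sup_{y>0}e^{-cy^{\beta_{1}}}y^{a}=C_{a,\beta_{1}}\,c^{-a/\beta_{1}}$; the same argument with $b$ replacing $a$ gives the second bound. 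Undoing $s=t\delta^{2\beta_{1}}$ and restoring the powers of $\delta$ yields \eqref{eq: semi1bound} and \eqref{eq: semi2bound}, with constants depending only on $\theta,\eta,T,\Lambda,\mathcal{J}$ and the fixed profile.

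The step I expect to demand the most care is the reduction in the first paragraph: keeping track of the three distinct powers of $\delta$ produced by rescaling $e^{tB_{\eta,\delta,x}}$, $L_{\theta,\eta,\delta,x}^{-1/2}$ and $(-\Delta_{\delta,x})^{\gamma}$ separately, and recognising that the structural hypothesis $z=\Delta_{0}^{\lceil\alpha_{1}\rceil}\tilde z$ together with $z\in H^{2\lceil\gamma\rceil}$ is exactly what raises the scaled Sobolev regularity of $\tilde z_{\delta,x}$ to order $2(\lceil\gamma\rceil+\lceil\alpha_{1}\rceil)$ and so absorbs the $\delta^{-\alpha_{1}}$ blow-up of $L_{\theta,\eta,\delta,x}^{-1/2}$ (visible already in the case $t=0$). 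Once this is set up the analytic estimates are routine, the crucial feature being that $e^{sB_{\eta}}$ is a uniformly bounded semigroup on $[0,T]$ precisely because $\eta_{1}<0$.
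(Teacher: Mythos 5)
Your proof is correct and takes a genuinely different route from the paper's. The paper stays on the dilated domain $\Lambda_{\delta,x}$: it shifts $B_{\eta,\delta,x}$ by a constant $\delta^{2\beta_1}C_1$ so that the shifted operator is dominated by the leading-order term $\tfrac12\eta_1(-\Delta_{\delta,x})^{\beta_1}$, which generates a contraction semigroup, and then invokes the standard smoothing estimate $\norm{e^{t\eta_1(-\Delta_{\delta,x})^{\beta_1}/2}(-\Delta_{\delta,x})^{\gamma+\lceil\alpha_1\rceil}\tilde z}\lesssim 1\wedge t^{-(\gamma+\lceil\alpha_1\rceil)/\beta_1}$ for this fixed-order dissipative operator, finishing with an interpolation argument to control $\sup_x\norm{(-\Delta_{\delta,x})^\gamma z}$. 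You instead use \Cref{lem: rescaling} to transport the whole expression back to the fixed domain $\Lambda$, keeping explicit book on the three powers of $\delta$ produced by $e^{tB_{\eta,\delta,x}}$, $L_{\theta,\eta,\delta,x}^{-1/2}$ and $(-\Delta_{\delta,x})^{\gamma}$, and then bound the resulting scalar symbol $e^{s\mu(\lambda)}\ell(\lambda)^{-1/2}\lambda^{\gamma+\lceil\alpha_1\rceil}$ directly on the spectrum of $-\Delta$ on $\Lambda$, which is bounded away from zero. The finitely many exceptional eigenvalues (where the asymptotic bound $\mu(\lambda)\le -c_0\lambda^{\beta_1}$ may fail) play the role of the paper's constant shift $C_1$. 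Both proofs hinge on the same two structural inputs — $z=\Delta_0^{\lceil\alpha_1\rceil}\tilde z$ to buy the extra $\lceil\alpha_1\rceil$ orders of smoothing, and dominance of $B_\eta$ by its leading term for large frequencies — but yours makes all $\delta$-powers fully explicit (which is a useful sanity check against the precise exponent in the statement) and replaces the abstract analytic-semigroup smoothing inequality by the elementary scalar bound $\sup_{y>0}e^{-cy^{\beta_1}}y^a=C\,c^{-a/\beta_1}$. A minor point worth tightening up: when verifying $\sup_{x}\norm{(-\Delta)^{a}\tilde z_{\delta,x}}\lesssim\delta^{-2a}$ via log-convexity, it is cleanest to interpolate between the two endpoints $m_1=0$ and $m_2=\lceil\gamma\rceil+\lceil\alpha_1\rceil$ (the former trivial, the latter via $(-\Delta)^{m_2}\tilde z_{\delta,x}=\pm\delta^{-2\lceil\alpha_1\rceil}(-\Delta)^{\lceil\gamma\rceil}z_{\delta,x}$), since the integers in between covered by your two separate bounds do not actually form a contiguous range when $\lceil\alpha_1\rceil/2>\lceil\gamma\rceil+1$; both $a$ and $b$ lie strictly between these two endpoints, so the single interpolation suffices.
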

\begin{proof}[Proof of \Cref{lem: semigroupbound}]
The key idea of the proof is that all involved operators emerge as an application of the functional calculus applied to the same Laplace operator. In particular, they are simultaneously diagonalisable through the same eigenfunctions. Note that in contrast to the eigenfunctions, the associated eigenvalues do not depend themselves on the shift, i.e. $x \in \Lambda$, within the rescaling of the Laplace operator.  

Let $\beta_1>0.$ We only prove \eqref{eq: semi2bound}, since the argument for \eqref{eq: semi1bound} is similar, using additionally that $L_{\theta,\eta,\delta,x}$ commutes with $(-\Delta_{\delta,x})^\gamma$, $\operatorname{ord}(L_{\theta,\eta,\delta,x})=2\alpha_1$ and a bound of $L_{\theta,\eta,\delta,x}$ in terms of its leading term $(-\Delta_{\delta,x})^{\alpha_1}.$ Let $(e_k)_{k\in\N}$ form an orthonormal basis of $(-\Delta)$ in $L^2(\Lambda)$ with eigenvalues $\lambda_k>0$. Then, there exists a constant $c(\Lambda)$ such that $\lambda_k\geq c(\Lambda)$ for all $k\geq1$, see \cite[Proposition 5.2 and Corollary 5.3]{shimakura_partial_1992}. 
We consider the most involved case, that is, $\eta_1<0$ and $\eta_2,\dots,\eta_q>0$. Consequently, $B_\eta$ will, in general, not be a negative operator, but there exists $y_0>0$ such that for all $y\geq y_0$, we have
    $$\eta_1y^{\beta_1}+\sum_{j=2}^q\eta_jy^{\beta_j}\leq \frac{\eta_1y^{\beta_1}}{2},$$
    and all but finitely many eigenvalues of $B_\eta$ will be negative due to  \Cref{ass: parameterass} (ii). Consider the polynomial
    $$P_\eta(y)\coloneqq\frac{\eta_1}{2}y^{\beta_1}+\sum_{j=2}^q\eta_jy^{\beta_j}$$
    and define $C_1\coloneqq\max_{y\in[c(\Lambda),y_0]}|P_\eta(y)|$. Then $$\eta_1y^{\beta_1}+\sum_{j=2}^q\eta_jy^{\beta_j}-C_1\leq \frac{\eta_1y^{\beta_1}}{2}$$
    holds for all $y\geq c(\Lambda)$, and all eigenvalues of the operator $B_\eta-C_1$ are negative and upper bounded by $\eta_1c(\Lambda)/2$. Analogously, $(e_{k,\delta,x})_{k\in\N}$ forms an orthonormal basis of $(-\Delta_{\delta,x})$ in $L^2(\Lambda_{\delta,x})$ with eigenvalues $\lambda_{k,\delta,x} =\delta^2\lambda_k\geq \delta^2c(\Lambda)$. Similar calculations imply that 
    \begin{equation}
        \eta_1y^{\beta_1}+\sum_{j=2}^q\eta_j\delta^{2(\beta_1-\beta_j)}y^{\beta_j}-\delta^{2\beta_1}C_1\leq \frac{\eta_1y^{\beta_1}}{2},\quad y\geq \delta^2c(\Lambda).
    \end{equation}
    Thus, all eigenvalues of the operator difference $B_{\eta,\delta,x}-\delta^{2\beta_1}C_1$ are negative and the difference is bounded by the differential operator $\eta_1(-\Delta_{\delta,x})^{\beta_1}/2$ in the sense that 
    $$\norm{e^{t(B_{\eta,\delta,x}-\delta^{2\beta_1}C_1)}w}_{L^2(\Lambda_{\delta,x})}\leq \norm{e^{t\eta_1(-\Delta_{\delta,x})^{\beta_1}/2}w}_{L^2(\Lambda_{\delta,x})},\quad w\in L^2(\Lambda_{\delta,x}),$$
    independent of $x\in\mathcal{J}.$ Note further that $z=\Delta_0^{\lceil\alpha_1\rceil}\tilde z=\Delta_{\delta,x}^{\lceil\alpha_1\rceil}\tilde z$ since $z$ is compactly supported in $\bigcap_{x\in\mathcal{J}}\Lambda_{\delta,x}$.
    With that, we have all the ingredients to prove \eqref{eq: semi2bound}. For $0\leq t\leq T\delta^{-2\beta_1}$, we obtain
    \begin{align*}
        \sup_{x\in\mathcal{J}}\norm{e^{tB_{\eta,\delta,x}}(-\Delta_{\delta,x})^\gamma z}_{L^2(\Lambda_{\delta,x})}&=\sup_{x\in\mathcal{J}}\norm{e^{t\delta^{2\beta_1}C_1}e^{t(B_{\eta,\delta,x}-\delta^{2\beta_1}C_1)}(-\Delta_{\delta,x})^\gamma z}_{L^2(\Lambda_{\delta,x})}\\
        &\leq e^{C_1T}\sup_{x\in\mathcal{J}}\norm{e^{t(B_{\eta,\delta,x}-\delta^{2\beta_1}C_1)}(-\Delta_{\delta,x})^\gamma (-\Delta_{\delta,x})^{\lceil\alpha_1\rceil}\tilde z}_{L^2(\Lambda_{\delta,x})}\\
        &\leq e^{C_1T}\sup_{x\in\mathcal{J}}\norm{e^{t\eta_1(-\Delta_{\delta,x})^{\beta_1}/2}(-\Delta_{\delta,x})^{\gamma+\lceil\alpha_1\rceil} \tilde z}_{L^2(\Lambda_{\delta,x})}\\
        &\lesssim (1\wedge t^{-(\gamma+\lceil\alpha_1\rceil)/\beta_1})\sup_{x\in\mathcal{J}}\big(\norm{\tilde z}_{L^2(\Lambda_{\delta,x})}+\norm{(-\Delta_{\delta,x})^{\gamma} z}_{L^2(\Lambda_{\delta,x})}\big),
    \end{align*}
    where the last line follows from the fact that $(-\Delta_{\delta,x})^{\beta_1}\eta_1/2$ generates a contraction semigroup and the smoothing property of semigroups. As $\Delta^n_{\delta,x}z=\Delta^n_0z$ holds for any $x\in\mathcal{J}$ and $1\leq n\leq\lceil\gamma\rceil$, an application of the functional calculus yields
    \begin{align*}
        \sup_{x\in\mathcal{J}}\norm{(-\Delta_{\delta,x})^{\gamma}z}_{L^2(\Lambda_{\delta,x})}^2&\leq \sup_{x\in\mathcal{J}}\norm{(-\Delta_{\delta,x})^{\lceil\gamma\rceil}z}_{L^2(\R^d)}^2+\sup_{x\in\mathcal{J}}\norm{(-\Delta_{\delta,x})^{\lfloor\gamma\rfloor}z}_{L^2(\R^d)}^2\\
        &=\norm{(-\Delta_0)^{\lceil\gamma\rceil}z}_{L^2(\R^d)}^2+\norm{(-\Delta_0)^{\lfloor\gamma\rfloor}z}_{L^2(\R^d)}^2\\
        &<\infty,
    \end{align*}
    proving the assertion. The claim for $\beta_1=0$ follows directly by bounding $$\lvert e^{tB_{\eta,\delta,x}}\rvert=\lvert e^{t\eta_1}\rvert\leq C,\quad 0\leq t\leq T,$$
    for some constant $C$ only depending on $\eta_1$ and $T$.
\end{proof}

The theory of cosine operator functions was developed by \cite{sova1966cosine} and led to general deterministic solution theory for undamped second-order abstract Cauchy problems. By substituting the time derivative as its own variable, it is possible to rewrite a second-order abstract Cauchy problem as a first-order abstract Cauchy problem in two components. The associated strongly continuous semigroup then lives on a product of Hilbert spaces called the phase-space; see \cite[Chapter 3.14]{arendtVectorvaluedLaplaceTransforms2001}, \cite{pandolfiSystemsPersistentMemory2021a} and \cite[Chapter 0.3]{melnikova_abstract_2001}. The same remains true under suitable assumptions on the elastic and dissipation operator within a damped abstract second-order Cauchy problem \cite[Chapter 1.7]{melnikova_abstract_2001}.
\begin{lemma}[{Semigroup on the phase-space}]
    \label{lem: generated semigroup}
   The operator $\mathcal{A}_{\theta,\eta}$, defined through
    \begin{equation*}
        \mathcal{A}_{\theta,\eta}\coloneqq\begin{pmatrix}
            0&I\\
            A_\theta&B_\eta
        \end{pmatrix}, \quad D(\mathcal{A_{\vartheta,\eta}})=\dot H^{2\alpha_1}(\Lambda) \times \dot H^{2\beta_1}(\Lambda),
    \end{equation*}
    generates a $C_0$-semigroup $(J_{\theta,\eta}(t))_{t\geq0}$ on the phase-space $\dot H^{\alpha_1}(\Lambda) \times L^{2}(\Lambda)$ given by 
    \begin{equation}
    \label{eq: semigroup}
        J_{\theta,\eta}(t)\coloneqq\begin{pmatrix}
        M(t)&N(t)\\
        A_\theta N(t)&M(t)+B_\eta N(t)
    \end{pmatrix}=\begin{pmatrix}
        M(t)&N(t)\\
        M^\prime(t)&N^\prime(t)
    \end{pmatrix}, \quad t \geq 0,
    \end{equation}
    with $M(t)$ and $N(t)$ given in \eqref{eq: M} and \eqref{eq: N}, respectively.
\end{lemma}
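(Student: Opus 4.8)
The plan is to verify that $\mathcal{A}_{\theta,\eta}$ is indeed the generator of the claimed semigroup by reducing everything to scalar functional calculus, exactly as was done for the $M,N$-functions themselves in \Cref{lem: MN functions}. First I would recall from \cite[Definition 1.7.2 and the surrounding discussion in Chapter 1.7]{melnikova_abstract_2001} that, whenever a pair $(M(t),N(t))$ satisfies conditions \textbf{(M1)--(M4)} — which was precisely the content of \Cref{lem: MN functions} — the associated damped second-order Cauchy problem $\ddot u = A_\theta u + B_\eta \dot u$ is well-posed on the phase-space, and the solution propagator is given in the block form \eqref{eq: semigroup}. Thus the statement is essentially a translation of \Cref{lem: MN functions} into semigroup language, and the work is in checking (a) that the block operator matrix \eqref{eq: semigroup} is a $C_0$-semigroup on $\dot H^{\alpha_1}(\Lambda)\times L^2(\Lambda)$, and (b) that its generator is $\mathcal{A}_{\theta,\eta}$ with the stated domain.

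For (a), the key point is that all four entries of $J_{\theta,\eta}(t)$ are bounded operators on the respective spaces and depend strongly continuously on $t$. Since $L_{\theta,\eta}$ and $B_\eta$ are built from the single self-adjoint operator $-\Delta$ via functional calculus, each of $M(t)$, $N(t)$, $A_\theta N(t)$ and $M(t)+B_\eta N(t) = N'(t)$ is of the form $g_t(-\Delta)$ for an explicit measurable function $g_t$; I would check that $g_t$ maps the relevant scale of domains boundedly (e.g. $N(t)\colon L^2 \to \dot H^{\alpha_1}$ is bounded because $\sin(L_{\theta,\eta}^{1/2}t)L_{\theta,\eta}^{-1/2}$ carries an extra smoothing of order $\alpha_1$ after accounting for $e^{B_\eta t/2}$ — here \Cref{ass: parameterass}(iii), $\alpha_1\ge 2\beta_1$, is what makes the oscillatory factor dominate the damping factor), and that $\sup_{t\in[0,T']}\norm{g_t}_\infty<\infty$ on compact time intervals. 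Strong continuity then follows from dominated convergence in the spectral integral. The semigroup property $J_{\theta,\eta}(t+s)=J_{\theta,\eta}(t)J_{\theta,\eta}(s)$ can either be read off from the trigonometric addition formulas for $M,N$ (which are exactly conditions \textbf{(M1)--(M4)}) or, more cleanly, deduced from the fact that \eqref{eq: semigroup} solves the phase-space Cauchy problem, whose solution is unique.

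For (b), I would compute the generator directly from the definition: for $(u_0,v_0)^\top$ in the claimed domain, differentiate $J_{\theta,\eta}(t)(u_0,v_0)^\top$ at $t=0$. Using $M(0)=I$, $N(0)=0$, $M'(0)=0$ and $N'(0)=I$ — all immediate from the functional-calculus representation — together with $M''(t)=A_\theta M(t)+B_\eta M'(t)$ and the analogous identity for $N$ (which follow from the scalar ODE $\ddot m = -\lambda m$-type relations, i.e. from \textbf{(M1)--(M4)}), the derivative at $0$ is $(v_0, A_\theta u_0 + B_\eta v_0)^\top = \mathcal{A}_{\theta,\eta}(u_0,v_0)^\top$. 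One must then argue that the maximal domain of the generator is no larger than $\dot H^{2\alpha_1}(\Lambda)\times\dot H^{2\beta_1}(\Lambda)$; this comes from the fact that membership in the generator's domain forces the second component of $\mathcal{A}_{\theta,\eta}(u_0,v_0)^\top = A_\theta u_0 + B_\eta v_0$ to lie in $L^2(\Lambda)$ and the first component $v_0$ to lie in the phase-space $\dot H^{\alpha_1}$, which together with $D(A_\theta)=\dot H^{2\alpha_1}$, $D(B_\eta)=\dot H^{2\beta_1}$ and $2\beta_1\le\alpha_1$ pins the domain down exactly.

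I expect the main obstacle to be the bookkeeping in step (a): verifying that each block is bounded on the correct phase-space scale really hinges on the interplay between the smoothing factor $e^{B_\eta t/2}$ and the oscillatory factors $\cos(L_{\theta,\eta}^{1/2}t)$, $\sin(L_{\theta,\eta}^{1/2}t)L_{\theta,\eta}^{-1/2}$, and the borderline case $\alpha_1 = 2\beta_1$ (where \Cref{ass: parameterass}(iii) additionally requires $\theta_1+\eta_1^2/4<0$ so that $L_{\theta,\eta}$ stays positive and the square roots are genuine, cf. \Cref{lem: positiveoperator} and \Cref{rmk: complexOp}) needs a slightly more careful scalar estimate. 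Once that is in hand, everything else is a routine consequence of the functional calculus together with the already-established \Cref{lem: MN functions}, and the identification of the generator is a short computation.
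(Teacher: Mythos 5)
Your proposal is correct and takes essentially the same route as the paper: the paper's proof likewise verifies the differential properties of the $M,N$-functions from \cite[p.~131]{melnikova_abstract_2001} via functional calculus, after a brief motivational citation of \cite{chen_damping_1989} for the strictly-negative special case. What you spell out in your steps (a) and (b) — boundedness and strong continuity of the blocks on the phase-space, the semigroup law from the addition identities encoded in \textbf{(M1)--(M4)}, and identification of the generator by differentiating at $t=0$ using $M(0)=I$, $N(0)=0$, $M'(0)=0$, $N'(0)=I$, $M'=A_\theta N$, $N'=M+B_\eta N$ — is exactly the content of that "direct verification", just made explicit rather than delegated to the reference. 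Your remark that the generator's domain is really pinned to $\dot H^{2\alpha_1}(\Lambda)\times\dot H^{\alpha_1}(\Lambda)$ (so the $\dot H^{2\beta_1}$ in the second slot coincides with this only at the boundary $\alpha_1=2\beta_1$) is a fair observation about the statement, but it does not affect the argument.
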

\begin{proof}[Proof of \Cref{lem: generated semigroup}]
    It is well-known that in the special case where both $A_\theta$ and $B_\eta$ are strictly negative operators $\mathcal{A}_{\theta,\eta}$ generates a $C_0$-semigroup, which is even analytic if and only if $\alpha_1/2\leq\beta_1\leq\alpha_1,$ cf. \cite{chen_damping_1989}. On the other hand, $(J_{\theta,\eta}(t))_{t\geq0}$ given by \eqref{eq: semigroup} is indeed a semigroup generated by $\mathcal{A}_{\theta,\eta}$ which follows by direct verification of the differential properties of $M,N$-functions in \cite[p. 131]{melnikova_abstract_2001} using the functional calculus.
\end{proof}
The coupled second-order system \eqref{eq: genspde} can also be written as a first-order system 
\begin{equation*}
\diff X(t)=\mathcal{A}_{\theta,\eta}X(t)\diff t+\begin{pmatrix}
    0\\
    I
\end{pmatrix}\diff W(t), \quad 0< t\leq T,
\end{equation*}
for $X(t)=(u(t),v(t))^\top$ and the matrix-valued differential operator $\mathcal{A}_{\theta,\eta}$ generating the strongly continuous semigroup $(J_{\theta,\eta}(t))_{t\geq0}$ constituted by the $M,N$-functions defined in \Cref{lem: MN functions}.
The $M, N$-functions correspond to the cosine and sine functions in the undamped wave equation, see \cite[Chapter 3]{arendtVectorvaluedLaplaceTransforms2001}. Naturally, they appear in the solution to the stochastic partial differential equation \eqref{eq: genspde}:
\begin{align}
\label{eq: solution}
    \begin{pmatrix}
        u(t)\\
        v(t)
    \end{pmatrix}&=J_{\theta,\eta}(t)\begin{pmatrix}u(0)\\
    v(0)
    \end{pmatrix}+\int_0^tJ_{\theta,\eta}(t-s)\begin{pmatrix}
        0\\
        I
    \end{pmatrix}\diff W(s)\\
    &=\begin{pmatrix}
        M(t)u_0+N(t)v_0\\
        M^\prime(t)u_0+N^\prime(t)v_0
    \end{pmatrix}+\begin{pmatrix}
        \int_0^tN(t-s)\diff W(s)\\
        \int_0^tN^\prime(t-s)\diff W(s)
    \end{pmatrix}.\nonumber
\end{align}
\subsection{Asymptotic properties of local measurements}
\label{sec: asymptotic}
In this section, we study the asymptotic covariance structure of the local measurements, which is crucial in showing the convergence of the observed Fisher information matrix $\mathcal{I}_\delta.$ 
\label{section:asymptotic_results}
    \begin{lemma}[{Covariance structure}]
\label{lemma:covariance_structure}
Assume that $(u_0,v_0)^\top=(0,0)^\top.$ For any $t,s \in [0,T]$, $x\in \Lambda$, $1 \leq i,j\leq p$, $1 \leq k,l\leq q$, the covariance between local measurements is given by 
\begin{align*}
        &\mathrm{Cov}(u_{\delta,x}^{\Delta_i}(t), u_{\delta,x}^{\Delta_j}(s)) \\
        &\qquad= \delta^{-2\alpha_i-2\alpha_j}\int_0^{t \land s} \langle N_{\delta,x}(t-r)(-\Delta_{\delta,x})^{\alpha_i} K,N_{\delta,x}(s-r)(-\Delta_{\delta,x})^{\alpha_j} K \rangle_{L^{2}(\Lambda_{\delta,x})}\mathrm{d}r,\nonumber\\
        &\mathrm{Cov}(v_{\delta,x}^{\Delta_k}(t), v_{\delta,x}^{\Delta_l}(s))\\ &\qquad= \delta^{-2\beta_k-2\beta_l}\int_0^{t \land s} \sca{N^\prime_{\delta,x}(t-r)(-\Delta_{\delta,x})^{\beta_k}K}{N^\prime_{\delta,x}(s-r)(-\Delta_{\delta,x})^{\beta_l}K}_{L^2(\Lambda_{\delta,x})}\diff r,\nonumber\\
        &\mathrm{Cov}(u_{\delta,x}^{\Delta_i}(t), v_{\delta,x}^{\Delta_k}(s))\\
        &\qquad=\delta^{-2\alpha_i-2\beta_k}\int_{0}^{t \land s}\langle N_{\delta,x}(t-r)(-\Delta_{\delta,x})^{\alpha_i} K,N^\prime_{\delta,x}(s-r)(-\Delta_{\delta,x})^{\beta_k}K\rangle_{L^{2}(\Lambda_{\delta,x})}\mathrm{d}r.\nonumber
\end{align*}
\end{lemma}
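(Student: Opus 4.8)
The plan is to combine the mild-solution representation \eqref{eq: solution} with the It\^o isometry, and then to move the generalised cosine and sine operators off the driving noise and onto the localising kernels via self-adjointness and the rescaling identities already established. Since $(u_0,v_0)^\top=(0,0)^\top$, \eqref{eq: solution} reduces to the stochastic convolutions $u(t)=\int_0^tN(t-r)\diff W(r)$ and $v(t)=\int_0^tN^\prime(t-r)\diff W(r)$, read as $L^2(\Lambda)$-valued processes, with $N^\prime(t)=M(t)+B_\eta N(t)$ from \eqref{eq: semigroup}. These are well-defined because on the bounded domain $\Lambda$ the operators $N(t),N^\prime(t)$ are bounded with operator norm locally bounded in $t$ (positivity of $L_{\theta,\eta}$ makes $L_{\theta,\eta}^{-1/2}$ bounded, $\cos$ and $\sin$ are bounded by one, and $(e^{B_\eta t/2})_{t\geq0}$ is a $C_0$-semigroup), so $\int_0^T\norm{N(t-r)g}^2\diff r<\infty$ for every $g\in L^2(\Lambda)$ and likewise for $N^\prime$; moreover $(-\Delta)^{\alpha_i}K_{\delta,x},(-\Delta)^{\beta_k}K_{\delta,x}\in L^2(\Lambda)$ by the regularity imposed on $K$, and $(-\Delta)^{\beta_k}K_{\delta,x}\in D(B_\eta)$ since $2\alpha_1\geq2\beta_1\geq\beta_1+\beta_k$.

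I would then pair these processes with the test functions $(-\Delta)^{\alpha_i}K_{\delta,x}$ and $(-\Delta)^{\beta_k}K_{\delta,x}$. By \Cref{lem: self_adjointness_MN} the operator $N(t-r)$ is self-adjoint, and $N^\prime(t-r)=M(t-r)+B_\eta N(t-r)$ is self-adjoint as well, being a real-valued Borel function of $-\Delta$ through the functional calculus. Consequently
\[
u_{\delta,x}^{\Delta_i}(t)=\int_0^t\sca{N(t-r)(-\Delta)^{\alpha_i}K_{\delta,x}}{\diff W(r)},\qquad v_{\delta,x}^{\Delta_k}(t)=\int_0^t\sca{N^\prime(t-r)(-\Delta)^{\beta_k}K_{\delta,x}}{\diff W(r)},
\]
as scalar stochastic integrals against the cylindrical Wiener process $W$, and the covariance identity $\mathrm{Cov}\big(\int_0^t\sca{f(r)}{\diff W(r)},\int_0^s\sca{h(r)}{\diff W(r)}\big)=\int_0^{t\wedge s}\sca{f(r)}{h(r)}_{L^2(\Lambda)}\diff r$ expresses the three covariances as integrals over $[0,t\wedge s]$ of $L^2(\Lambda)$-inner products of $N$- and $N^\prime$-functions applied to fractional-Laplacian images of $K_{\delta,x}$.

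Finally I would pass to the rescaled picture using the elementary scaling $(-\Delta)^{\gamma}K_{\delta,x}=\delta^{-2\gamma}\big((-\Delta_{\delta,x})^{\gamma}K\big)_{\delta,x}$ (chain rule and functional calculus; cf.\ \Cref{lem: rescaling}), the scaling identity $N(t)z_{\delta,x}=(N_{\delta,x}(t)z)_{\delta,x}$ recorded after \eqref{eq: Nres} together with its time-derivative $N^\prime(t)z_{\delta,x}=(N^\prime_{\delta,x}(t)z)_{\delta,x}$, and the isometry $\sca{f_{\delta,x}}{g_{\delta,x}}_{L^2(\Lambda)}=\sca{f}{g}_{L^2(\Lambda_{\delta,x})}$ that follows from the $\delta^{-d/2}$-normalisation in \eqref{eq:shift_scaling_operator}. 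Collecting the powers of $\delta$, each integrand picks up exactly the prefactor $\delta^{-2\alpha_i-2\alpha_j}$, $\delta^{-2\beta_k-2\beta_l}$ or $\delta^{-2\alpha_i-2\beta_k}$, which gives the claimed identities. The only genuinely delicate point is the second step — interchanging the inner product with the stochastic integral and transferring the $M,N$-operators onto the deterministic kernels — which relies on the square-integrability in $r$ noted in the first paragraph; on the bounded domain this is immediate, whereas the uniform-in-$\delta$ versions of these bounds needed later are precisely the content of \Cref{lem: semigroupbound}.
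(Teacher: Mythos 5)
Your proposal matches the paper's proof in both structure and content: both start from the mild-solution representation with zero initial data, use the Itô isometry (the paper cites \cite[Proposition 4.28]{da_prato_stochastic_2014}, which encodes exactly the covariance identity you write down), apply self-adjointness of the $M,N$-functions (\Cref{lem: self_adjointness_MN}) to move the operators onto the deterministic kernels, and finish with the rescaling identities of \Cref{lem: rescaling} together with the $L^2$-isometry of the shift-dilation $z\mapsto z_{\delta,x}$ to collect the powers of $\delta$. The extra well-definedness remarks you supply (boundedness of $N(t),N'(t)$ on the bounded domain, membership of $(-\Delta)^{\beta_k}K_{\delta,x}$ in $D(B_\eta)$) are consistent with, though more explicit than, what the paper records.
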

\begin{proof}[Proof of \Cref{lemma:covariance_structure}]
Using \eqref{eq: solution} and \cite[Proposition 4.28]{da_prato_stochastic_2014}, we observe
\begin{align*}
        \mathrm{Cov}(u_{\delta,x}^{\Delta_i}(t), u_{\delta,x}^{\Delta_j}(s)) &=\int_{0}^{t \land s} \langle N^{*}(t-r)(-\Delta)^{\alpha_i} K_{\delta,x},N^{*}(s-r)(-\Delta)^{\alpha_j} K_{\delta,x} \rangle \mathrm{d}r,\\
        \mathrm{Cov}(v_{\delta,x}^{\Delta_k}(t), v_{\delta,x}^{\Delta_l}(s)) &= \int_{0}^{t \land s} \langle (N^\prime(t-r))^*(-\Delta)^{\beta_k}K_{\delta, x},(N^\prime(s-r))^*(-\Delta)^{\beta_l}K_{\delta, x}\rangle \mathrm{d}r,\nonumber\\
        \mathrm{Cov}(u_{\delta,x}^{\Delta_i}(t), v_{\delta,x}^{\Delta_k}(s)) &= \int_{0}^{t \land s} \langle N^{*}(t-r)(-\Delta)^{\alpha_i} K_{\delta,x},(N^\prime(s-r))^*(-\Delta)^{\beta_k}K_{\delta, x}\rangle \mathrm{d}r.\nonumber
\end{align*}
We can rewrite the last equations through the functional calculus by using \Cref{lem: rescaling}, the representations \eqref{eq: Mres} and \eqref{eq: Nres}, self-adjointness by \Cref{lem: self_adjointness_MN} as well as \cite[Theorem 5.9] {schmudgenUnboundedSelfadjointOperators2012}:
\begin{align*}
        &\mathrm{Cov}(u_{\delta,x}^{\Delta_i}(t), u_{\delta,x}^{\Delta_j}(s)) \\
        &\qquad= \delta^{-2\alpha_i-2\alpha_j}\int_0^{t \land s} \langle N_{\delta,x}(t-r)(-\Delta_{\delta,x})^{\alpha_i} K,N_{\delta,x}(s-r)(-\Delta_{\delta,x})^{\alpha_j} K \rangle_{L^{2}(\Lambda_{\delta,x})}\mathrm{d}r,\nonumber\\
        &\mathrm{Cov}(v_{\delta,x}^{\Delta_k}(t), v_{\delta,x}^{\Delta_l}(s))\\ &\qquad= \delta^{-2\beta_k-2\beta_l}\int_0^{t \land s} \sca{N^\prime_{\delta,x}(t-r)(-\Delta_{\delta,x})^{\beta_k}K}{N^\prime_{\delta,x}(s-r)(-\Delta_{\delta,x})^{\beta_l}K}_{L^2(\Lambda_{\delta,x})}\diff r,\nonumber\\
        &\mathrm{Cov}(u_{\delta,x}^{\Delta_i}(t), v_{\delta,x}^{\Delta_k}(s))\\
        &\qquad=\delta^{-2\alpha_i-2\beta_k}\int_{0}^{t \land s}\langle N_{\delta,x}(t-r)(-\Delta_{\delta,x})^{\alpha_i} K,N^\prime_{\delta,x}(s-r)(-\Delta_{\delta,x})^{\beta_k}K\rangle_{L^{2}(\Lambda_{\delta,x})}\mathrm{d}r.\qedhere\nonumber
\end{align*}
\end{proof}

\begin{lemma}[{Scaling limits for $M,N$-functions}]
    \label{lem: scaling limit}  Let $\delta>0.$ Let $z_1,z_2\in L^2(\R^d)$ with compact support in $\bigcap_{x\in\mathcal{J}}\Lambda_{\delta,x}$ such that there exist compactly supported functions $\bar{z}_1,\bar{z}_2\in H^{2\ceil{\alpha_1}}(\R^d)$ with $z_i=\Delta_0^{\ceil{\alpha_1}}\bar{z}_i,$ $i=1,2$. As $\delta \rightarrow 0$, we obtain the following convergences.
    \begin{enumerate}[label=(\roman*)]
            \item[]
            \item Let $t\geq0$. Let $\beta_1=0$, i.e. $B_{\eta}=\eta_1.$ Then, uniformly in $x\in\mathcal{J},$
            \begin{align*}    
                \delta^{-2\alpha_1}\sca{N_{\delta,x}(t)z_1}{N_{\delta,x}(t)z_2}_{L^2(\Lambda_{\delta,x})}&\rightarrow -e^{\eta_1t}\frac{1}{2}\sca{\bar A_\theta^{-1}z_1}{z_2}_{L^2(\R^d)},\\
                \sca{M_{\delta,x}(t)z_1}{M_{\delta,x}(t)z_2}_{L^2(\Lambda_{\delta,x})}&\rightarrow e^{\eta_1t}\frac{1}{2}\sca{z_1}{z_2}_{L^2(\R^d)},\\
                \delta^{-\alpha_1}\sca{N_{\delta,x}(t)z_1}{M_{\delta,x}(t)z_2}_{L^2(\Lambda_{\delta,x})}&\rightarrow 0.
            \end{align*}
            \item Let $r_1\neq r_2$. Let $\beta_1=0.$ Then, uniformly in $x\in\mathcal{J}$, 
            \begin{align*}    
                \delta^{-2\alpha_1}\sca{N_{\delta,x}(r_1)z_1}{N_{\delta,x}(r_2)z_2}_{L^2(\Lambda_{\delta,x})}&\rightarrow 0,\\
                \sca{M_{\delta,x}(r_1)z_1}{M_{\delta,x}(r_2)z_2}_{L^2(\Lambda_{\delta,x})}&\rightarrow 0,\\
                \delta^{-\alpha_1}\sca{N_{\delta,x}(r_1)z_1}{M_{\delta,x}(r_2)z_2}_{L^2(\Lambda_{\Lambda_{\delta,x}})}&\rightarrow 0.
            \end{align*}
        \item Let $0<2\beta_1\leq\alpha_1$, $t\in(0,T].$ Then, uniformly in $x\in\mathcal{J},$
        \begin{align*}
            &\delta^{-2\alpha_1-2\beta_1}\sca{\int_0^{t}N_{\delta,x}(r)^2\diff rz_1}{z_2}_{L^2(\Lambda_{\delta,x})}\rightarrow \frac{1}{2}\sca{\bar B_\eta^{-1}\bar A_\theta^{-1}z_1}{z_2}_{L^2(\R^d)},\\
            &\delta^{-2\beta_1}\sca{\int_0^t(N^\prime_{\delta,x}(r))^2\diff rz_1}{z_2}_{L^2(\Lambda_{\delta,x})}\rightarrow -\frac{1}{2}\sca{\bar{B}_\eta^{-1}z_1}{z_2}_{L^2(\R^d)},\\
            &\delta^{-2\beta_1-\alpha_1}\sca{\int_0^tN_{\delta,x}(r)N^\prime_{\delta,x}(r)\diff rz_1}{z_2}_{L^2(\Lambda_{\delta,x})}\rightarrow0.
        \end{align*}
        \end{enumerate}
        \begin{proof}[Proof of \Cref{lem: scaling limit}] 
        \begin{enumerate}[label=(\roman*)]
            \item []
            \item Using \eqref{eq: N} we have
            \begin{equation*}
                N_{\delta,x}(t)=\delta^{\alpha_1} e^{\eta_1t/2}\sin(t\delta^{-\alpha_1}L_{\theta,\eta,\delta,x}^{1/2})L_{\theta,\eta,\delta,x}^{-1/2}, \quad t \in [0,T],
            \end{equation*}
            and thus
            \begin{align*}
            &\delta^{-2\alpha_1}\sca{N_{\delta,x}(t)z_1}{N_{\delta,x}(t)z_2}_{L^2(\Lambda_{\delta,x})}\\
            &\quad=e^{\eta_1t}\sca{\sin (t\delta^{-\alpha_1}L_{\theta,\eta,\delta,x}^{1/2})L_{\theta,\eta,\delta,x}^{-1/2}z_1}{\sin (t\delta^{-\alpha_1}L_{\theta,\eta,\delta,x}^{1/2})L_{\theta,\eta,\delta,x}^{-1/2}z_2}_{L^2(\Lambda_{\delta,x})}.
            \end{align*}
            Since $S_{\theta,\eta,\delta,x}(t)\coloneqq\sin (tL_{\theta,\eta,\delta,x})L_{\theta,\eta,\delta,x}^{-1/2}$ is the operator sine function, which is generated by $-L_{\theta,\eta,\delta,x}$, and $L_{\theta,\eta,\delta,x}\rightarrow\bar{L}_{\theta,\eta}$ as $\delta \rightarrow 0$, the desired convergence follows by repeating the steps of \cite[Proposition A.10 (i)]{ziebell_wave23} regarding asymptotic equipartition of energy. Note that the assumptions in \cite[Proposition A.10]{ziebell_wave23} can be relaxed, as we are not considering the non-parametric case. The employed strong resolvent convergence and the involved convergence of the spectral measures then follow from \cite[Theorem 1 and 2]{weidmannStrongOperatorConvergence1997} by choosing the core $C_c^{\infty}(\mathbb{R}^{d})$ as described in \cite[Lemma A.6]{ziebell_wave23}. The convergences for the functional calculus associated with the respective spectral measures are then immediate, see \cite[Proposition 3.3]{ziebell_wave23}. Similarly, we observe 
            \begin{align*}
                &\sca{M_{\delta,x}(t)z_1}{M_{\delta,x}(t)z_2}_{L^2(\Lambda_{\delta,x})}\\
                &=e^{\eta_1t}\langle{(\cos (t\delta^{-\alpha_1}L_{\theta,\eta,\delta,x}^{1/2})-\delta^{\alpha_1}\eta_1/2\sin(t\delta^{-\alpha_1}L_{\theta,\eta,\delta,x}^{1/2})L_{\theta,\eta,\delta,x}^{-1/2})z_1},\\
                &\qquad\quad{(\cos (t\delta^{-\alpha_1}L_{\theta,\eta,\delta,x}^{1/2})-\delta^{\alpha_1}\eta_1/2\sin(t\delta^{-\alpha_1}L_{\theta,\eta,\delta,x}^{1/2})L_{\theta,\eta,\delta,x}^{-1/2})z_2}\rangle_{L^2(\Lambda_{\delta,x})}.
            \end{align*}
            Likewise, $C_{\theta,\eta,\delta,x}(t)\coloneqq\cos (tL_{\theta,\eta,\delta,x}^{1/2})$ is the cosine operator function associated with the operator $-L_{\theta,\eta,\delta,x}.$ \cite[Example 3.14.15]{arendtVectorvaluedLaplaceTransforms2001} yields the representation 
            $$C_{\theta, \eta,\delta,x}(t\delta^{-\alpha_1})=\frac{1}{2}\left(U_{\theta,\eta,\delta,x}(t\delta^{-\alpha_1})+U_{\theta,\eta,\delta,x}(-t\delta^{-\alpha_1})\right)$$
            with the unitary group $(U_{\theta,\delta,x}(t))_{t\in\R}$ generated by $i(L_{\theta,\eta,\delta,x}^{1/2})$ on $L^2(\Lambda_{\delta,x})$ and the steps of \cite[Proposition A.10 (i)]{ziebell_wave23} can be repeated to verify convergence. Analogous calculations show
            $$\delta^{-\alpha_1}\sca{N_{\delta,x}(t)z_1}{M_{\delta,x}(t)z_2}_{L^2(\Lambda_{\delta,x})}\rightarrow 0.$$
            All the above convergences hold uniformly in $x \in \mathcal{J}$ since in the parametric case the convergences in \cite[Proposition 4.5 and Lemma A.6]{ziebell_wave23} are uniform in $x \in \mathcal{J}$ when applied to functions with support in $\bigcap_{x\in\mathcal{J}}\Lambda_{\delta,x}$. In fact, restricted to $\bigcap_{x\in\mathcal{J}}\Lambda_{\delta,x},$ the Laplacian $\Delta_{\delta,x}$ is identical to $\Delta_{\delta,y}$ for $y\in\mathcal{J}$ and the associated spectral measures become independent of the spatial point $y \in \mathcal{J}$, when applied to functions with support in $\bigcap_{x\in\mathcal{J}}\Lambda_{\delta,x}$.
            \item The convergences follow similarly to (i) by using the slow-fast orthogonality as presented in \cite[Proposition A.10 (ii)]{ziebell_wave23}.
            \item [(iii)]
            For readability, we suppress various indices throughout the remainder of the proof. Thus, we introduce the following notation:
            \begin{equation}
                \label{eq: indices}A\coloneqq A_{\theta,\delta,x};\quad B\coloneqq B_{\eta,\delta,x};\quad L\coloneqq L_{\theta,\eta,\delta,x};\quad \alpha=\delta^{\alpha_1};\quad \beta=\delta^{\beta_1}.
            \end{equation}
             By definition of $M, N$-functions, substitution and the fundamental theorem of calculus, we then obtain
            \begin{align}
               &\delta^{-2\alpha_1-2\beta_1}\sca{\int_0^{t}N_{\delta,x}(r)^2\diff rz_1}{z_2}_{L^2(\Lambda_{\delta,x})}\nonumber\\
               &=\alpha^{-2}\beta^{-2}\sca{\int_0^{t}N_{\delta,x}(r)^2\diff rz_1}{z_2}_{L^2(\Lambda_{\delta,x})}\nonumber\\
               &=\sca{\int_0^{t\beta^{-2}}e^{rB}\sin^2(r\alpha^{-1}\beta^2L^{1/2})L^{-1}\diff rz_1}{z_2}_{L^2(\Lambda_{\delta,x})}\label{eq: DCTbound}\\
               &=\langle\Big(e^{t\beta^{-2}B}\sin^2(t\alpha^{-1}L^{1/2})B^2-2\alpha^{-1}\beta^2e^{t\beta^{-2}B}BL^{1/2}\cos(t\alpha^{-1}L^{1/2})\sin(t\alpha^{-1}L^{1/2})\nonumber\\
               &\quad\quad\left.+2\alpha^{-2}\beta^4(e^{t\beta^{-2}B}-I)L\right)B^{-1}L^{-1}(4\alpha^{-2}\beta^4L+B^2)^{-1}z_1,z_2\rangle_{L^2(\Lambda_{\delta,x})}.\nonumber
            \end{align}
            We can rewrite the last display as 
            \begin{align}
                &\frac{1}{2}\sca{B^{-1}A^{-1}z_1}{z_2}_{L^2(\Lambda_{\delta,x})}\label{eq: t1}\\
                &-\frac{\alpha^2\beta^{-4}}{4}\sca{e^{t\beta^{-2}B}\sin^2(t\alpha^{-1}L^{1/2})L^{-1}BA^{-1}z_1}{z_2}_{L^2(\Lambda_{\delta,x})}\label{eq: t2}\\
                &+\frac{\alpha\beta^{-2}}{2}\sca{e^{t\beta^{-2}B}\cos(t\alpha^{-1}L^{1/2})\sin(t\alpha^{-1}L^{1/2})L^{-1/2}A^{-1}z_1}{z_2}_{L^2(\Lambda_{\delta,x})}\label{eq: t3}\\
                &-\frac{1}{2}\sca{e^{t\beta^{-2}B}B^{-1}A^{-1}z_1}{z_2}_{L^2(\Lambda_{\delta,x})}\label{eq: t4}.
            \end{align}
            Since $A=A_{\theta,\delta,x}$ converges to $\bar A_{\theta}$, \eqref{eq: t1} converges to $\frac{1}{2}\sca{\bar B_\eta^{-1}\bar A_\theta^{-1}z_1}{z_2}_{L^2(\R^d)}$, while \eqref{eq: t2}, \eqref{eq: t3} and \eqref{eq: t4} tend to zero by the Cauchy-Schwarz inequality and \Cref{lem: semigroupbound}. \par As we will integrate \eqref{eq: DCTbound} on the time interval $[0,T]$ in \Cref{lem: convFisher}, we will already compute a uniform upper bound of $\eqref{eq: DCTbound}$, enabling the usage of the dominated convergence theorem. By the Cauchy-Schwarz inequality and \Cref{lem: semigroupbound} we obtain for a constant $C$ independent of the spatial point $x$ and the resolution level $\delta>0$:
            \begin{equation}
            \label{eq: upperbound DCT}
            \begin{aligned}
                &\sca{\int_0^{t\beta^{-2}}e^{rB}\sin^2(r\alpha^{-1}\beta^2L^{1/2})L^{-1}\diff rz_1}{z_2}_{L^2(\Lambda_{\delta,x})}\\
                &\qquad\leq \int_0^{T\delta^{-2\beta_1}}\norm{e^{rB_{\eta,\delta,x}/2}L_{\theta,\eta,\delta,x}^{-1/2}z_1}_{L^2(\Lambda_{\delta,x})}\norm{e^{rB_{\eta,\delta,x}/2}L_{\theta,\eta,\delta,x}^{-1/2}z_2}_{L^2(\Lambda_{\delta,x})}\diff r\\
                &\qquad\leq \int_0^\infty C\Big(1\wedge r^{-\alpha_1/(2\beta_1)})\Big)^2\diff r\eqqcolon V<\infty.
            \end{aligned}  
            \end{equation}
            Similarly to \eqref{eq: DCTbound}, as $\delta \rightarrow 0$, we obtain 
            \begin{align*}
                &\delta^{-2\beta_1}\sca{\int_0^t(N^\prime_{\delta,x}(r))^2\diff rz_1}{z_2}_{L^2(\Lambda_{\delta,x})}\\
                &=\delta^{-2\beta_1}\sca{\int_0^t\left(M_{\delta,x}(r)+{\delta^{-2\beta_1}}B_{\eta,\delta,x}N_{\delta,x}(r)\right)^2\diff rz_1}{z_2}_{L^2(\Lambda_{\delta,x})}\\
                &=\sca{\int_0^{t\beta^{-2}}e^{rB}\left(\cos(r\alpha^{-1}\beta^2L^{1/2})+\frac{\alpha\beta^{-2}}{2}B\sin(r\alpha^{-1}\beta^2L^{1/2})\right)^2\diff rz_1}{z_2}_{L^2(\Lambda_{\delta,x})}\\
                &=\langle \left(\alpha^2\beta^{-4}Be^{t\beta^{-2}B}\sin^2(t\alpha^{-1}L^{1/2})L^{-1}/4\right.\\
                &\quad\quad+\alpha\beta^{-2}e^{t\beta^{-2}B}\cos(t\alpha^{-1}L^{1/2})\sin(t\alpha^{-1}L^{1/2})L^{-1/2}/2\\
                &\quad\quad\left.+(e^{t\beta^{-2}B}-I)B^{-1}/2\right)z_1,z_2\rangle_{L^2(\Lambda_{\delta,x})}\\
                &\rightarrow-\frac{1}{2}\sca{\bar B_\eta^{-1}z_1}{z_2}_{L^2(\R^d)},
            \end{align*}
            and 
            \begin{align*}
                &\delta^{-2\beta_1-\alpha_1}\sca{\int_0^tN_{\delta,x}(r)N^\prime_{\delta,x}(r)\diff rz_1}{z_2}_{L^2(\Lambda_{\delta,x})}\\
                &=\delta^{-2\beta_1-\alpha_1}\sca{\int_0^tN_{\delta,x}(r)\left(M_{\delta,x}(r)+{\delta^{-2\beta_1}}B_{\eta,\delta,x}N_{\delta,x}(r)\right)\diff rz_1}{z_2}_{L^2(\Lambda_{\delta,x})}\\
                &=\langle\int_0^{t\beta^{-2}}e^{rB}\sin(r\alpha^{-1}\beta^2L^{1/2})L^{-1/2}\\
                &\hspace{10mm}\cdot\left(\cos(r\alpha^{-1}\beta^2L^{1/2})+\frac{\alpha\beta^{-2}}{2}B\sin(r\alpha^{-1}\beta^2L^{1/2})L^{-1/2}\right)\diff rz_1,z_2\rangle_{L^2(\Lambda_{\delta,x})}\\
                &=\frac{1}{2}\sca{\alpha\beta^{-2}e^{t\beta^{-2}B}\sin^2(t\alpha^{-1}L^{1/2})L^{-1}z_1}{z_2}_{L^2(\Lambda_{\delta,x})}\rightarrow0,
            \end{align*}
            again having a uniform upper bound in analogy to \eqref{eq: upperbound DCT}.
        \end{enumerate}
        \end{proof}
\end{lemma}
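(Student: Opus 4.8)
\emph{Proof plan.} The organising principle is that every operator occurring in the definitions \eqref{eq: Mres} and \eqref{eq: Nres} of $M_{\delta,x}(t),N_{\delta,x}(t)$---the damping semigroup $e^{r\delta^{-2\beta_1}B_{\eta,\delta,x}}$, the operator cosine $\cos(r\delta^{-\alpha_1}L_{\theta,\eta,\delta,x}^{1/2})$, the operator sine $\sin(r\delta^{-\alpha_1}L_{\theta,\eta,\delta,x}^{1/2})L_{\theta,\eta,\delta,x}^{-1/2}$, and the negative powers of $A_{\theta,\delta,x},B_{\eta,\delta,x}$---is produced by functional calculus from the \emph{single} self-adjoint operator $-\Delta_{\delta,x}$. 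Hence, acting on functions supported in $\bigcap_{x\in\mathcal{J}}\Lambda_{\delta,x}$, these operators all commute and their joint spectral calculus reduces to scalar calculus; moreover on $\bigcap_{x\in\mathcal{J}}\Lambda_{\delta,x}$ the operator $-\Delta_{\delta,x}$ coincides with $-\Delta_{\delta,y}$ for every $y\in\mathcal{J}$, so every limit below is automatically uniform in $x\in\mathcal{J}$. Using in addition the strong resolvent convergences $-\Delta_{\delta,x}\to-\Delta_0$, and hence $L_{\theta,\eta,\delta,x}\to\bar L_{\theta,\eta}$, $B_{\eta,\delta,x}\to\bar B_\eta$, $A_{\theta,\delta,x}\to\bar A_\theta$, established in \cite[Proposition 4.5, Lemma A.6, Proposition 3.3]{ziebell_wave23} (themselves based on \cite[Theorems 1--2]{weidmannStrongOperatorConvergence1997} with the core $C_c^\infty(\mathbb{R}^d)$), one passes to the limit in the relevant spectral integrals by dominated convergence.

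\emph{Parts (i) and (ii).} When $\beta_1=0$ the damping is the scalar $B_{\eta,\delta,x}=\eta_1$, so that
\[
N_{\delta,x}(t)=\delta^{\alpha_1}e^{\eta_1 t/2}S_{\delta,x}(t\delta^{-\alpha_1}),\qquad M_{\delta,x}(t)=e^{\eta_1 t/2}\bigl(C_{\delta,x}(t\delta^{-\alpha_1})-\tfrac{\eta_1}{2}\delta^{\alpha_1}S_{\delta,x}(t\delta^{-\alpha_1})\bigr),
\]
where $C_{\delta,x}(s)=\cos(sL_{\theta,\eta,\delta,x}^{1/2})$ and $S_{\delta,x}(s)=\sin(sL_{\theta,\eta,\delta,x}^{1/2})L_{\theta,\eta,\delta,x}^{-1/2}$ are the operator cosine and sine families generated by $-L_{\theta,\eta,\delta,x}$. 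Thus $\delta^{-2\alpha_1}\langle N_{\delta,x}(t)z_1,N_{\delta,x}(t)z_2\rangle=e^{\eta_1 t}\langle S_{\delta,x}(t\delta^{-\alpha_1})z_1,S_{\delta,x}(t\delta^{-\alpha_1})z_2\rangle$, and analogously for the two other inner products, the $\delta^{\alpha_1}$-contributions of $M_{\delta,x}$ being asymptotically negligible because $S_{\delta,x}(\cdot)z_2$ stays bounded on the smooth $z_2$. Since $t\delta^{-\alpha_1}\to\infty$, the asymptotic equipartition of energy for the cosine/sine families of $L_{\theta,\eta,\delta,x}\to\bar L_{\theta,\eta}=-\bar A_\theta$---that is, \cite[Proposition A.10(i)]{ziebell_wave23}, whose non-parametric assumptions may be dropped here---gives $\langle C_{\delta,x}(s)z_1,C_{\delta,x}(s)z_2\rangle\to\tfrac12\langle z_1,z_2\rangle$, $\langle S_{\delta,x}(s)z_1,S_{\delta,x}(s)z_2\rangle\to\tfrac12\langle\bar L_{\theta,\eta}^{-1}z_1,z_2\rangle=-\tfrac12\langle\bar A_\theta^{-1}z_1,z_2\rangle$, and $\langle S_{\delta,x}(s)z_1,C_{\delta,x}(s)z_2\rangle\to0$, which is (i). Part (ii) follows in the same way, invoking instead the slow--fast orthogonality of \cite[Proposition A.10(ii)]{ziebell_wave23}: for $r_1\neq r_2$, each of $\langle S(r_1 s)z_1,S(r_2 s)z_2\rangle$, $\langle C(r_1 s)z_1,C(r_2 s)z_2\rangle$, $\langle S(r_1 s)z_1,C(r_2 s)z_2\rangle$ tends to $0$ as $s\to\infty$.

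\emph{Part (iii), the core.} Now $\beta_1>0$. By \Cref{lem: rescaling} and the time substitution $r\mapsto\delta^{2\beta_1}r$, the expression $\delta^{-2\alpha_1-2\beta_1}\langle\int_0^tN_{\delta,x}(r)^2\diff r\,z_1,z_2\rangle$ equals $\langle\int_0^{t\delta^{-2\beta_1}}e^{rB_{\eta,\delta,x}}\sin^2(r\delta^{2\beta_1-\alpha_1}L_{\theta,\eta,\delta,x}^{1/2})L_{\theta,\eta,\delta,x}^{-1}\diff r\,z_1,z_2\rangle$. Since all operators commute, the fundamental theorem of calculus applied to the closed-form antiderivative of $s\mapsto e^{as}\sin^2(bs)$ evaluates this spectral integral explicitly; exploiting further the identity $-A_{\theta,\delta,x}=L_{\theta,\eta,\delta,x}+\tfrac14\delta^{2\alpha_1-4\beta_1}B_{\eta,\delta,x}^2$, which is immediate from the definitions, one splits off a \emph{leading term} $\tfrac12\langle B_{\eta,\delta,x}^{-1}A_{\theta,\delta,x}^{-1}z_1,z_2\rangle\to\tfrac12\langle\bar B_\eta^{-1}\bar A_\theta^{-1}z_1,z_2\rangle$ (by the resolvent convergences above) together with three \emph{remainder terms}, each carrying a prefactor $\tfrac14\delta^{2\alpha_1-4\beta_1}$ or $\tfrac12\delta^{\alpha_1-2\beta_1}$ (both $\lesssim 1$ since $\alpha_1\ge2\beta_1$) and a factor $e^{t\delta^{-2\beta_1}B_{\eta,\delta,x}}$. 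Splitting $e^{t\delta^{-2\beta_1}B_{\eta,\delta,x}}$ and the negative powers of $L_{\theta,\eta,\delta,x}$ symmetrically, extracting the uniformly bounded operator factors and using Cauchy--Schwarz, each remainder is dominated by quantities of the type $\norm{e^{t\delta^{-2\beta_1}B_{\eta,\delta,x}/2}L_{\theta,\eta,\delta,x}^{-1/2}z_i}$, which tend to $0$ by \Cref{lem: semigroupbound}: the rescaled time $t\delta^{-2\beta_1}$ diverges while the decay rate $\lceil\alpha_1\rceil-\alpha_1/2>0$ is strictly positive. It is precisely for this step that the structural assumption $z_i=\Delta_0^{\lceil\alpha_1\rceil}\bar z_i$ is imposed, as it is exactly the hypothesis of \Cref{lem: semigroupbound}. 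Integrating the same bound over $[0,\infty)$ (convergent because $\alpha_1>\beta_1$) in addition furnishes the $\delta$- and $x$-uniform dominating constant $V<\infty$ needed for the later use of dominated convergence in \Cref{lem: convFisher}. The remaining two limits are obtained verbatim after substituting $N'_{\delta,x}=M_{\delta,x}+\delta^{-2\beta_1}B_{\eta,\delta,x}N_{\delta,x}$ from \Cref{lem: generated semigroup} and expanding the square.

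\emph{Expected obstacle.} The genuine difficulty is part (iii): the integrand couples a heat-type factor $e^{rB_{\eta,\delta,x}}$ (contractive and, once the structural assumption on $z_i$ is used, smoothing) with a wave-type oscillation carried by $\cos,\sin$ of $\delta^{2\beta_1-\alpha_1}L_{\theta,\eta,\delta,x}^{1/2}$, whose effective frequency blows up like $\delta^{-1}$; consequently one cannot pass to the limit by the integrated convergence of a heat kernel (no sign control against the oscillation) nor by the pointwise asymptotic equipartition of the undamped wave equation (the damping annihilates the candidate oscillatory limit). The viable route is the exact closed-form spectral computation above, and the real labour is the bookkeeping that isolates the leading term cleanly---where the identity $-A_{\theta,\delta,x}=L_{\theta,\eta,\delta,x}+\tfrac14\delta^{2\alpha_1-4\beta_1}B_{\eta,\delta,x}^2$ is essential---and that leaves every remainder with a truly vanishing prefactor surviving multiplication by the integrable semigroup bound $1\wedge r^{-(\lceil\alpha_1\rceil-\alpha_1/2)/\beta_1}$.
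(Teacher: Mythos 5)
Your proposal follows essentially the same route as the paper's proof: the same rescaling and reduction to cosine/sine operator families with asymptotic equipartition and slow--fast orthogonality in parts (i)--(ii), and the same time substitution, explicit antiderivative, and splitting off of the leading $\tfrac12\langle B^{-1}A^{-1}z_1,z_2\rangle$ term via the identity $-A_{\theta,\delta,x}=L_{\theta,\eta,\delta,x}+\tfrac14\delta^{2\alpha_1-4\beta_1}B_{\eta,\delta,x}^2$, with the remainders killed by Cauchy--Schwarz and the semigroup bound, in part (iii). One small imprecision: one of the three remainders (the paper's \eqref{eq: t4}) carries a constant prefactor $\tfrac12$ rather than a power of $\delta$ and vanishes solely because of the $e^{t\delta^{-2\beta_1}B_{\eta,\delta,x}}$ factor, but since your argument already rests on that semigroup decay the conclusion is unaffected.
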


\begin{lemma}
    \label{lem: convFisher}
    Grant \Cref{ass: mainass2} (i)-(iii) and suppose $(u_0,v_0)^\top=(0,0)^\top$. Recall the definition of $C(\eta_1,T)$ given by \eqref{eq: constantC} and let $\beta_1=0.$ Then, for $1\leq i,j\leq p$ and $1\leq k,l\leq q$, we obtain, as $\delta\rightarrow 0$, the convergences
    \begin{align}
        &\sup_{x\in\mathcal{J}}\left|\delta^{2(\alpha_i+\alpha_j-\alpha_1)}\int_0^T\operatorname{Cov}(u_{\delta,x}^{\Delta_i} (t),u_{\delta,x}^{\Delta_j})\diff t+\frac{C(\eta_1,T)}{\theta_1}\norm{(-\Delta)^{(\alpha_i+\alpha_j-\alpha_1)/2}K}^2_{L^2(\R^d)}\right|\rightarrow0;\label{eq: conv uDelta}\\
        &\sup_{x\in\mathcal{J}}\left|\int_0^T\operatorname{Var}(v_{\delta,x} (t))\diff t-C(\eta_1,T)\norm{K}^2_{L^2(\R^d)}\right|\rightarrow0;\label{eq: conv v}\\
        &\sup_{x\in\mathcal{J}}\left|\delta^{2(\alpha_i-\alpha_1/2)}\int_0^T\operatorname{Cov}(u_{\delta,x}^{\Delta_i} (t),v_{\delta,x}(t))\diff t\right|\rightarrow0.\label{eq: conv cov}
    \end{align}
    If, $0<2\beta_1\leq\alpha_1$ we obtain the convergences
    \begin{align}
        &\sup_{x\in\mathcal{J}}\left|\delta^{2(\alpha_i+\alpha_j-\alpha_1-\beta_1)}\int_0^T\operatorname{Cov}(u_{\delta,x}^{\Delta_i} (t),u_{\delta,x}^{\Delta_j})\diff t-\frac{T}{2\eta_1\theta_1}\norm{(-\Delta)^{(\alpha_i+\alpha_j-\alpha_1-\beta_1)/2}K}_{L^2(\R^d)}^2\right|\rightarrow0;\label{eq: conv uDelta_damp}\\
        &\sup_{x\in\mathcal{J}}\left|\delta^{2(\beta_k+\beta_l-\beta_1)}\int_0^T\operatorname{Cov}(v_{\delta,x}^{\Delta_k} (t),v_{\delta,x}^{\Delta_l}(t))\diff t+\frac{T}{2\eta_1}\norm{(-\Delta)^{(\beta_k+\beta_l-\beta_1)/2}K}_{L^2(\R^d)}^2\right|\rightarrow0;\label{eq: conv v_damp}\\
        &\sup_{x\in\mathcal{J}}\left|\delta^{2(\alpha_i+\beta_k-\beta_1-\alpha_1/2)}\int_0^T\operatorname{Cov}(u_{\delta,x}^{\Delta_i} (t),v_{\delta,x}^{\Delta_k}(t))\diff t\right|\rightarrow0.\label{eq: conv cov_damp}
    \end{align}
\end{lemma}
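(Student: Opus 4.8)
The plan is to feed the covariance formulas of \Cref{lemma:covariance_structure} into the left-hand sides, substitute $r\mapsto t-r$ in the inner time integral, read off the $\delta\to 0$ limit of each integrand --- pointwise in $t$ and uniformly in $x\in\mathcal J$ --- from \Cref{lem: scaling limit}, and interchange that limit with $\int_0^T\diff t$ by dominated convergence, the required $\delta$- and $x$-uniform bounds being supplied by \Cref{lem: semigroupbound} (and, for $0<2\beta_1\le\alpha_1$, by the estimate \eqref{eq: upperbound DCT}). The one preliminary step is to put each integrand into the precise form used in \Cref{lem: scaling limit}: since $N_{\delta,x}(r)$ and $N'_{\delta,x}(r)=M_{\delta,x}(r)+\delta^{-2\beta_1}B_{\eta,\delta,x}N_{\delta,x}(r)$ are self-adjoint (\Cref{lem: self_adjointness_MN} and the functional calculus) and commute with $(-\Delta_{\delta,x})^{\gamma}$, every covariance integrand becomes an inner product of the type $\sca{N_{\delta,x}(r)^2z_1}{z_2}$, $\sca{(N'_{\delta,x}(r))^2z_1}{z_2}$ or $\sca{N_{\delta,x}(r)N'_{\delta,x}(r)z_1}{z_2}$, with $z_1,z_2$ obtained from $K$ by applying powers of the Laplacian; these satisfy the hypotheses of \Cref{lem: scaling limit} by \Cref{ass: mainass2} (ii) once one passes to the fixed limiting functions $(-\Delta_0)^{\gamma}K$, using that $\Delta_{\delta,x}$ coincides with $\Delta_0$ on $\bigcap_{x\in\mathcal J}\Lambda_{\delta,x}$.

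For $\beta_1=0$, multiplying $\operatorname{Cov}(u^{\Delta_i}_{\delta,x}(t),u^{\Delta_j}_{\delta,x}(t))$ by $\delta^{2(\alpha_i+\alpha_j-\alpha_1)}$ turns its prefactor into $\delta^{-2\alpha_1}$, and \Cref{lem: scaling limit} (i) shows the integrand tends to $-\tfrac12 e^{\eta_1 r}\sca{\bar A_\theta^{-1}(-\Delta_0)^{\alpha_i}K}{(-\Delta_0)^{\alpha_j}K}=-\tfrac{1}{2\theta_1}e^{\eta_1 r}\norm{(-\Delta_0)^{(\alpha_i+\alpha_j-\alpha_1)/2}K}^2$, using $\bar A_\theta=\theta_1(-\Delta_0)^{\alpha_1}$ and self-adjointness to merge Laplacian exponents. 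Dominated convergence over the bounded set $\{0\le r\le t\le T\}$ --- the integrand being controlled by $e^{|\eta_1|T}$ times norms bounded uniformly via \Cref{lem: semigroupbound} --- together with $\int_0^T\!\int_0^t e^{\eta_1 r}\diff r\diff t=2C(\eta_1,T)$, yields \eqref{eq: conv uDelta}. For \eqref{eq: conv v} one expands $\norm{N'_{\delta,x}(r)K}^2$ via $N'_{\delta,x}(r)=M_{\delta,x}(r)+\eta_1 N_{\delta,x}(r)$: by \Cref{lem: scaling limit} (i) the $M$--$M$ term converges to $\tfrac12 e^{\eta_1 r}\norm{K}^2$ while the mixed and the $N$--$N$ terms vanish (the latter carrying an extra factor $\delta^{2\alpha_1}$), and integration in $t$ reproduces $C(\eta_1,T)\norm{K}^2$; the same expansion makes every limiting term for \eqref{eq: conv cov} vanish.

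For $0<2\beta_1\le\alpha_1$ the argument is structurally identical but rests on \Cref{lem: scaling limit} (iii). Inserting the prefactors $\delta^{2(\alpha_i+\alpha_j-\alpha_1-\beta_1)}$, $\delta^{2(\beta_k+\beta_l-\beta_1)}$, $\delta^{2(\alpha_i+\beta_k-\beta_1-\alpha_1/2)}$ transforms the three covariance expressions into $\delta^{-2\alpha_1-2\beta_1}\sca{\int_0^tN_{\delta,x}(r)^2\diff r\,z_1}{z_2}$, $\delta^{-2\beta_1}\sca{\int_0^t(N'_{\delta,x}(r))^2\diff r\,z_1}{z_2}$ and $\delta^{-2\beta_1-\alpha_1}\sca{\int_0^tN_{\delta,x}(r)N'_{\delta,x}(r)\diff r\,z_1}{z_2}$, whose limits by \Cref{lem: scaling limit} (iii) are $\tfrac12\sca{\bar B_\eta^{-1}\bar A_\theta^{-1}z_1}{z_2}$, $-\tfrac12\sca{\bar B_\eta^{-1}z_1}{z_2}$ and $0$, uniform in $x$ and --- crucially --- independent of $t$. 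With $\bar B_\eta=\eta_1(-\Delta_0)^{\beta_1}$, $\bar A_\theta=\theta_1(-\Delta_0)^{\alpha_1}$ and self-adjointness these become $\tfrac1{2\eta_1\theta_1}\norm{(-\Delta_0)^{(\alpha_i+\alpha_j-\alpha_1-\beta_1)/2}K}^2$, $-\tfrac1{2\eta_1}\norm{(-\Delta_0)^{(\beta_k+\beta_l-\beta_1)/2}K}^2$ and $0$; since these do not depend on $t$, the outer $\int_0^T\diff t$ merely multiplies by $T$, giving \eqref{eq: conv uDelta_damp}--\eqref{eq: conv cov_damp}. The dominating functions needed for interchanging the limits are, by \eqref{eq: upperbound DCT}, integrable uniformly in $\delta,x$ precisely because $\alpha_1\ge 2\beta_1$.

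Most of the substance is already encapsulated in \Cref{lem: semigroupbound,lem: scaling limit}, so what remains genuinely delicate is the two interchange-of-limit steps, uniform in $x\in\mathcal J$. The subtle point there is that $(-\Delta_{\delta,x})^{\gamma}K$ is $\delta$-dependent and, for non-integer $\gamma$, not compactly supported, so one must carefully exploit that on $\bigcap_{x\in\mathcal J}\Lambda_{\delta,x}$ all the Dirichlet Laplacians $\Delta_{\delta,x}$ coincide with $\Delta_0$ and their spectral measures become $x$-independent, thereby reducing to the fixed functions $(-\Delta_0)^{\gamma}K$ to which \Cref{lem: scaling limit} applies; and one must verify that the resulting dominating functions are genuinely $\delta$-free and integrable. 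Everything else is bookkeeping with powers of $\delta$ and elementary time integrals.
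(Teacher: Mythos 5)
Your proposal is correct and follows essentially the same route as the paper's own proof: feed \Cref{lemma:covariance_structure} into the left-hand sides, use self-adjointness and the substitution $r\mapsto t-r$ to rewrite the integrands as $\sca{N_{\delta,x}(r)^2 z_1}{z_2}$, $\sca{(N'_{\delta,x}(r))^2 z_1}{z_2}$, $\sca{N_{\delta,x}(r)N'_{\delta,x}(r)z_1}{z_2}$, invoke \Cref{lem: scaling limit}(i)/(iii) for the pointwise limits uniform in $x$, and interchange with $\int_0^T\diff t$ via dominated convergence, the majorant coming from \Cref{lem: semigroupbound} (respectively the bound \eqref{eq: upperbound DCT}). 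Your bookkeeping of the $\delta$-powers, the identity $\int_0^T\int_0^t e^{\eta_1 r}\diff r\diff t=2C(\eta_1,T)$, the expansion $N'_{\delta,x}=M_{\delta,x}+\eta_1 N_{\delta,x}$ for $\beta_1=0$, and the reduction $\bar A_\theta^{-1}=\theta_1^{-1}(-\Delta_0)^{-\alpha_1}$, $\bar B_\eta^{-1}=\eta_1^{-1}(-\Delta_0)^{-\beta_1}$ all match the paper's computation, and you correctly flag the subtlety of replacing $(-\Delta_{\delta,x})^\gamma K$ by $(-\Delta_0)^\gamma K$ on $\bigcap_{x\in\mathcal J}\Lambda_{\delta,x}$, which the paper likewise handles implicitly through the resolvent-convergence machinery underlying \Cref{lem: scaling limit}.
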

\begin{proof}[Proof of \Cref{lem: convFisher}]
        With the majorant constructed in \eqref{eq: upperbound DCT} in case of \eqref{eq: conv uDelta_damp} or directly by \Cref{lem: semigroupbound} for \eqref{eq: conv uDelta}, we obtain uniformly in $x\in\mathcal{J}$ by \Cref{lemma:covariance_structure}, \Cref{lem: scaling limit}(i),(iii) and the dominated convergence theorem:
        \begin{align*}
            &\delta^{2(\alpha_i+\alpha_j-\alpha_1-\beta_1)}\int_0^T\operatorname{Cov}(u_{\delta,x}^{\Delta_i} (t),u_{\delta,x}^{\Delta_j}(t))\diff t\\
            &=\delta^{-2\alpha_1-2\beta_1}\int_0^T\int_0^t\sca{N_{\delta,x}(r)^2(-\Delta_0)^{\alpha_i}K}{(-\Delta_0)^{\alpha_j}K}_{L^2(\Lambda_{\delta,x})}\diff r\diff t\\
            &=\begin{cases}
                -\int_0^T\int_0^te^{\eta_1r}\frac{1}{2}\sca{\bar A_\theta^{-1}(-\Delta_0)^{\alpha_i}K}{(-\Delta_0)^{\alpha_j}K}_{L^2(\R^d)}\diff r\diff t+o(1),\quad \beta_1=0,\\
                \int_0^T\frac{1}{2}\sca{\bar B_\eta^{-1}\bar A_\theta^{-1}(-\Delta_0)^{\alpha_i}K}{(-\Delta_0)^{\alpha_j}K}_{L^2(\R^d)}\diff t+o(1),\quad \beta_1>0,
            \end{cases}\\
            &=\begin{cases}
                -\frac{C(\eta_1,T)}{\theta_1}\norm{(-\Delta_0)^{(\alpha_i+\alpha_j-\alpha_1)/2}K}_{L^2(\R^d)}^2+o(1),\quad \beta_1=0,\\
                \frac{T}{2\eta_1\theta_1}\norm{(-\Delta_0)^{(\alpha_i+\alpha_j-\alpha_1-\beta_1)/2}K}_{L^2(\R^d)}^2+o(1),\quad \beta_1>0.
            \end{cases}
        \end{align*}
        This proves \eqref{eq: conv uDelta} and \eqref{eq: conv uDelta_damp}. Analogously, \eqref{eq: conv v} and \eqref{eq: conv v_damp} as well as \eqref{eq: conv cov} and \eqref{eq: conv cov_damp} follow using the remaining convergences in \Cref{lem: scaling limit}.
\end{proof}

\begin{lemma}
    \label{lem: convVar}
    Grant \Cref{ass: mainass2} (i)-(iii) and let $(u_0,v_0)^\top=(0,0)^\top$. Then, for $1\leq i,j\leq p$, $1\leq k,l\leq q,$ we observe
    \begin{align}
        \sup_{x\in\mathcal{J}}\operatorname{Var}\left(\int_0^Tu_{\delta,x}^{\Delta_i} (t)u_{\delta,x}^{\Delta_j}(t)\diff t\right)&=o(\delta^{-4(\alpha_i+\alpha_j-\alpha_1-\beta_1)});\label{eq: var uDelta}\\
        \sup_{x\in\mathcal{J}}\operatorname{Var}\left(\int_0^Tv_{\delta,x}^{\Delta_k}(t)v_{\delta,x}^{\Delta_l}(t)\diff t\right)&=o(\delta^{-4(\beta_k+\beta_l-\beta_1)});\label{eq: var v}\\
        \sup_{x\in\mathcal{J}}\operatorname{Var}\left(\int_0^Tu_{\delta,x}^{\Delta_i} (t)v_{\delta,x}^{\Delta_k}(t)\diff t\right)&=o(\delta^{-4(\alpha_i+\beta_k-\beta_1-\alpha_1/2)}).\label{eq: var cov}
    \end{align}
\end{lemma}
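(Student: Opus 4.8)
The plan is to exploit that, when $(u_0,v_0)^\top=(0,0)^\top$, the local measurements are jointly Gaussian, and to reduce the three bounds to dominated-convergence estimates for double time integrals of products of covariances. By \eqref{eq: solution} the solution $(u(t),v(t))_{t\in[0,T]}$ is a centred Gaussian process, so for fixed $\delta$ and $x$ all of $u^{\Delta_i}_{\delta,x},v^{\Delta_k}_{\delta,x}$ are jointly centred Gaussian. For centred jointly Gaussian processes $(X_t)_t,(Y_t)_t$, Isserlis' theorem (Wick's formula) gives $\operatorname{Cov}(X_tY_t,X_sY_s)=\operatorname{Cov}(X_t,X_s)\operatorname{Cov}(Y_t,Y_s)+\operatorname{Cov}(X_t,Y_s)\operatorname{Cov}(X_s,Y_t)$, whence, by Fubini,
\[
\operatorname{Var}\Bigl(\int_0^TX_tY_t\diff t\Bigr)=\int_0^T\!\!\int_0^T\bigl(\operatorname{Cov}(X_t,X_s)\operatorname{Cov}(Y_t,Y_s)+\operatorname{Cov}(X_t,Y_s)\operatorname{Cov}(X_s,Y_t)\bigr)\diff t\diff s.
\]
Applying this with $(X,Y)$ running through $(u^{\Delta_i}_{\delta,x},u^{\Delta_j}_{\delta,x})$, $(v^{\Delta_k}_{\delta,x},v^{\Delta_l}_{\delta,x})$ and $(u^{\Delta_i}_{\delta,x},v^{\Delta_k}_{\delta,x})$ and substituting the covariance representations of \Cref{lemma:covariance_structure}, the claims \eqref{eq: var uDelta}--\eqref{eq: var cov} reduce to showing that each resulting double integral, multiplied by the corresponding power of $\delta$ on the right-hand side, tends to $0$ uniformly over $x\in\mathcal{J}$. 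The driving mechanism differs between the two damping regimes.

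For $\beta_1>0$ I would express $N_{\delta,x}(\tau)$ and $N'_{\delta,x}(\tau)$ via the functional calculus as products of the pairwise commuting operators $e^{\tau\delta^{-2\beta_1}B_{\eta,\delta,x}/2}$, $\cos(\tau\delta^{-\alpha_1}L_{\theta,\eta,\delta,x}^{1/2})$, $\sin(\tau\delta^{-\alpha_1}L_{\theta,\eta,\delta,x}^{1/2})$ and powers of $L_{\theta,\eta,\delta,x}$, as in the proof of \Cref{lem: scaling limit}(iii). Estimating the sine and cosine factors by the identity in operator norm and invoking \Cref{lem: semigroupbound} (whose regularity hypotheses are met thanks to \Cref{ass: mainass2}(ii), cf. the majorant \eqref{eq: upperbound DCT}), one should obtain the uniform decay bounds
\[
\sup_{x\in\mathcal{J}}\norm{N_{\delta,x}(\tau)(-\Delta_{\delta,x})^{\alpha_i}K}_{L^2(\Lambda_{\delta,x})}\lesssim\delta^{\alpha_1}\bigl(1\wedge(\tau\delta^{-2\beta_1})^{-\mu_i}\bigr),\qquad\mu_i=(\alpha_i+\lceil\alpha_1\rceil-\alpha_1/2)/\beta_1\geq1,
\]
where $\mu_i\geq1$ uses $\alpha_1\geq2\beta_1$, and similarly $\sup_{x\in\mathcal{J}}\norm{N'_{\delta,x}(\tau)(-\Delta_{\delta,x})^{\beta_k}K}_{L^2(\Lambda_{\delta,x})}\lesssim1\wedge(\tau\delta^{-2\beta_1})^{-\nu_k}$ with $\nu_k=(\beta_k+\lceil\alpha_1\rceil)/\beta_1\geq2$ (the additional summand $\frac{1}{2}\delta^{\alpha_1-2\beta_1}B_{\eta,\delta,x}L_{\theta,\eta,\delta,x}^{-1/2}\sin(\cdot)$ of $N'_{\delta,x}$ is handled the same way, using $\delta^{\alpha_1-2\beta_1}\leq1$). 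Plugging these into \Cref{lemma:covariance_structure} and substituting $w=(t\wedge s)-r$, then $w=\delta^{2\beta_1}\tilde w$, yields uniformly in $x$ bounds of the form $|\operatorname{Cov}(u^{\Delta_i}_{\delta,x}(t),u^{\Delta_j}_{\delta,x}(s))|\lesssim\delta^{2\alpha_1-2\alpha_i-2\alpha_j+2\beta_1}\,g_{ij}(|t-s|\delta^{-2\beta_1})$ with $g_{ij}(\rho)=\int_0^\infty(1\wedge(\rho+\tilde w)^{-\mu_i})(1\wedge\tilde w^{-\mu_j})\diff\tilde w$ bounded and decaying at least like $\rho^{-1}\log\rho$, and analogously for the $v$- and mixed covariances (each $N'$-factor replacing a $\delta^{\alpha_1}$ by $1$ and a $\mu$ by a $\nu$). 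Since any product of two such functions is integrable on $(0,\infty)$, the outer double integral over $[0,T]^2$ is $\leq 2T\delta^{2\beta_1}\int_0^\infty(\text{product of two such }g)\,\diff u\lesssim\delta^{2\beta_1}$; collecting the powers of $\delta$ reproduces \eqref{eq: var uDelta}--\eqref{eq: var cov}, the decisive gain being the factor $\delta^{2\beta_1}=o(1)$, which reflects the concentration of the covariances about the diagonal $t=s$.

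For $\beta_1=0$ one has $q=1$ and $B_{\eta,\delta,x}=\eta_1$, so the semigroup supplies no decay and I would instead use the oscillatory slow-fast orthogonality of \Cref{lem: scaling limit}(ii). Writing $N_{\delta,x}(\tau)=\delta^{\alpha_1}e^{\eta_1\tau/2}\sin(\tau\delta^{-\alpha_1}L_{\theta,\eta,\delta,x}^{1/2})L_{\theta,\eta,\delta,x}^{-1/2}$ and, modulo an $O(\delta^{\alpha_1})$ remainder on $K$, $N'_{\delta,x}(\tau)=e^{\eta_1\tau/2}\cos(\tau\delta^{-\alpha_1}L_{\theta,\eta,\delta,x}^{1/2})$, fix $t\neq s$. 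For each $r\in(0,t\wedge s)$ the phases $t-r$ and $s-r$ are distinct, so \Cref{lem: scaling limit}(ii) gives $\delta^{-2\alpha_1}\sca{N_{\delta,x}(t-r)(-\Delta_{\delta,x})^{\alpha_i}K}{N_{\delta,x}(s-r)(-\Delta_{\delta,x})^{\alpha_j}K}_{L^2(\Lambda_{\delta,x})}\to0$ pointwise in $r$, uniformly in $x$; since these inner products are dominated in absolute value by $e^{\eta_1(t+s-2r)/2}\norm{L_{\theta,\eta,\delta,x}^{-1/2}(-\Delta_{\delta,x})^{\alpha_i}K}\,\norm{L_{\theta,\eta,\delta,x}^{-1/2}(-\Delta_{\delta,x})^{\alpha_j}K}$, bounded uniformly in $r,\delta,x$, dominated convergence in $r$ yields $\delta^{2(\alpha_i+\alpha_j-\alpha_1)}\operatorname{Cov}(u^{\Delta_i}_{\delta,x}(t),u^{\Delta_j}_{\delta,x}(s))\to0$ for every $t\neq s$, uniformly in $x$, while remaining uniformly bounded. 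A second dominated-convergence step over $(t,s)\in[0,T]^2$ — whose diagonal is Lebesgue-null — then forces the corresponding rescaled double integral to $0$ uniformly in $x$; the $v$- and mixed products are treated identically, using the $M$-$M$ and $N$-$M$ rows of \Cref{lem: scaling limit}(ii) and the diagonal (equipartition) estimates of \Cref{lem: scaling limit}(i), which additionally show that the rescaled $u$-$v$ covariance vanishes even on the diagonal. This establishes \eqref{eq: var uDelta}--\eqref{eq: var cov} when $\beta_1=0$ and completes the argument.

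The routine part will be bookkeeping the exponents of $\delta$ and tracking which $M,N$-function enters each term of the Isserlis expansion. I expect the main obstacle to be producing the uniform-in-$(x,\delta)$ majorants that legitimise the two dominated-convergence steps — above all a uniform bound on $\norm{L_{\theta,\eta,\delta,x}^{-1/2}(-\Delta_{\delta,x})^{\gamma}K}_{L^2(\Lambda_{\delta,x})}$, which is delicate because the bottom of $\sigma(L_{\theta,\eta,\delta,x})$ collapses to $0$ as $\delta\to0$. This is precisely the point at which the smoothing assumption $K=\Delta_0^{\lceil\alpha_1\rceil}\tilde K$ of \Cref{ass: mainass2}(ii) is indispensable, since it suppresses the low-frequency content of $(-\Delta_{\delta,x})^{\gamma}K$, and it is likewise what renders \Cref{lem: semigroupbound} applicable here.
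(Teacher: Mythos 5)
Your proposal is correct and follows essentially the same route as the paper: reduce via Isserlis/Wick to double time integrals of products of covariances (the paper's $V_1,V_2$), then for $\beta_1>0$ invoke the semigroup decay from \Cref{lem: semigroupbound} (enabled by \Cref{ass: mainass2}(ii)) to exhibit concentration of the covariance about the diagonal with a gain of $\delta^{2\beta_1}$, and for $\beta_1=0$ use the slow--fast orthogonality of \Cref{lem: scaling limit}(ii) with dominated convergence, the diagonal being a null set. The only cosmetic difference is that you phrase the $\beta_1>0$ estimate through explicit norm bounds on $N_{\delta,x}(\tau)(-\Delta_{\delta,x})^{\alpha_i}K$ and a convolution-type majorant $g_{ij}$ rather than the paper's function $f_{i,j}$ after substitution, yielding the slightly cleaner rate $O(\delta^{2\beta_1})$ in place of the paper's $O(\delta^{2\beta_1}\log(\delta^{-2\beta_1}))$; both are $o(1)$.
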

\begin{proof}[Proof of \Cref{lem: convVar}]
    We only show the assertion for \eqref{eq: var uDelta}. The other two statements \eqref{eq: var v} and \eqref{eq: var cov} follow in the same way. By Wick's formula \cite[Theorem 1.28]{janson_gaussian_1997} it holds
    \begin{align}
    \label{eq: variance_representation}
        &\delta^{4(\alpha_i+\alpha_j-\alpha_1-\beta_1)}\operatorname{Var}\left(\int_0^Tu_{\delta,x}^{\Delta_i} (t)u_{\delta,x}^{\Delta_j}(t)\diff t\right)=\delta^{4(\alpha_1+\alpha_j-\alpha_1-\beta_1)}(V_1+V_2)
    \end{align}
    with
    \begin{align*}
        V_1&\coloneqq V((-\Delta)^{\alpha_i}K_{\delta,x},(-\Delta)^{\alpha_i}K_{\delta,x},(-\Delta)^{\alpha_j}K_{\delta,x},(-\Delta)^{\alpha_j}K_{\delta,x}),\\
        V_2&\coloneqq V((-\Delta)^{\alpha_i}K_{\delta,x},(-\Delta)^{\alpha_j}K_{\delta,x},(-\Delta)^{\alpha_j}K_{\delta,x},(-\Delta)^{\alpha_i}K_{\delta,x}),
    \end{align*}
    and
    $$V(w,w',z,z')\coloneqq\int_0^T\int_0^T\operatorname{Cov}(\sca{u(t)}{w},\sca{u(s)}{w'})\operatorname{Cov}(\sca{u(t)}{z},\sca{u(s)}{z'})\diff s\diff t,$$
    for $w,w',z,z'\in L^2(\Lambda)$. 
    By \Cref{lemma:covariance_structure} and rescaling, we obtain the representation
    \begin{align}
        &\delta^{4(\alpha_i+\alpha_j-\alpha_1-\beta_1)}V_1 \nonumber\\
        &= \int_{0}^{T}\int_{0}^{T}\mathrm{Cov}(u_{\delta, x}^{\Delta_i}(t),u_{\delta,x}^{\Delta_i}(s))\mathrm{Cov}(u_{\delta,x}^{\Delta_j}(t),u_{\delta,x}^{\Delta_j}(s)) \mathrm{d}s \mathrm{d}t \label{eq: pure_cov_rep}\\
    &=2\delta^{2\beta_1}\int_0^T\int_0^{t\delta^{-2\beta_1}}\left(\int_0^{t\delta^{-2\beta_1}-s}f_{i,i}(s,s')\diff s'\right) \left(\int_0^{t\delta^{-2\beta_1}-s}f_{j,j}(s,s'')\diff s''\right)\diff s\diff t, \label{eq: f_ij representation}
    \end{align}
    where, for $s,s'\in [0, T\delta^{-2\beta_1}]$, we have set
    \begin{align*}
        f_{i,j}(s,s')&\coloneqq\langle{e^{(s+s')B_{\eta,\delta,x}/2}\sin(\delta^{-\alpha_1+2\beta_1}(s+s')L_{\theta,\eta,\delta,x}^{1/2})L_{\theta,\eta,\delta,x}^{-1/2}(-\Delta_{\delta,x})^{\alpha_i}K},\\
        &\hspace{4mm}{e^{s'B_{\eta,\delta,x}/2}\sin(\delta^{-\alpha_1+2\beta_1}(s')L_{\theta,\eta,\delta,x}^{1/2})L_{\theta,\eta,\delta,x}^{-1/2}(-\Delta_{\delta,x})^{\alpha_j}K}\rangle_{L^2(\Lambda_{\delta,x})}.
    \end{align*}
    In case that $\beta_1=0$, we use the pointwise convergences $f_{i,i}(s,s')\rightarrow 0$ and $f_{j,j}(s,s'')\rightarrow0,$ 
    given by the slow-fast orthogonality in \Cref{lem: scaling limit}(ii), and dominated convergence over fixed finite time intervals to prove the claim directly from the representation \eqref{eq: pure_cov_rep}. If, however, $\beta_1>0$, we use \Cref{ass: mainass2} (ii), i.e. $K=\Delta_0^{\lceil\alpha_1\rceil}\tilde K$, and \Cref{lem: semigroupbound} such that 
    \begin{align*}
        \sup_{x\in\mathcal{J}}|f_{i,i}(s',s)|&\lesssim (1\wedge (s+s')^{-(\alpha_i+\alpha_1/2)/\beta_1})(1\wedge s^{-(\alpha_i+\alpha_1/2)/\beta_1})\\
        &\lesssim(1\wedge s'^{-1})(1\wedge s^{-1}).
    \end{align*}
    Thus implies $\sup_{x\in\mathcal{J}}|V_1|=O(\delta^{-4(\alpha_i+\alpha_j-\alpha_1-\beta_1)}\delta^{2\beta_1}\log(\delta^{-2\beta_1}))=o(\delta^{-4(\alpha_i+\alpha_j-\alpha_1-\beta_1)})$. The arguments for $V_2$ follow in the same way be replacing $f_{i,i}$ and $f_{j,j}$ with $f_{i,j}$ and $f_{j,i}$ in \eqref{eq: f_ij representation}, respectively.  The assertion follows in view of \eqref{eq: variance_representation}.
\end{proof} 
\begin{lemma}[{Bounds on the initial condition}]
\label{lem: initial}
    Grant \Cref{ass: mainass2} (i) (ii) and (iv). Then, for $1\leq i\leq p$, $1\leq j\leq q$, we have
    \begin{itemize}
        \item [(i)] $\sup_{x\in\mathcal{J}}\delta^{4\alpha_i-2\alpha_1-2\beta_1}\left(\int_0^T\sca{M(t)u_0+N(t)v_0}{(-\Delta)^{\alpha_i} K_{\delta,x}}^2\diff t\right)=o(1);$
        \item [(ii)] $\sup_{x\in\mathcal{J}}\delta^{4\beta_j-2\beta_1}\left(\int_0^T\sca{A_\theta N(t)u_0+(M(t)+B_\eta(N(t))v_0}{(-\Delta)^{\beta_j}K_{\delta,x}}^2\diff t\right)=o(1).$
    \end{itemize}
\end{lemma}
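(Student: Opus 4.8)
The plan is to bound, separately, each of the (at most two) summands inside the squared inner product, in both \textbf{(i)} and \textbf{(ii)}, by one and the same device. First I would transfer all operators onto the smooth kernel side, using self-adjointness of the $M,N$-functions (\Cref{lem: self_adjointness_MN}; for \textbf{(ii)} also of $M'(t)=A_\theta N(t)$ and $N'(t)=M(t)+B_\eta N(t)$, which are again functions of $-\Delta$ by \Cref{lem: generated semigroup}) together with commutativity of all the operators involved. Then I would exploit \Cref{ass: mainass2} (iv), i.e.\ $u_0\in\dot H^{2\alpha_1}(\Lambda)$ and $v_0\in\dot H^{\alpha_1}(\Lambda)$, to peel off $(-\Delta)^{\alpha_1}$ from $u_0$ and $(-\Delta)^{\alpha_1/2}$ from $v_0$; apply Cauchy--Schwarz; and pass to the rescaled picture via \Cref{lem: rescaling}, the identity $(-\Delta)^{\gamma}K_{\delta,x}=\delta^{-2\gamma}((-\Delta_{\delta,x})^{\gamma}K)_{\delta,x}$, and the isometry $z\mapsto z_{\delta,x}$. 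Each term thereby becomes $\delta^{-2\gamma}$ (with $\gamma$ the power moved onto $u_0$ or $v_0$) times the $L^2(\Lambda_{\delta,x})$-norm of a rescaled operator applied to $(-\Delta_{\delta,x})^{s}K$ with $s\le0$, and the claimed rate drops out once these two factors are controlled.

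For that I would assemble a short toolkit of bounds, uniform in $t\in[0,T]$, $x\in\mathcal J$ and small $\delta$. Since $M_{\delta,x}(t),N_{\delta,x}(t),A_{\theta,\delta,x},B_{\eta,\delta,x},L_{\theta,\eta,\delta,x}$ are all functions of the single operator $-\Delta_{\delta,x}$, whose eigenvalues are $\delta^2\lambda_k\ge\delta^2 c(\Lambda)$, the functional calculus reduces everything to scalar estimates. From the standing positivity of $L_{\theta,\eta}$ one gets the two-sided comparison $c'(-\Delta_{\delta,x})^{\alpha_1}\le L_{\theta,\eta,\delta,x}\le C'(-\Delta_{\delta,x})^{\alpha_1}$ (the eigenvalue of $L_{\theta,\eta,\delta,x}$ on the $k$-th mode is $\delta^{2\alpha_1}$ times that of $L_{\theta,\eta}$, whose ratio with $\lambda_k^{\alpha_1}$ lies in a compact subset of $(0,\infty)$), and likewise $|A_{\theta,\delta,x}|\le C(-\Delta_{\delta,x})^{\alpha_1}$, $|B_{\eta,\delta,x}|\le C(-\Delta_{\delta,x})^{\beta_1}$; the $C_1$-subtraction device from the proof of \Cref{lem: semigroupbound} gives $\norm{e^{(t/2)\delta^{-2\beta_1}B_{\eta,\delta,x}}}_{\mathrm{op}}\lesssim1$ for $t\le T$. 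Recalling $N_{\delta,x}(t)=\delta^{\alpha_1}e^{(t/2)\delta^{-2\beta_1}B_{\eta,\delta,x}}\sin(t\delta^{-\alpha_1}L_{\theta,\eta,\delta,x}^{1/2})L_{\theta,\eta,\delta,x}^{-1/2}$ and combining these, one obtains
\begin{align*}
\norm{M_{\delta,x}(t)}_{\mathrm{op}}&\lesssim1,
&\norm{N_{\delta,x}(t)(-\Delta_{\delta,x})^{\alpha_1/2}}_{\mathrm{op}}&\lesssim\delta^{\alpha_1},\\
\norm{A_{\theta,\delta,x}N_{\delta,x}(t)(-\Delta_{\delta,x})^{-\alpha_1/2}}_{\mathrm{op}}&\lesssim\delta^{\alpha_1},
&\norm{B_{\eta,\delta,x}N_{\delta,x}(t)}_{\mathrm{op}}&\lesssim\delta^{2\beta_1},
\end{align*}
where the last bound uses $\alpha_1\ge2\beta_1$, which makes $B_{\eta,\delta,x}L_{\theta,\eta,\delta,x}^{-1/2}$ bounded of norm $\lesssim\delta^{2\beta_1-\alpha_1}$. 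Finally I would record that, since $\Delta_0^{\lceil\alpha_1\rceil}\tilde K=\Delta_{\delta,x}^{\lceil\alpha_1\rceil}\tilde K$ on $\mathrm{supp}(\tilde K)\subset\bigcap_{x\in\mathcal J}\Lambda_{\delta,x}$ (for small $\delta$), \Cref{ass: mainass2} (ii) yields $(-\Delta_{\delta,x})^{s}K=(-1)^{\lceil\alpha_1\rceil}(-\Delta_{\delta,x})^{s+\lceil\alpha_1\rceil}\tilde K$, whence $\sup_{x\in\mathcal J}\norm{(-\Delta_{\delta,x})^{s}K}_{L^2(\Lambda_{\delta,x})}\lesssim1$ for $-\lceil\alpha_1\rceil\le s\le0$, the routine Sobolev bookkeeping being covered by the generous regularity of $\tilde K$ and the locality of integer powers of $-\Delta_{\delta,x}$ on $\mathrm{supp}(\tilde K)$.

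With the toolkit in hand the four summands are treated identically. For \textbf{(i)}: the transfer step gives $\sca{M(t)u_0}{(-\Delta)^{\alpha_i}K_{\delta,x}}=\sca{(-\Delta)^{\alpha_1}u_0}{M(t)(-\Delta)^{\alpha_i-\alpha_1}K_{\delta,x}}$ and $\sca{N(t)v_0}{(-\Delta)^{\alpha_i}K_{\delta,x}}=\sca{(-\Delta)^{\alpha_1/2}v_0}{N(t)(-\Delta)^{\alpha_i-\alpha_1/2}K_{\delta,x}}$; Cauchy--Schwarz, rescaling, $\norm{M_{\delta,x}(t)}_{\mathrm{op}}\lesssim1$, $\norm{(-\Delta_{\delta,x})^{\alpha_i-\alpha_1}K}\lesssim1$, and the factorisation $N_{\delta,x}(t)(-\Delta_{\delta,x})^{\alpha_i-\alpha_1/2}=[N_{\delta,x}(t)(-\Delta_{\delta,x})^{\alpha_1/2}](-\Delta_{\delta,x})^{\alpha_i-\alpha_1}$ then give, uniformly in $x\in\mathcal J$,
\begin{align*}
|\sca{M(t)u_0}{(-\Delta)^{\alpha_i}K_{\delta,x}}|&\lesssim\delta^{-2(\alpha_i-\alpha_1)},
&|\sca{N(t)v_0}{(-\Delta)^{\alpha_i}K_{\delta,x}}|&\lesssim\delta^{-2(\alpha_i-\alpha_1/2)}\delta^{\alpha_1}=\delta^{2\alpha_1-2\alpha_i}.
\end{align*}
Squaring, multiplying by $\delta^{4\alpha_i-2\alpha_1-2\beta_1}$ and integrating over $[0,T]$ turns both into $\lesssim T\delta^{2\alpha_1-2\beta_1}\to0$, the exponent being positive since $\alpha_1\ge2\beta_1$ forces $2\alpha_1-2\beta_1\ge2\beta_1\ge0$ and equals $2\alpha_1>0$ when $\beta_1=0$. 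Part \textbf{(ii)} is the same after replacing $M(t),N(t)$ by $M'(t)=A_\theta N(t)$, $N'(t)=M(t)+B_\eta N(t)$: peel $(-\Delta)^{\alpha_1}$ off $u_0$ and $(-\Delta)^{\alpha_1/2}$ off $v_0$, rescale (now also using the $A_\theta,B_\eta$-scalings of \Cref{lem: rescaling}), use the factorisations $A_{\theta,\delta,x}N_{\delta,x}(t)(-\Delta_{\delta,x})^{\beta_j-\alpha_1}=[A_{\theta,\delta,x}N_{\delta,x}(t)(-\Delta_{\delta,x})^{-\alpha_1/2}](-\Delta_{\delta,x})^{\beta_j-\alpha_1/2}$ and $B_{\eta,\delta,x}N_{\delta,x}(t)(-\Delta_{\delta,x})^{\beta_j-\alpha_1/2}=[B_{\eta,\delta,x}N_{\delta,x}(t)](-\Delta_{\delta,x})^{\beta_j-\alpha_1/2}$ with $\norm{(-\Delta_{\delta,x})^{\beta_j-\alpha_1/2}K}\lesssim1$ (note $-\alpha_1\le\beta_j-\alpha_1/2\le0$), to obtain $|\sca{A_\theta N(t)u_0}{(-\Delta)^{\beta_j}K_{\delta,x}}|\lesssim\delta^{\alpha_1-2\beta_j}$ and $|\sca{(M(t)+B_\eta N(t))v_0}{(-\Delta)^{\beta_j}K_{\delta,x}}|\lesssim\delta^{\alpha_1-2\beta_j}$; squaring, multiplying by $\delta^{4\beta_j-2\beta_1}$ and integrating again yields $\lesssim T\delta^{2\alpha_1-2\beta_1}\to0$.

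The main obstacle is the toolkit of the second step — in particular that $A_{\theta,\delta,x}N_{\delta,x}(t)(-\Delta_{\delta,x})^{-\alpha_1/2}$ and $B_{\eta,\delta,x}N_{\delta,x}(t)$ are bounded with the stated $\delta$-uniform norms. This is exactly where \Cref{ass: parameterass} enters: the sign conditions and the positivity of $L_{\theta,\eta}$ give the two-sided comparison of $L_{\theta,\eta,\delta,x}$ with $(-\Delta_{\delta,x})^{\alpha_1}$, while $\alpha_1\ge2\beta_1$ is what makes $B_{\eta,\delta,x}L_{\theta,\eta,\delta,x}^{-1/2}$ bounded and hence $N_{\delta,x}(t)$ effectively of order $\delta^{\alpha_1}$ on smooth data; the semigroup $e^{(t/2)\delta^{-2\beta_1}B_{\eta,\delta,x}}$ on the expanding range $[0,T\delta^{-2\beta_1}]$ is handled by reusing the quasi-contractivity argument from the proof of \Cref{lem: semigroupbound}. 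In contrast to the covariance analysis of \Cref{lem: convFisher}, no smoothing in time is needed here: the extra regularity of the initial condition in \Cref{ass: mainass2} (iv) contributes the decisive negative powers $\delta^{-2(\alpha_i-\alpha_1)}$, $\delta^{-2(\alpha_i-\alpha_1/2)}$ and $\delta^{\alpha_1-2\beta_j}$, and the crude operator-norm bounds already outweigh the prefactors $\delta^{4\alpha_i-2\alpha_1-2\beta_1}$ and $\delta^{4\beta_j-2\beta_1}$.
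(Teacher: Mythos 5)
Your proposal is correct and follows essentially the same skeleton as the paper's proof: transfer the operators onto the kernel side by self-adjointness and commutativity, peel $(-\Delta)^{\alpha_1}$ off $u_0$ and $(-\Delta)^{\alpha_1/2}$ off $v_0$ using \Cref{ass: mainass2}~(iv), apply Cauchy--Schwarz, pass to the rescaled picture via \Cref{lem: rescaling}, and exploit $K=\Delta_0^{\lceil\alpha_1\rceil}\tilde K$ to control negative fractional powers of $-\Delta_{\delta,x}$ applied to $K$ uniformly. The one substantive difference is how the $t$-integral over $[0,T]$ is handled: the paper first substitutes $s=t\delta^{-2\beta_1}$, invokes the time-decay bound of \Cref{lem: semigroupbound}, and then discards the decay by bounding $\int_0^{T\delta^{-2\beta_1}}(1\wedge s^{-\gamma})\diff s\le T\delta^{-2\beta_1}$; you instead assemble a small toolkit of operator-norm bounds that are \emph{uniform} in $t\in[0,T]$ (the quasi-contractivity device $\|e^{(t/2)\delta^{-2\beta_1}B_{\eta,\delta,x}}\|\le e^{C_1T/2}$, the two-sided comparison of $L_{\theta,\eta,\delta,x}$ with $(-\Delta_{\delta,x})^{\alpha_1}$, and $\|B_{\eta,\delta,x}L_{\theta,\eta,\delta,x}^{-1/2}\|\lesssim\delta^{2\beta_1-\alpha_1}$ from $\alpha_1\ge2\beta_1$) and simply integrate the constant in $t$ over $[0,T]$. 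Both routes land on the same rate $O(\delta^{2(\alpha_1-\beta_1)})$; your observation that no temporal smoothing is actually needed here — in contrast to \Cref{lem: convFisher} — is valid and makes the argument marginally lighter than the paper's, which calls \Cref{lem: semigroupbound} only to throw away the decay at the last step.
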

\begin{proof}[Proof of \Cref{lem: initial}]
    \begin{itemize}
        \item []
        \item [(i)] Define the reverse scaling operation for $z\in L^2(\R^d)$ via
        $$z_{(\delta,x)^{-1}}(y)\coloneqq\delta^{d/2}z(x+\delta y),\quad y\in\R^d.$$
        The rescaling \Cref{lem: rescaling}, self-adjointness and the commutativity of operators imply
        \begin{align*}
            &\sca{M(t)u_0}{(-\Delta)^{\alpha_i} K_{\delta,x}}^2\\&\qquad=\delta^{-4\alpha_i}\sca{(u_0)_{(\delta,x)^{-1}}}{M_{\delta,x}(t)(-\Delta_{\delta,x})^{\alpha_i} K}_{L^2(\Lambda_{\delta,x})}^2\\
            &\qquad=\delta^{-4\alpha_i+4\alpha_1}\sca{((-\Delta)^{\alpha_1} u_0)_{(\delta,x)^{-1}}}{M_{\delta,x}(t) (-\Delta_{\delta,x})^{\alpha_i-\alpha_1}K}_{L^2(\Lambda_{\delta,x})}^2\\
            &\qquad\lesssim \delta^{-4\alpha_i+4\alpha_1}\norm{(-\Delta)^{\alpha_1} u_0}^2_{L^2(\Lambda)}\norm{e^{t\delta^{-2\beta_1}B_{\eta,\delta,x}} (-\Delta_{\delta,x})^{\alpha_i-\alpha_1}K}^2_{L^2(\Lambda_{\delta,x})}\\
            &\qquad\lesssim\delta^{-4\alpha_i+4\alpha_1}\norm{e^{t\delta^{-2\beta_1}B_{\eta,\delta,x}} (-\Delta_{\delta,x})^{\alpha_i-\alpha_1}K}^2_{L^2(\Lambda_{\delta,x})}.
        \end{align*}
        Thus, using \Cref{lem: semigroupbound} and that $K=\Delta_0^{\lceil \alpha_1\rceil}\tilde K,$ we obtain the upper bound
        \begin{align*}
            &\sup_{x\in\mathcal{J}}\delta^{4\alpha_i-2\alpha_1-2\beta_1}\int_0^T\sca{M(t)u_0}{(-\Delta)^{\alpha_i} K_{\delta,x}}^2\diff t\\
            &\lesssim \delta^{2\alpha_1}\int_0^{T\delta^{-2\beta_1}}\sup_{x\in\mathcal{J}}\norm{e^{tB_{\eta,\delta,x}} (-\Delta_{\delta,x})^{\alpha_i-\alpha_1}K}^2_{L^2(\Lambda_{\delta,x})}\diff t\\
            &\lesssim\delta^{2\alpha_1}\int_0^{T\delta^{-2\beta_1}}1\wedge t^{-\alpha_i/\beta_1}\diff t\\
            &=O(\delta^{2(\alpha_1-\beta_1)})=o(1).
        \end{align*}
        Similarly,
        \begin{align*}
            &\sca{N(t)v_0}{(-\Delta)^{\alpha_i} K_{\delta,x}}^2\\&\qquad=\delta^{-4\alpha_i}\sca{(v_0)_{(\delta,x)^{-1}}}{N_{\delta,x}(t)(-\Delta_{\delta,x})^{\alpha_i} K}_{L^2(\Lambda_{\delta,x})}^2\\
            &\qquad=\delta^{-4\alpha_i+2\alpha_1}\sca{((-\Delta)^{\alpha_1/2}v_0)_{(\delta,x)^{-1}}}{N_{\delta,x}(t)(-\Delta_{\delta,x})^{\alpha_i-\alpha_1/2}\Delta K}_{L^2(\Lambda_{\delta,x})}^2\\
            &\qquad\lesssim \delta^{-4\alpha_i+4\alpha_1}\norm{e^{t\delta^{-2\beta_1}B_{\eta,\delta,x}}L_{\theta,\eta,\delta,x}^{-1/2}(-\Delta_{\delta,x})^{\alpha_i-\alpha_1/2} K}^2_{L^2(\Lambda_{\delta,x})}
        \end{align*}
        Hence, 
        \begin{align*}
            &\sup_{x\in\mathcal{J}}\delta^{4\alpha_i-2\alpha_1-2\beta_1}\int_0^T\sca{N(t)v_0}{\Delta K_{\delta,x}}^2\diff t\\
            &\lesssim\delta^{2\alpha_1}\int_0^{T\delta^{-2\beta_1}}\sup_{x\in\mathcal{J}}\norm{e^{tB_{\eta,\delta,x}}L_{\theta,\eta,\delta,x}^{-1/2}(-\Delta_{\delta,x})^{\alpha_i-\alpha_1/2} K}^2_{L^2(\Lambda_{\delta,x})}\diff t\\
            &=O(\delta^{2(\alpha_1-\beta_1)})=o(1),
        \end{align*}
        proving the assertion.
        \item [(ii)] The steps from (i) can be repeated, resulting in 
        \begin{align*}
            &\sup_{x\in\mathcal{J}}\delta^{4\beta_j-2\beta_1}\left(\int_0^T\sca{A_\theta N(t)u_0+(M(t)+B_\eta(N(t))v_0}{(-\Delta)^{\beta_j}K_{\delta,x}}^2\diff t\right)\\
            &\qquad=O(\delta^{2(\alpha_1-\beta_1)})=o(1).\qedhere
        \end{align*}
    \end{itemize}
\end{proof}
\subsection{Proof of the CLT}
\begin{proof}[Proof of \Cref{result: convergence_joint_estimator2}]
    \begin{enumerate}[label=(\roman*)]
    \item[] 
    \item Assume first that $(u_0,v_0)^\top=(0,0)^\top.$ For any $1\leq i,j\leq p+q$, we obtain from \Cref{lem: convFisher} and \Cref{lem: convVar} that 
    \begin{align*}
        (\rho_\delta\mathcal{I}_\delta\rho_\delta)_{ij}&=\rho_{ii}\rho_{jj}\sum_{k=1}^N\int_0^T(Y_{ \delta, k}(t))_i(Y_{\delta,k}(t))_j\diff t\\
        &=(\Sigma_{\theta,\eta})_{ij}+o_{\PP}(1).
    \end{align*}
    This yields for zero initial conditions the convergence 
    \begin{equation}
        \label{eq: conv FisherProb}
        (\rho_\delta{\mathcal{I}}_\delta\rho_\delta)\stackrel{\PP}{\rightarrow}\Sigma_\theta,\quad\delta\rightarrow0.
    \end{equation}
    In order to extend this result to a general initial condition $(u_0,v_0)^\top$ satisfying \Cref{ass: mainass2} (iii), let $(\bar{u}(t),\bar{v}(t))^\top$ be defined as $(u(t),v(t))^\top,$ but starting in $(0,0)^\top$ such that for $z\in L^2(\Lambda)$
    \begin{align*}
        \sca{u(t)}{z}&=\sca{\bar{u}(t)}{z}+\sca{M(t)u_0}{z}+\sca{N(t)v_0}{z},\\
        \sca{v(t)}{z}&=\sca{\bar{v}(t)}{z}+\sca{A_\theta N(t)u_0}{z}+\sca{(M(t)+B_\eta N(t))v_0}{z}.
    \end{align*} 
    If $\bar{\mathcal{I}}_\delta$ corresponds to the observed Fisher information with zero initial condition, then by the Cauchy-Schwarz inequality
    $$\left|(\rho_\delta\mathcal{I}_\delta\rho_\delta)_{ij}-(\rho_\delta\bar{\mathcal{I}}_\delta\rho_\delta)_{ij}\right|\lesssim(\rho_\delta\bar{\mathcal{I}}_\delta\rho_\delta)_{ii}^{1/2}w_j^{1/2}+(\rho_\delta\bar{\mathcal{I}}_\delta\rho_\delta)_{jj}^{1/2}w_i^{1/2}+w_i^{1/2}w_j^{1/2},$$
    for all $1\leq i,j\leq p+q,$ where 
    \begin{equation*}
        w_i=\begin{cases}
            \sup_{x\in\mathcal{J}}N\rho_{ii}^2\left(\int_0^T\sca{M(t)u_0+N(t)v_0}{(-\Delta)^{\alpha_i} K_{\delta,x}}^2\diff t\right),\quad 1\leq i\leq p,\\
            \sup_{x\in\mathcal{J}}N\rho_{ii}^2\left(\int_0^T\sca{A_\theta N(t)u_0+(M(t)+B_\eta(N(t))v_0}{(-\Delta)^{\beta_i} K_{\delta,x}}^2\diff t\right),\quad \text{else}.
        \end{cases}
    \end{equation*}
    By the first part, $(\rho_\delta\bar{\mathcal{I}}_\delta\rho_\delta)_{ii}$ is bounded in probability and \Cref{lem: initial} shows $w_i=o(1)$. Hence, we obtain \eqref{eq: conv FisherProb} also in the case of non-zero initial conditions.
    
    Due to \Cref{ass: mainass2} (i)-(iii), $\Sigma_{\theta,\eta}$ is well-defined as all entries are finite. Regarding invertibility, note first that $\Sigma_{\theta,\eta}$ is invertible if and only if both $\Sigma_{1,\theta,\eta}$ and $\Sigma_{2,\theta,\eta}$ are invertible. We only argue that $\Sigma_{1,\theta,\eta}$ is invertible as the argument for $\Sigma_{2, \theta,\eta}$ is identical.
    Let $\lambda\in\R^p$ such that 
    \begin{align*}
        0=\sum_{i,j=1}^p\lambda_i\lambda_j(\Sigma_{1,\theta,\eta})_{ij}\iff 0=\sum_{i,j=1}^p\lambda_i\lambda_j\norm{(-\Delta)^{(\alpha_i+\alpha_j-\alpha_1-\beta_1)}K}^2_{L^2(\R^d)}.
    \end{align*}
    Now,
    \begin{align*}
        0&=\sum_{i,j=1}^p\lambda_i\lambda_j\norm{(-\Delta)^{(\alpha_i+\alpha_j-\alpha_1-\beta_1)}K}^2_{L^2(\R^d)}\\
        &=\sca{\sum_{i=1}^p\lambda_i(-\Delta)^{\alpha_i-(\alpha_1+\beta_1)/2}K}{\sum_{i=1}^p\lambda_i(-\Delta)^{\alpha_i-(\alpha_1+\beta_1)/2}K}_{L^2(\R^d)},
    \end{align*}
    hence $\sum_{i=1}^p\lambda_i(-\Delta)^{\alpha_i-(\alpha_1+\beta_1)/2}K=0.$ Since  the functions $(-\Delta)^{\alpha_i-(\alpha_1+\beta_1)/2}K$ ,$1\leq i\leq p,$ are linearly independent by \Cref{ass: mainass2} (iii), $\Sigma_{1,\theta,\eta}$ is invertible.
        \item We refer to \cite[Theorem 2.3]{altmeyer_anisotrop2021} for a detailed proof of the CLT in the case of the perturbed stochastic heat equation, which relies on a general multivariate martingale central limit theorem. All steps translate directly into our setting due to the stochastic convergence $\rho_\delta\mathcal{I}_\delta\rho_\delta\stackrel{\PP}{\rightarrow}\Sigma_{\theta,\eta}$ from (i).
    \end{enumerate}
\end{proof}
\paragraph*{Acknowledgement}
AT gratefully acknowledges the financial support of Carlsberg Foundation Young Researcher Fellowship grant CF20-0604. The research of EZ has been partially funded by Deutsche Forschungsgemeinschaft (DFG)—SFB1294/1-318763901.
\newpage
\bibliography{references}
\bibliographystyle{apalike2}
\end{document}